\providecommand{\U}[1]{\protect\rule{.1in}{.1in}}
\newtheorem{theorem}{Theorem}[section]
\newtheorem{conjecture}[theorem]{Conjecture}
\newtheorem{corollary}[theorem]{Corollary}
\newtheorem{definition}{Definition}
\newtheorem{example}[theorem]{Example}
\newtheorem{lemma}[theorem]{Lemma}
\newtheorem{proposition}[theorem]{Proposition}
\newtheorem{remark}[theorem]{Remark}
\newenvironment{proof}[1][Proof]{\noindent\textbf{#1.} }{\ \rule{0.5em}{0.5em}}
\begin{document}

\title{Application of vertex algebras to the structure theory of certain
representations over the Virasoro algebra}
\author{Gordan Radobolja\\Faculty of Natural sciences and Mathematics,\\University of Split, Croatia}
\maketitle

\begin{abstract}
In this paper we discuss the structure of the tensor product $V_{\alpha,\beta
}^{\prime}\otimes L(c,h)$ of irreducible module from intermediate series and
irreducible highest weight module over the Virasoro algebra. We generalize
Zhang's irreducibility criterion from \cite{Zhang}, and show that
irreducibility depends on the existence of integral roots of a certain
polynomial, induced by a singular vector in the Verma module $V(c,h)$. A new
type of irreducible $\operatorname*{Vir}$-module with infinite-dimensional
weight subspaces is found. We show how the existence of intertwining operator
for modules over vertex operator algebra yields reducibility of $V_{\alpha
,\beta}^{\prime}\otimes L(c,h)$ which is a completely new point of view to
this problem. As an example, the complete structure of the tensor product with
minimal models $c=-22/5$ and $c=1/2$ is presented.\medskip

\textbf{Keywords}: Virasoro algebra, highest weight module, intermediate
series, minimal model, vertex operator algebra, intertwining operator

\textbf{AMS classification}: 17B10, 17B65, 1768, 1769

\end{abstract}

\section{Introduction}

\label{intro}

Along with affine Kac-Moody algebras, the Virasoro algebra plays an important
role in the theory of infinite-dimensional Lie algebras. Its irreducible
weight modules with finite-dimensional weight spaces were classified in
\cite{Mathieu} - every such module is either highest (or lowest) weight
module, or module belonging to intermediate series. It has been shown in
\cite{Mazorchuk-Zhao1} that irreducible weight module cannot have
finite-dimensional and infinite-dimensional weight subspaces simultaneously.
Irreducible modules with infinite-dimensional weight subspaces have been
studied recently by several authors. The earliest examples were constructed by
H.\ Zhang in \cite{Zhang}. Later, many new classes were given in
\cite{Conley-Martin} and \cite{Liu-Lu-Zhao}. All these modules are included in
a huge family or irreducible weight modules constructed in \cite{Lu-Zhao}.
Also, various families of nonweight irreducible modules were studied at the
same time. Most examples are various versions of Whittaker modules, and are
described in a uniform way in \cite{Mazorchuk-Zhao2}.

Modules with infinite-dimensional weight subspaces over some other Lie
algebras were studied motivated by their connection with the theory of vertex
operator algebras (VOAs) and fusion rules in conformal field theory. Such
modules over affine Lie algebras were constructed in \cite{Chari-Pressley},
while their relation to VOA theory was discussed in \cite{Adamovic1},
\cite{Adamovic2}, and \cite{Adamovic3}.

One series of such representations over the Virasoro algebra is the tensor
product $V_{\alpha,\beta}^{\prime}\otimes L(c,h)$ of irreducible module
$V_{\alpha,\beta}^{\prime}$ from intermediate series and irreducible highest
weight module $L(c,h)$ introduced in \cite{Zhang}. However, this module is not
always irreducible. Zhang has given irreducibility criterion under the
condition $\alpha\notin\beta\mathbb{Z}+\mathbb{Z}$, but this rules out many
interesting examples appearing in the theory of VOA and intertwining operators.

In this paper we generalize Zhang's criterion (Theorem \ref{main}) and use it
to obtain new series of irreducible modules. Basically, $V_{\alpha,\beta
}^{\prime}\otimes L(c,h)$ is irreducible if and only if the equations
$x(v_{n+1}\otimes v)=v_{n}\otimes v$ have solutions in the universal
enveloping algebra $U(\operatorname*{Vir})$ for every $n\in\mathbb{Z}$. Such a
solution can exists only if there is a singular vector in the Verma module
$V(c,h)$. We show that irreducibility of tensor product module depends on
(non)existence of integral roots of a certain polynomial induced by that
singular vector (Theorem \ref{poly}).

We also discuss the structure of a reducible module $V_{0,\beta}^{\prime
}\otimes L(c,0)$ (Theorem \ref{n}) and present a new type of irreducible
$\operatorname*{Vir}$-module with infinite-dimensional weight subspaces
(Theorem \ref{m}).

In section \ref{sec} we show strong connection between reducibility of
$V_{\alpha,\beta}^{\prime}\otimes L(c,h)$ and the existence of certain
intertwining operators for vertex operator algebra $L(c,0)$. In subsection
\ref{min} we focus on the so called minimal models. $L(c_{p,q},0)$ is a
rational VOA, and all of its irreducible modules are known, together with
fusion rules for intertwining operators between them. Using these fusion rules
we prove reducibility of certain modules $V_{\alpha,\beta}^{\prime}\otimes
L(c,h)$. Combining these two methods, i.e.\ fusion rules to show reducibility
and singular vectors to prove irreducibility, we give complete results for
$V_{\alpha,\beta}^{\prime}\otimes L(c,h)$ when $L(c,h)$ is a minimal model for
$c=c_{2,5}=-22/5$, and $c=c_{3,4}=1/2$. When these tensor product modules are
reducible, we obtain irreducible minimal models as quotient modules
(Propositions \ref{p1}-\ref{p3}). Based on these examples we conjecture that
analogous results hold in general, for all minimal models. In subsection
\ref{c=1} we focus on $c=1$ and demonstrate that reducibility of
$V_{\alpha,\beta}^{\prime}\otimes L(1,h)$ again coincides with the existence
of known intertwining operators for $h=m^{2}$ and $h=m^{2}/4$.

\textbf{Remark }The irreducibility problem for the tensor product has been
investigated in \cite{Chen-Guo-Zhao} at the same time as in this paper. Making
use of a "shifting technique", these authors have also shown that reducibility
coincides with the existence of integral roots of associated polynomial. Their
elegant proof works for minimal models as well as in general case. However,
our approach using fusion rules and VOA gives a better understanding of a
subquotient structure.

\section{Preliminaries}

The Virasoro algebra $\operatorname*{Vir}$ is a complex Lie algebra spanned by
$\{C,L_{i}:i\in\mathbb{Z}\}$ where $C$ is a central element, and $[L_{n}%
,L_{m}]=\left(  n-m\right)  L_{m+n}+\delta_{m,-n}\frac{n^{3}-n}{12}C$. It has
a natural triangular decomposition
\[
\operatorname*{Vir}\nolimits_{+}=\bigoplus\limits_{n>0}\mathbb{C}L_{n}%
\quad\operatorname*{Vir}\nolimits_{-}=\bigoplus\limits_{n>0}\mathbb{C}%
L_{-n}\quad\operatorname*{Vir}\nolimits_{0}=\mathbb{C}L_{0}\oplus\mathbb{C}C
\]
Let $U(\operatorname*{Vir})$ and $U(\operatorname*{Vir}_{-})$ denote the
universal enveloping algebras of $\operatorname*{Vir}$ and
$\operatorname*{Vir}_{-}$, respectively. The vectors $L_{i_{1}}\cdots
L_{i_{n}}\mathbf{1}$, $i_{1}\leq\cdots\leq i_{n}$, $n\in\mathbb{N}_{0}$ form a
standard PBW\ basis of $U(\operatorname*{Vir})$. If we let $i_{n}<0$, we
obtain a basis for $U(\operatorname*{Vir}_{-})$.

In the following we recall the well known facts from representation theory of
the Virasoro algebra.

Verma module $V(c,h)$ with highest weight $h$ and central charge $c$ is a
$\operatorname*{Vir}$-module generated by the so called highest weight vector
$v$ such that $Cv=cv$, $L_{0}v=hv$ and $L_{n}v=0$ for $n>0$. It is a free
$U(\operatorname*{Vir}_{-})$-module with PBW\ basis $\{L_{-i_{n}}\cdots
L_{-i_{1}}v:i_{n}\geq\cdots\geq i_{1}>0,n\in\mathbb{N}_{0}\mathbb{\}}$.
Nonzero vector $u\in V(c,h)$ is called a singular vector if
$\operatorname*{Vir}_{+}u=0$ and $L_{0}u=(h+n)u$ for some $n\in\mathbb{N}_{0}%
$. Each singular vector generates a submodule of $V(c,h)$, and every proper
nontrivial submodule contains at least one singular vector. Every Verma module
contains proper maximal (possibly trivial) submodule $J(c,0)$, and the
quotient $L(c,h)=V(c,h)/J(c,h)$ is a unique irreducible highest weight module
with the highest weight $(c,h)$. Each highest weight module is a quotient of
the corresponding Verma module. If $J(c,h)$ is nontrivial, it is generated
either by one or by two singular vectors of different weights. In this paper,
we use the notation $c_{p,q}=1-6\frac{(p-q)^{2}}{pq}$ for $p,q>1$ relatively
prime. $J(c,h)$ is generated by two singular vectors only when $c=c_{p,q}$. We
say that $V(c,h)$ is reducible with degree $m$ if $m$ is the smallest positive
integer such that $J(c,h)$ contains a singular vector of weight $h+m$.

Let $\alpha,\beta\in\mathbb{C}$. $\operatorname*{Vir}$-module from
intermediate series is defined by $V_{\alpha,\beta}=\sum_{m\in\mathbb{Z}%
}\mathbb{C}v_{m}$ with $L_{n}v_{m}=-(m+\alpha+\beta+n\beta)v_{m+n}$ and
$Cv_{m}=0$. Module $V_{\alpha,\beta}$ is reducible if and only if $\alpha
\in\mathbb{Z}$ and $\beta=0$, or $\beta=1$. Since $V_{\alpha,\beta}\cong
V_{\alpha+k,\beta}$ for all $k\in\mathbb{Z}$, we may assume $\alpha=0$ if
$\alpha\in\mathbb{Z}$. Define $V_{0,0}^{\prime}:=V_{0,0}/\mathbb{C}v_{0}$,
$V_{0,1}^{\prime}:=\sum_{m\neq-1}\mathbb{C}v_{m}$, and $V_{\alpha,\beta
}^{\prime}:=V_{\alpha,\beta}$ for all other pairs $(\alpha,\beta)$. Then
$V_{\alpha,\beta}^{\prime}$ are all irreducible modules belonging to the
intermediate series.

\bigskip

We define a module structure on tensor products $V_{\alpha,\beta}^{\prime
}\otimes L(c,h)$ by
\[
L_{n}(v_{k}\otimes x)=(L_{n}v_{k})\otimes x+v_{k}\otimes(L_{n}x).
\]
It is easy to see that
\[
\left(  V_{\alpha,\beta}^{\prime}\otimes L\left(  c,h\right)  \right)
_{h-\alpha-\beta+m}=\bigoplus\limits_{n\in\mathbb{Z}_{+}}\mathbb{C}%
v_{n-m}\otimes L\left(  c,h\right)  _{h+n}
\]
so $V_{\alpha,\beta}^{\prime}\otimes L(c,h)$ has infinite-dimensional weight
subspaces. Also, $V_{\alpha,\beta}^{\prime}\otimes L(c,h)$ is generated by
$\{v_{m}\otimes v:m\in\mathbb{Z}\}$ where $v$ is the highest weight vector in
$L(c,h)$.

\section{Irreducibility of modules $V_{\alpha,\beta}^{\prime}\otimes L(c,h)$}

The following irreducibility criterion was proved in \cite{Zhang}:

\begin{theorem}
[\cite{Zhang}]If $\alpha\notin\beta\mathbb{Z}+\mathbb{Z}$, then $V_{\alpha
,\beta}^{\prime}\otimes L(c,h)$ is irreducible if and only if $V_{\alpha
,\beta}^{\prime}\otimes L(c,h)$ is cyclic on every $v_{m}\otimes v$,
$m\in\mathbb{Z}$, where $v$ is the highest weight vector of $L(c,h).$
\end{theorem}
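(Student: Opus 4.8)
The plan is as follows. The forward implication is immediate: an irreducible module is cyclic on each of its nonzero vectors, and $v_m\otimes v\neq0$ since $v_m\neq0$ in $V'_{\alpha,\beta}$ and $v\neq0$ in $L(c,h)$. So I concentrate on the converse: assuming $M:=V'_{\alpha,\beta}\otimes L(c,h)$ is cyclic on every $v_m\otimes v$, I want $M$ to be irreducible. It is enough to show that every nonzero submodule $W$ contains some $v_m\otimes v$, for then $W\supseteq U(\operatorname*{Vir})(v_m\otimes v)=M$. Since $L_0$ acts diagonalizably, $W$ is a weight submodule and has a nonzero weight vector; I will call $\mathbb{C}v_{n-m}\otimes L(c,h)_{h+n}$ the ``$n$-th layer'' of the weight space $M_{h-\alpha-\beta+m}$, and write $N(w)$ for the top layer occurring in a weight vector $w$.

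First I would normalize the bottom layer. Given $0\neq w\in W$, $w=\sum_{n=p}^{N}v_{n-m}\otimes x_n$ with $x_p\neq0$, I use that $L(c,h)$, being irreducible, has no singular vector of weight $>h$, so I can build by induction on $p$ a homogeneous $u\in U(\operatorname*{Vir}_{+})$ of degree $p$ with $ux_p$ a nonzero multiple of the highest weight vector $v$ (at each stage $L_kx\neq0$ for a suitable $k>0$, necessarily with $k\le p$). Expanding $uw$ via $\Delta(L_i)=L_i\otimes1+1\otimes L_i$, one checks that $uw\in W$ has no layer below the $0$-th and that its $0$-th layer is exactly $v_{p-m}\otimes(ux_p)\neq0$ (only the summand in which all of $u$ acts on the $L(c,h)$-factor of $v_{p-m}\otimes x_p$ contributes there). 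Replacing $w$ by $uw$ and rescaling, I may thus assume $w=v_t\otimes v+\sum_{n=1}^{N}v_{n+t}\otimes y_n$ with $y_N\neq0$ (or $N=0$).

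Now choose such a $w$ with $N$ minimal among all nonzero weight vectors of $W$; the goal is to force $N=0$, which gives $v_t\otimes v\in W$ and finishes the proof by cyclicity. This is where $\alpha\notin\beta\mathbb{Z}+\mathbb{Z}$ is used decisively: it makes every structure constant $-(k+\alpha+\beta+n\beta)$, $k,n\in\mathbb{Z}$, nonzero, so $L_k$ acts invertibly on each line $\mathbb{C}v_j$ and $L_k(v_j\otimes v)$ is a nonzero multiple of $v_{j+k}\otimes v$. A short computation shows that applying $L_k$ with $k\geq0$ to $w$ leaves the top layer unchanged — the term $(L_kv_{N+t})\otimes y_N$ has nonzero coefficient and nothing cancels it — so $N$ cannot be lowered by a single mode. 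For $N\geq2$ I would instead use that $\dim U(\operatorname*{Vir}_{+})_N\geq2$ to pick a degree-$N$ element $u$ with $uy_N$ a nonzero multiple of $v$ but $uv_{N+t}=0$, and with the $0$-th layer of $uw$ still nonzero; this is possible after passing to one of the infinitely many weight spaces realizing the minimal $N$ and avoiding the finitely many obstructed ones, and then $uw\in W$ is nonzero with $N(uw)\leq N-1$, contradicting minimality. Hence $N\leq1$. If $N=1$ then $L(c,h)_{h+1}\neq0$, which forces $h\neq0$ and $y_1\in\mathbb{C}L_{-1}v$, so $w=v_t\otimes v+\gamma\,v_{t+1}\otimes L_{-1}v$ with $\gamma\neq0$; here I would use the one-dimensionality of $L(c,h)_{h+1}$ to see that $\gamma=\gamma(t)$ is a well-defined function on the weight spaces reachable from $w$ by positive modes, extract the rigid recursions for $\gamma(t)$ from the vectors $L_kw$ ($k\geq1$), and play these against the cyclicity hypothesis to reach a contradiction, equivalently to produce some $v_s\otimes v\in W$. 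Once $N=0$, cyclicity on $v_t\otimes v$ gives $W=M$.

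I expect the main obstacle to lie in this last step, and within it in the case $N=1$: the top layer is rigid under the Virasoro action, so the descent cannot be carried out by a single operator; the case $N\geq2$ yields to the dimension count in $U(\operatorname*{Vir}_{+})_N$, but $N=1$ appears to genuinely require combining the one-dimensionality of $L(c,h)_{h+1}$ with the cyclicity assumption.
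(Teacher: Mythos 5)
Your forward direction is fine, and so is your normalization step: the layer-$0$ component of $uw$ indeed receives a contribution only from $v_{p-m}\otimes x_{p}$ with all of $u$ acting on the second factor, so you may assume $w=v_{t}\otimes v+\sum_{n=1}^{N}v_{n+t}\otimes y_{n}$. Likewise your key observation is the right one: for homogeneous $u\in U(\operatorname*{Vir}_{+})$ the layer-$N$ component of $uw$ is exactly $(u\,v_{N+t})\otimes y_{N}$, so lowering the top layer amounts to choosing $u$ with $u\,v_{N+t}=0$ in $V_{\alpha,\beta}^{\prime}$; this is also the engine of the paper's proof. The gap is in making such a $u$ act nontrivially on $w$. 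For $N\geq2$ you need simultaneously $u\,v_{N+t}=0$ and $uw\neq0$, and nothing you say delivers the second condition: the layer-$0$ component of $uw$ is a sum of cross terms coming from \emph{all} layers (not just $v_{N+t}\otimes u\,y_{N}$), and the space $\{u\in U(\operatorname*{Vir}_{+})_{N}:u\,v_{N+t}=0\}$ — for $N=2$ a single line — may well annihilate $w$. Your remedy, ``pass to one of the infinitely many weight spaces realizing the minimal $N$ and avoid the finitely many obstructed ones,'' is unsupported: the minimal-$N$ vectors in other weight spaces are obtained by applying positive modes and have different layer data, and no argument is given that the obstruction occurs only finitely often. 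Ruling out this simultaneous vanishing is precisely the computational core of the theorem, and it is where $\alpha\notin\beta\mathbb{Z}+\mathbb{Z}$ has to enter beyond the nonvanishing of single-mode coefficients; your $N\geq2$ step uses neither that analysis nor anything in its place.

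The paper (expanding Zhang) resolves exactly this point by \emph{not} fixing the degree of $u$: it takes two-term operators $aL_{l}+bL_{l-i}L_{i}$ of arbitrarily large degree $l$, with coefficients chosen so that the operator kills the relevant $v_{m}$, and with $i\in\{1,2\}$ chosen so that $L_{i}$ does not kill the top component. Because $l$ is large, only $L_{i}$ can touch the $L(c,h)$-factor, and if all these images vanished, a single component yields a polynomial identity in $l$ (equation (\ref{l})) valid for all large $l$, hence identically; this forces $\beta=1$, $\beta=0$ or $m+\alpha+\beta+i\beta=0$, which under $\alpha\notin\beta\mathbb{Z}+\mathbb{Z}$ either cannot occur or is then disposed of by auxiliary operators. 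Your restriction to operators of degree exactly $N$ discards the free parameter $l$ that makes this work, and it is also what makes your $N=1$ case look intractable: $U(\operatorname*{Vir}_{+})_{1}=\mathbb{C}L_{1}$ leaves no room to annihilate $v_{1+t}$, whereas with $aL_{l}+bL_{l-1}L_{1}$ the case $N=1$ is handled uniformly with all the others (it is just $i=1$ in the paper's induction), with no need for the $\gamma(t)$-recursion you sketch and no use of cyclicity before the final step. As written, the $N\geq2$ step rests on an unproved finiteness claim and the $N=1$ case is a plan rather than a proof, so the converse direction is not established.
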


Here we expand Zhang's proof and eliminate restriction on $\alpha$. (See also
Theorem 1 of \cite{Chen-Guo-Zhao} for a different proof of this theorem.)

\begin{theorem}
\label{main}Module $V_{\alpha,\beta}^{\prime}\otimes L(c,h)$ is irreducible if
and only if $V_{\alpha,\beta}^{\prime}\otimes L(c,h)$ is cyclic on every
$v_{m}\otimes v$, $m\in\mathbb{Z}$, where $v$ is the highest weight vector of
$L(c,h).$
\end{theorem}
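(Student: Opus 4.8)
The plan is to prove both directions of the equivalence, where the forward direction is essentially trivial and the real content is the converse. For the forward direction, suppose $V_{\alpha,\beta}^{\prime}\otimes L(c,h)$ is irreducible. Each $v_m\otimes v$ is a nonzero vector, so the submodule it generates must be the whole module; hence the module is cyclic on every $v_m\otimes v$, and indeed on any nonzero vector. That takes care of one implication with no work.

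The substance is the converse: assuming cyclicity on every $v_m\otimes v$, show irreducibility. First I would take an arbitrary nonzero submodule $W$ and aim to show $v_m\otimes v\in W$ for some $m$, which by hypothesis forces $W$ to be everything. The key structural fact from the Preliminaries is the weight-space decomposition $\left(V_{\alpha,\beta}^{\prime}\otimes L(c,h)\right)_{h-\alpha-\beta+m}=\bigoplus_{n\in\mathbb{Z}_+}\mathbb{C}\,v_{n-m}\otimes L(c,h)_{h+n}$, so $W$, being a submodule, is a weight module and contains some nonzero weight vector $w=\sum_{n=0}^{N} v_{n-m}\otimes u_n$ with $u_n\in L(c,h)_{h+n}$, $u_N\neq 0$. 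The natural strategy is induction on $N$: I want to ``lower'' $w$ by acting with elements of $\operatorname*{Vir}$ (primarily $L_1, L_2,\dots$ and their combinations) to kill the top term $v_{-m+N}\otimes u_N$ while not destroying everything, thereby producing a nonzero vector in $W$ supported on fewer tensor factors, until I reach a vector of the form $v_k\otimes v$ (up to scalar), which lies in the lowest piece $L(c,h)_h=\mathbb{C}v$.

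The main obstacle — and the place where the restriction $\alpha\notin\beta\mathbb{Z}+\mathbb{Z}$ was used in Zhang's original argument — is that the coefficients $-(k+\alpha+\beta+n\beta)$ appearing in $L_n v_k$ can vanish for integral combinations when $\alpha\in\beta\mathbb{Z}+\mathbb{Z}$, so the naive lowering operators may annihilate the leading term together with too much else, or may fail to be injective on the relevant finite-dimensional ``truncated'' spaces. To handle this I would work more carefully with the truncated modules: fix a weight, and for each $N$ consider the finite-dimensional space $\bigoplus_{n=0}^{N} v_{n-m}\otimes L(c,h)_{h+n}$; the operators coming from $\operatorname*{Vir}_+$ act on the associated graded in a controlled way, and I would analyze when a carefully chosen operator (or a suitable polynomial in the $L_n$'s, possibly depending on where the vanishing occurs) acts as a nonzero map on the top graded piece. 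Even when a single $L_n$ fails, one can use that $L(c,h)$ is irreducible highest weight — so $\operatorname*{Vir}_+$ acts without common kernel on any $L(c,h)_{h+n}$ with $n>0$ — to find some positive-mode element reducing the $L(c,h)$-degree; the $v$-index bookkeeping then only shifts $m$, which is harmless since all $v_m\otimes v$ are assumed cyclic generators. Pushing the induction down to $N=0$ yields a nonzero multiple of some $v_k\otimes v$ in $W$, completing the proof.

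A secondary point to be careful about is the genuinely degenerate modules $V_{0,0}^{\prime}$ and $V_{0,1}^{\prime}$, where the index set for the $v_m$'s is $\mathbb{Z}\setminus\{0\}$ or $\mathbb{Z}\setminus\{-1\}$ rather than all of $\mathbb{Z}$; I would check that the lowering steps never require a ``missing'' basis vector, or else re-route around it, so the argument still terminates at an available $v_k\otimes v$. Once that bookkeeping is settled, the theorem follows, and this is the version we will invoke repeatedly in the sequel to reduce irreducibility questions to the solvability of the equations $x(v_{n+1}\otimes v)=v_n\otimes v$ in $U(\operatorname*{Vir})$.
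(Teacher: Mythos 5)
Your overall plan coincides with the paper's: the forward direction is immediate, and for the converse you take a nonzero weight vector $x=\sum_{j}v_{m-j}\otimes x_{n-j}$ in a submodule, induct on the top degree $n$, lower by elements of $U(\operatorname*{Vir}_{+})$ chosen to annihilate the $v$-factor of the top term, and use irreducibility of $L(c,h)$ to get $L_{i}x_{n}\neq0$ for some $i\in\{1,2\}$. The genuine gap is at the one point where all the work of the theorem is concentrated: you never show that the lowered vector is nonzero. Arranging that a chosen $w$ (with $wv_{m}=0$) ``acts as a nonzero map on the top graded piece'' is not sufficient, because the image of the top term, $\left(m+\alpha+\beta+l\beta\right)L_{l-i}v_{m}\otimes L_{i}x_{n}$, lands in the \emph{same} component $\mathbb{C}v_{m+l-i}\otimes L(c,h)_{h+n-i}$ as the image $wv_{m-i}\otimes x_{n-i}$ of an unknown lower term, and these can cancel --- exactly when $L_{i}x_{n}=\lambda x_{n-i}$ for some scalar $\lambda$. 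The paper's proof is devoted to this situation: if the component vanishes for all large $l$, one gets a polynomial identity in $l$ (equation (\ref{l})), which after specializing $l$ forces $\beta=1$, $\beta=0$, or $m+\alpha+\beta+i\beta=0$, and in each case a new explicitly constructed operator ($w_{1},w_{2},w_{3},w_{4}$) is shown to produce a nonzero shortened vector. These degenerate cases are precisely the ones excluded by Zhang's hypothesis $\alpha\notin\beta\mathbb{Z}+\mathbb{Z}$, i.e. they are the entire content of the generalization being proved; your sentence about ``a suitable polynomial in the $L_{n}$'s, possibly depending on where the vanishing occurs'' names the problem but does not solve it, and the appeal to $\operatorname*{Vir}_{+}$ having no common kernel on $L(c,h)_{h+n}$ only yields $L_{i}x_{n}\neq0$, not $wx\neq0$.

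Two smaller points. First, the inductive mechanism should be stated precisely: one cannot lower the $L(c,h)$-degree of the top term directly by raising operators; one kills its $v$-factor ($wv_{m}=0$, with $l$ large so that $L_{l},L_{l-i}$ annihilate all $x_{j}$), whereupon the top term contributes only through $L_{l-i}v_{m}\otimes L_{i}x_{n}$, of strictly smaller $L(c,h)$-degree, and the induction is on that degree. Second, the degenerate modules $V_{0,0}^{\prime}$ and $V_{0,1}^{\prime}$ (missing $v_{0}$, resp.\ $v_{-1}$) are not a matter of harmless ``re-routing'': in the paper they require their own verifications (e.g.\ the cases $\alpha=0$, $\beta=1$, $m-i=-1$ and $\alpha=\beta=m-i=0$, and the separate treatment inside the $\beta=0$ case where $\alpha=-m$ is shown to be impossible). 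As it stands, your proposal is a correct outline of the known strategy, but the converse direction --- the actual theorem --- is not proved.
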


\begin{proof}
The only if part is trivial. Suppose $V=V_{\alpha,\beta}^{\prime}\otimes
L(c,h)$ is cyclic on every $v_{m}\otimes v$. Let $U$ be a nontrivial submodule
in $V$, and let $x\in U$ be a nonzero weight $n-m$ vector:
\[
x=v_{m-n}\otimes x_{0}+v_{m-n+1}\otimes x_{1}+\cdots+v_{m}\otimes x_{n}
\]
where
\[
x_{j}\in V(c,h)_{h+j},j=0,1,\ldots,n\text{ and }x_{n}\neq0.
\]
Using induction on $n$ we find $v_{k}\otimes v\in U$ which proves irreducibility.

If $n=0$, then $x=v_{m}\otimes x_{0}\in U$ is a multiple of $v_{m}\otimes v$.
Assume $n>0$. Since $L_{1}$ and $L_{2}$ generate the algebra
$\operatorname*{Vir}_{+}$ there exists $i\in\left\{  1,2\right\}  $ such that
$L_{i}x_{n}\neq0$ (otherwise, $x_{n}$ would be a highest weight vector, a
contradiction with irreducibility of $L(c,h)$). The idea is to choose $w\in
U\left(  \operatorname*{Vir}_{+}\right)  $ such that $wx\in U$ has at most $n$
components. By inductive hypothesis there is $v_{k}\otimes v\in U$ for some
$k\in\mathbb{Z}$, as long as $wx\neq0$. Then we find a component of $wx$ in
$\mathbb{C}v_{m+l-i}\otimes L(c,h)_{h+n-i}$ to check weather $wx=0$ and repeat
the process if necessary.

For any $l\in\mathbb{Z}$ such that $l>n+i$, $m+\alpha+\beta+(l-i)\beta$ and
$m+\alpha+\beta+l\beta$ are nonzero let
\[
w=\left(  m+\alpha+\beta+i\beta\right)  \left(  m+i+\alpha+\beta+\left(
l-i\right)  \beta\right)  L_{l}+\left(  m+\alpha+\beta+l\beta\right)
L_{l-i}L_{i}.
\]
It is easy to show that $wv_{m}=0$. Since $l>n+i$ we have $L_{l}x_{j}%
=L_{l-i}x_{j}=0$ for $j=0,1,\ldots,n$, so
\begin{gather}
wx=w\sum_{j=0}^{n}v_{m-j}\otimes x_{n-j}=\label{h}\\
=\sum_{j=1}^{n}wv_{m-j}\otimes x_{n-j}+\left(  m+\alpha+\beta+l\beta\right)
\sum_{j=0}^{n-i}L_{l-i}v_{m-j}\otimes L_{i}x_{n-j}.\nonumber
\end{gather}
Therefore, if $wx\neq0$ by the induction hypothesis there exists $v_{k}\otimes
v\in U$. Suppose $wx=0$. The component of $wx$ in $\mathbb{C}v_{m+l-i}\otimes
L(c,h)_{h+n-i}$ is
\[
X_{l}^{i}=wv_{m-i}\otimes x_{n-i}+\left(  m+\alpha+\beta+l\beta\right)
L_{l-i}v_{m}\otimes L_{i}x_{n},
\]
and we have
\[
\left(  m+\alpha+\beta+l\beta\right)  L_{l-i}v_{m}\otimes L_{i}x_{n}\neq0.
\]
If $x_{n-i}$ and $L_{i}x_{n}$ are linearly independent or $x_{n-i}=0$, then
$X_{l}^{i}\neq0$, so $wx\neq0$. (In case $\alpha=0$, $\beta=1$ and $m-i=-1$ we
have $X_{l}^{i}=\left(  m+l+1\right)  L_{l-i}v_{m}\otimes L_{i}x_{n}\neq0$,
and if $\alpha=\beta=m-i=0$ we get $X_{l}^{i}=mL_{l-i}v_{m}\otimes L_{i}%
x_{n}\neq0$.)

Suppose $0\neq\lambda x_{n-i}=L_{i}x_{n}$ for some $\lambda\in\mathbb{C}%
^{\ast}$. If $X_{l}^{i}=0$ we obtain
\begin{align}
&  -\left(  m+\alpha+\beta+i\beta\right)  \left(  m+i+\alpha+\beta+\left(
l-i\right)  \beta\right)  \left(  m-i+\alpha+\beta+l\beta\right) \nonumber\\
&  +\left(  m-i+\alpha+\beta+i\beta\right)  \left(  m+\alpha+\beta+\left(
l-i\right)  \beta\right)  \left(  m+\alpha+\beta+l\beta\right)  +\label{l}\\
&  -\lambda\left(  m+\alpha+\beta+\left(  l-i\right)  \beta\right)  \left(
m+\alpha+\beta+l\beta\right)  =0\nonumber
\end{align}
for every $l>n-i$. Therefore, equation (\ref{l}) holds for every
$l\in\mathbb{C}$ and, in particular for $l=-\beta^{-1}(m+\alpha+\beta)$ if
$\beta\neq0$. Then
\[
i^{2}\left(  m+\alpha+\beta+i\beta\right)  \left(  1-\beta\right)  =0.
\]
Since $i\neq0$ we get $\beta=1$ or $m+\alpha+\beta+i\beta=0$.

\begin{description}
\item[\fbox{$\beta=1$}] Suppose that $\beta=1$. Then from (\ref{l}) it
follows\footnote{In the original proof, the author makes a wrong conclusion
that $\lambda=i=m+\alpha+1.$ We expand this part of the proof.} that
$\lambda=-i$.

\item For $l>n+1+i$ and $m+\alpha+l+1\neq i$ set
\[
w_{1}=\left(  m+\alpha+i+2\right)  L_{l}+L_{l-i-1}L_{i+1}.
\]
Then $w_{1}v_{m}=0$ and
\[
w_{1}x=\sum_{j=1}^{n}w_{1}v_{m-j}\otimes x_{n-j}+\sum_{j=0}^{n-i-1}%
L_{l-i-1}v_{m-j}\otimes L_{i+1}x_{n-j}.
\]
Again, if $w_{1}x\neq0$ we can find $v_{k}\otimes v\in U$ by induction. The
component of $w_{1}x$ in $\mathbb{C}v_{m+l-i}\otimes L(c,h)_{h+n-i}$ is
\[
-i\left(  m+1+\alpha+l-i\right)  v_{m+l-i}\otimes x_{n-i}
\]
and since $x_{n-i}\neq0$ we obtain $w_{1}x\neq0$.

\item[\fbox{$m+\alpha+\beta+i\beta=0\neq\beta$}] Now assume $m+\alpha
+\beta+i\beta=0\neq\beta$ (This case is not covered in Zhang's proof due to
the condition $\alpha\notin\beta\mathbb{Z}+\mathbb{Z}$.) Then $L_{i}v_{m}=0$,
hence
\[
L_{i}x=\sum\limits_{j=1}^{n}L_{i}v_{m-j}\otimes x_{n-j}+\sum_{j=0}%
^{n-i}v_{m-j}\otimes L_{i}x_{n-j}.
\]
If $L_{i}x\neq0$ we can find $v_{k}\otimes v\in U$ as in previous cases. Let
$L_{i}x=0$. Consider the component of $L_{i}x$ in $\mathbb{C}v_{m}\otimes
L\left(  c,h\right)  _{h+n-i}$:%
\[
iv_{m}\otimes x_{n-i}+v_{m}\otimes L_{i}x_{n}=0.
\]
Therefore, $L_{i}x_{n}=-ix_{n-1}\neq0$. Set
\[
w_{2}=\left(  \left(  l-2i-1\right)  \beta+i+1\right)  L_{l}+\left(
l-i\right)  L_{l-i-1}L_{i+1}.
\]
Again, since $w_{2}v_{m}=0$ we get
\[
w_{2}x=\sum_{j=1}^{n}w_{2}v_{m-j}\otimes x_{n-j}+\left(  l-i\right)
\sum_{j=0}^{n-i-1}L_{l-i-1}v_{m-j}\otimes L_{i+1}x_{n-j}.
\]
If $w_{2}x\neq0$ our proof is done. Now suppose $w_{2}x=0$ and consider its
component in $\mathbb{C}v_{m-i}\otimes x_{n-j}$. For $l\in\mathbb{C}$ we have%
\[
-\left(  \left(  l-2i-1\right)  \beta+i+1\right)  \left(  \left(  l-i\right)
\beta-i\right)  +\left(  l-i\right)  \left(  \left(  l-2i-1\right)
\beta-1\right)  =0
\]
In particular, for $l=i$ we have
\[
i\left(  i+1\right)  \left(  1-\beta\right)  =0,
\]
so $\beta=1$. This case is already covered.

\item[\fbox{$\beta=0$}] Now let $\beta=0$. Then (\ref{l}) becomes
\[
\left(  m+\alpha\right)  \left(  i\left(  m-i+\alpha\right)  +\lambda\left(
m+\alpha\right)  \right)  =0.
\]
If $\alpha=-m\in\mathbb{Z}$ we may assume $\alpha=0$ but then $m=0,$ so
$v_{0}\in V_{0,0}^{\prime}$ which contradicts the definition of $V_{0,0}%
^{\prime}$. Therefore $\lambda=-i\frac{m+\alpha-i}{m+\alpha}$ i.e.\
\[
L_{i}x_{n}=-i\frac{m+\alpha-i}{m+\alpha}x_{n-i}.
\]

\item For $l>n+2$, $m+l+\alpha\neq0$ define
\[
w_{3}=\left(  m+i+\alpha\right)  \left(  m+l+\alpha\right)  L_{2l}%
-L_{l}L_{l-i}L_{i}.
\]
Then $w_{3}v_{m}=0$ and if $w_{3}x\neq0$, our proof is done. The component of
$w_{3}x$ in $\mathbb{C}v_{m+2l-i}\otimes L(c,h)_{h+n-i}$ is
\[
w_{3}v_{m-i}\otimes x_{n-i}-L_{l}L_{l-i}v_{m}\otimes L_{i}x_{n},
\]
so $w_{3}x=0$ yields
\[
i\left(  m-i+\alpha\right)  \left(  m+i+\alpha\right)  =0.
\]
Therefore $\alpha\in\mathbb{Z}$ and we may assume $\alpha=0$. Then $m=-i$
since $\lambda\neq0$. Let $w_{4}=L_{l}+L_{l-i-1}L_{i+1}$. Then $w_{4}v_{-i}=0
$,
\[
w_{4}x=\sum_{i=1}^{n}wv_{-i-j}\otimes x_{n-j}+\sum_{j=0}^{n-i-1}%
v_{-i-j}\otimes L_{i+1}x_{n-j}
\]
and the component of $w_{4}x$ in $\mathbb{C}v_{l-2i}\otimes L\left(
c,h\right)  _{h+n-i}$ is $2i^{2}v_{l-2i}\otimes x_{n-i}\neq0$. Once again we
conclude that $w_{4}x\neq0$ and by induction we find that $v_{k}\otimes x\in
U$.
\end{description}

This completes the proof.
\end{proof}

\bigskip

From this point on, we write $U_{n}:=U(\operatorname*{Vir})(v_{n}\otimes v)$.
We note that from the proof of the previous theorem we have

\begin{corollary}
\label{f1}Let $M$ be a nontrivial submodule in $V_{\alpha,\beta}^{\prime
}\otimes L(c,h)$. Then $M$ contains $U_{n}$ for some $n\in\mathbb{Z}$.
\end{corollary}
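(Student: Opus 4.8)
The plan is to read off Corollary \ref{f1} directly from the proof of Theorem \ref{main}, simply discarding the cyclicity hypothesis. First I would note that $V_{\alpha,\beta}^{\prime}\otimes L(c,h)$ is a weight module with respect to the semisimple action of $L_{0}$, so any nontrivial submodule $M$ is again a weight module and therefore contains a nonzero weight vector $x=\sum_{j=0}^{n}v_{m-j}\otimes x_{n-j}$ with $x_{n}\neq0$ --- exactly the shape of the vector on which the induction in the proof of Theorem \ref{main} was run.

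Next I would revisit that induction and observe that the cyclicity assumption ``$V$ is cyclic on every $v_{m}\otimes v$'' was never invoked until the very last sentence. The base case $n=0$ uses only that a one-component weight vector is a scalar multiple of $v_{m}\otimes v$; the choice of $i\in\{1,2\}$ with $L_{i}x_{n}\neq0$ uses only the irreducibility of $L(c,h)$; and each of the elements $w,w_{1},w_{2},w_{3},w_{4}\in U(\operatorname*{Vir})$ built in the various cases annihilates $v_{m}$ (resp.\ $v_{-i}$) and strictly decreases the number of tensor components, while the accompanying determinant-type identities only serve to analyse when the relevant component $X_{l}^{i}$ can vanish. Thus the induction shows, with no reference to cyclicity, that every nontrivial submodule $M$ contains a vector $v_{k}\otimes v$ for some $k\in\mathbb{Z}$.

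Finally, since $M$ is a submodule and $v_{k}\otimes v\in M$, it contains the cyclic submodule $U(\operatorname*{Vir})(v_{k}\otimes v)=U_{k}$, which gives the corollary with $n=k$. The only point requiring genuine care --- and the nearest thing to an obstacle --- is to make this dependency analysis of the earlier proof watertight: one must check that each branch of the case split ($\beta=1$; $m+\alpha+\beta+i\beta=0\neq\beta$; $\beta=0$) genuinely terminates either by producing some $v_{k}\otimes v$ in $M$ via the induction hypothesis or by a contradiction that collapses into an already-handled branch, so that at no hidden step does one secretly use $M=V$. Since every branch ends in one of these two ways, the verification is routine and the corollary follows.
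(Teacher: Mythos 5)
Your proposal is correct and is exactly how the paper obtains the corollary: the paper offers no separate argument but simply notes that the induction in the proof of Theorem \ref{main} produces some $v_{k}\otimes v$ in any nontrivial submodule without ever invoking the cyclicity hypothesis (which is used only to conclude $U_{k}=V$ at the end), so $M\supseteq U(\operatorname*{Vir})(v_{k}\otimes v)=U_{k}$. Your extra remarks (that submodules of the weight module inherit a weight-vector of the required shape, and that each branch of the case analysis terminates in $v_{k}\otimes v\in M$) are exactly the routine verifications the paper leaves implicit.
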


In order to prove irreducibility of $V_{\alpha,\beta}^{\prime}\otimes L(c,h)$
it suffices to check that $U_{n}=U_{n+1}$ for every $n\in\mathbb{Z}$. Since
\[
L_{1}(v_{n}\otimes v)=-(n+\alpha+2\beta)v_{n+1}\otimes v
\]
we get $U_{n}\supseteq U_{n+1}$ if $\alpha+2\beta\notin\mathbb{Z}$. The
opposite inclusion, however, requires a certain relation to hold in $L(c,h)$
i.e.\ the existence of a singular vector in $V(c,h).$ For example,
$V_{\alpha,\beta}^{\prime}\otimes V(c,h)$ is always reducible, even if
$V(c,h)$ is irreducible, because $U_{n}\nsubseteq U_{n+1}$ for every $n$
(Theorem 3 in \cite{Zhang}). If, on the other hand, Verma module $V(c,h)$ is
reducible with degree $m$, then $V_{\alpha,\beta}^{\prime}\otimes L(c,h)$ is
irreducible whenever $\alpha$ is either transcendental over $\mathbb{Q}%
(c,h,\beta)$, or algebraic over $\mathbb{Q}(c,h,\beta)$ with degree greater
then $m$ (Theorem 5 in \cite{Zhang}). Basically, if there is a weight $m$
singular vector in $V(c,h)$, we can prove irreducibility of $V_{\alpha,\beta
}^{\prime}\otimes L(c,h)$ for all pairs $(\alpha,\beta)\in\mathbb{C}^{2}$
except for those forming a certain algebraic curve (integral roots of a degree
$m$ polynomial). In the next section we expand this result in more details,
and prove its converse.

\section{Subquotient structure}

First we show that highest weight modules appear naturaly as subquotients in
tensor products.

\begin{lemma}
\label{kvoc}Let $M(c,h)$ be a highest weight module. If $U_{n+k}\subsetneq
U_{n}\subseteq V_{\alpha,\beta}^{\prime}\otimes M(c,h)$ for every
$k\in\mathbb{N}$, then $U_{n}/U_{n+1}$ is a highest weight module with the
highest weight $h-\alpha-\beta-n$.
\end{lemma}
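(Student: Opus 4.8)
The plan is to exhibit $v_n \otimes v + U_{n+1}$ as a highest weight vector generating the quotient $U_n/U_{n+1}$, and then to identify its weight. First I would observe that $U_n = U(\operatorname{Vir})(v_n \otimes v)$ is by definition generated by $v_n \otimes v$, so its image $\overline{v_n \otimes v}$ generates $U_n/U_{n+1}$ as a $\operatorname{Vir}$-module. Next I would check that $\overline{v_n \otimes v}$ is annihilated by $\operatorname{Vir}_+$. For $k \geq 1$ we compute
\[
L_k(v_n \otimes v) = (L_k v_n)\otimes v + v_n \otimes (L_k v) = -(n+\alpha+\beta+k\beta)\, v_{n+k}\otimes v,
\]
using $L_k v = 0$ since $v$ is the highest weight vector of $M(c,h)$. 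Since $v_{n+k}\otimes v \in U_{n+k} \subseteq U_{n+1}$ for $k \geq 1$ (as $U_{n+k}\subsetneq U_n$ and the chain is nested, so in particular $U_{n+k}\subseteq U_{n+1}$ once $k\geq 1$), we get $L_k(v_n\otimes v) \in U_{n+1}$, hence $L_k \overline{v_n \otimes v} = 0$ in $U_n/U_{n+1}$. Thus $\operatorname{Vir}_+$ kills the cyclic generator.

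Then I would compute the action of $L_0$ and $C$. From the module structure, $L_0(v_n \otimes v) = (L_0 v_n)\otimes v + v_n \otimes (L_0 v) = -(n+\alpha+\beta) v_n \otimes v + h\, v_n \otimes v = (h - \alpha - \beta - n)(v_n \otimes v)$, and $C(v_n \otimes v) = 0\cdot v_n\otimes v + c\, v_n \otimes v = c(v_n\otimes v)$, since $C$ acts as $0$ on $V_{\alpha,\beta}'$ and as $c$ on $M(c,h)$. So $\overline{v_n\otimes v}$ is an eigenvector for $\operatorname{Vir}_0$ with eigenvalue $(c, h-\alpha-\beta-n)$. Combined with the previous paragraph, $\overline{v_n\otimes v}$ is a highest weight vector of weight $(c, h-\alpha-\beta-n)$ that generates $U_n/U_{n+1}$, which is therefore a highest weight module with highest weight $h - \alpha - \beta - n$ and central charge $c$. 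The only point requiring genuine care — the "main obstacle," such as it is — is verifying that $U_n/U_{n+1}$ is nonzero, i.e.\ that $\overline{v_n\otimes v} \neq 0$; this is exactly where the hypothesis $U_{n+1} \subsetneq U_n$ is used, guaranteeing the quotient is a nonzero highest weight module rather than the zero module. The hypothesis "$U_{n+k}\subsetneq U_n$ for every $k\in\mathbb{N}$" is slightly stronger than needed for this lemma (only $k=1$ is used), but it is the natural condition under which the quotients in the filtration are all nontrivial, which is presumably how the lemma is applied later.
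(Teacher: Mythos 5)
Your proof is correct and follows essentially the same route as the paper: one checks that $L_{k}(v_{n}\otimes v)$ is a multiple of $v_{n+k}\otimes v\in U_{n+k}\subseteq U_{n+1}$ for $k\geq 1$, computes $L_{0}(v_{n}\otimes v)=(h-\alpha-\beta-n)(v_{n}\otimes v)$, and concludes that $v_{n}\otimes v+U_{n+1}$ is a cyclic highest weight vector, with strictness of the inclusion guaranteeing the quotient is nonzero. The only soft spot, the inclusion $U_{n+k}\subseteq U_{n+1}$ which you attribute to the chain being ``nested'' (something the stated hypothesis does not literally assert), is exactly the step the paper itself uses without further comment, so your argument matches the paper's proof point for point.
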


\begin{proof}
Since
\begin{align*}
L_{k}(v_{n}\otimes v)  &  \in U_{n+k}\subseteq U_{n+1},\text{\quad}%
k\in\mathbb{N}\\
L_{0}(v_{n}\otimes v)  &  =-(\alpha+\beta)v_{n}\otimes v+h(v_{n}\otimes v)
\end{align*}
we conclude that $v_{n}\otimes v+U_{n+1}$ is a cyclic highest weight vector
with the highest weight $h-\alpha-\beta-n$.
\end{proof}

The following important result generalizes the corollary following Theorem 4
in \cite{Zhang}

\begin{proposition}
\label{j}If $\alpha\notin\mathbb{Z}$, module $V_{\alpha,\beta}^{\prime}\otimes
L(c,0)$ is irreducible.
\end{proposition}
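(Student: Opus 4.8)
The plan is to use the cyclicity criterion from Theorem \ref{main}: it suffices to show that $U_n = U_{n+1}$ for every $n \in \mathbb{Z}$, where $U_n = U(\operatorname*{Vir})(v_n \otimes v)$. Since $\alpha \notin \mathbb{Z}$, for every $n$ we have $n + \alpha + 2\beta \notin \mathbb{Z}$ in the relevant sense... actually more carefully, $L_1(v_n \otimes v) = -(n+\alpha+2\beta)v_{n+1}\otimes v$, and since $L(c,0)$ has $h=0$, the highest weight vector $v$ satisfies $L_1 v = 0$, so this computation lives purely in the intermediate series factor up to the tensor. The coefficient $n+\alpha+2\beta$ vanishes only if $\alpha + 2\beta \in \mathbb{Z}$; I would first dispose of the case where this coefficient is nonzero for all $n$, giving $U_{n+1} \subseteq U_n$ immediately, and then handle the reverse inclusion, which is the substantive direction.

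For the reverse inclusion $U_n \subseteq U_{n+1}$, the key fact is that $L(c,0) = V(c,0)/J(c,0)$ and $V(c,0)$ always has a singular vector at level $1$, namely $L_{-1}v$ (since $h=0$ makes $L_1 L_{-1} v = 2 L_0 v = 0$ and $L_n L_{-1} v = 0$ for $n \geq 2$). Hence in $L(c,0)$ we have $L_{-1}v = 0$. I would exploit this: compute $L_{-1}(v_{n+1}\otimes v) = (L_{-1}v_{n+1})\otimes v + v_{n+1}\otimes L_{-1}v = -(n+1+\alpha)v_n \otimes v + 0 = -(n+1+\alpha)v_n\otimes v$. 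Since $\alpha \notin \mathbb{Z}$, the coefficient $n+1+\alpha$ is never zero, so $v_n \otimes v \in U(\operatorname*{Vir})(v_{n+1}\otimes v) = U_{n+1}$, giving $U_n \subseteq U_{n+1}$. Combined with whichever direction of the first inclusion holds, this forces all $U_n$ equal, hence by Theorem \ref{main} (via Corollary \ref{f1}, since any nontrivial submodule contains some $U_n$, hence all of them, hence all generators $v_m \otimes v$, hence everything) the module is irreducible.

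I should be slightly careful about one subtlety: $V_{\alpha,\beta}^{\prime}$ could be a proper quotient or submodule of $V_{\alpha,\beta}$ only when $\alpha \in \mathbb{Z}$ with $\beta = 0$, or $\beta = 1$; since here $\alpha \notin \mathbb{Z}$, we have $V_{\alpha,\beta}^{\prime} = V_{\alpha,\beta}$ with the full basis $\{v_m : m \in \mathbb{Z}\}$ and no vanishing of $v_m$'s, so the action formula $L_n v_m = -(m+\alpha+\beta+n\beta)v_{m+n}$ applies unrestricted. In particular $v_n$ and $v_{n+1}$ are both nonzero basis vectors. The main obstacle — really the only place requiring thought — is ensuring the relation $L_{-1}v = 0$ in $L(c,0)$ is applied correctly and that the coefficient $n+1+\alpha$ is genuinely nonzero for all $n \in \mathbb{Z}$, which is exactly where the hypothesis $\alpha \notin \mathbb{Z}$ enters decisively. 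Everything else is a short computation, so I would keep the write-up to a few lines: establish $L_{-1}(v_{n+1}\otimes v) = -(n+1+\alpha)(v_n \otimes v)$, conclude $U_{n+1} = U_n$ for all $n$, and invoke Theorem \ref{main}.
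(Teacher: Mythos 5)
Your two basic computations are correct and are exactly the paper's: $L_{-1}(v_{n+1}\otimes v)=-(n+1+\alpha)\,v_{n}\otimes v$ (using $L_{-1}v=0$ in $L(c,0)$) gives $U_{n}\subseteq U_{n+1}$ for all $n$ since $\alpha\notin\mathbb{Z}$, and $L_{1}(v_{n}\otimes v)=-(n+\alpha+2\beta)\,v_{n+1}\otimes v$ gives $U_{n+1}\subseteq U_{n}$ whenever $n+\alpha+2\beta\neq0$. But there is a genuine gap: you "dispose of the case where this coefficient is nonzero for all $n$" and never return to the other case. The hypothesis $\alpha\notin\mathbb{Z}$ does \emph{not} rule out $\alpha+2\beta\in\mathbb{Z}$ (take, e.g., $\alpha=\beta=\tfrac13$), so there may be exactly one $n_{0}=-(\alpha+2\beta)\in\mathbb{Z}$ with $L_{1}(v_{n_{0}}\otimes v)=0$. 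At that spot your argument yields only $U_{n_{0}}\subseteq U_{n_{0}+1}$, and "whichever direction of the first inclusion holds" gives nothing: the $L_{1}$ relation is $0=0$ there. You are left with $\cdots=U_{n_{0}-1}=U_{n_{0}}\subseteq U_{n_{0}+1}=U_{n_{0}+2}=\cdots$, which does not force all $U_{n}$ equal; in particular cyclicity on $v_{n_{0}}\otimes v$, which Theorem \ref{main} requires for \emph{every} generator, is not established (Corollary \ref{f1} does not rescue this, since a submodule might contain only $U_{n_{0}}$).

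The missing step is a substitute for $L_{1}$ at $n=n_{0}$. The paper uses
\[
L_{-1}L_{2}\left(v_{n_{0}}\otimes v\right)=(n_{0}+\alpha+3\beta)(n_{0}+2+\alpha)\,v_{n_{0}+1}\otimes v=\beta\,(n_{0}+2+\alpha)\,v_{n_{0}+1}\otimes v\neq0,
\]
where $\beta\neq0$ (otherwise $n_{0}+\alpha=0$ would force $\alpha\in\mathbb{Z}$) and $n_{0}+2+\alpha\neq0$ again because $\alpha\notin\mathbb{Z}$; this gives $U_{n_{0}+1}\subseteq U_{n_{0}}$ and closes the loop. With that one extra computation added, your write-up matches the paper's proof; without it, the proof is incomplete precisely in the case $\alpha+2\beta\in\mathbb{Z}$, $\alpha\notin\mathbb{Z}$.
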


\begin{proof}
Since $L_{-1}v=0$ in $L(c,0)$, we have
\begin{align*}
L_{-1}(v_{n}\otimes v)  &  =-(n+\alpha)(v_{n-1}\otimes v)\text{, }\forall
n\in\mathbb{Z},\\
L_{1}(v_{n}\otimes v)  &  =-(n+\alpha+2\beta)(v_{n+1}\otimes v)\text{,
}\forall n\in\mathbb{Z}.
\end{align*}
If $n+\alpha+2\beta\in\mathbb{Z}$ we use
\[
L_{-1}L_{2}\left(  v_{n}\otimes v\right)  =\beta(n+2+\alpha)(v_{n+1}\otimes
v)\neq0
\]
to prove $U_{n}=U_{n+1}$ for all $n$, thus completing the proof.
\end{proof}

Now we focus on $V_{0,\beta}^{\prime}\otimes L(c,0)$. If $c\neq c_{p,q}$,
$L(c,0)=V(c,0)/U(\operatorname*{Vir})L_{-1}v$ is a free module over algebra
$U(\operatorname*{Vir}_{-}\setminus\left\{  L_{-1}\right\}  )$ with standard
PBW basis
\[
\left\{  L_{-i_{n}}\cdots L_{-i_{1}}v:i_{n}\geq\cdots\geq i_{1}>1\right\}  ,
\]
where $v$ is a highest weight vector.

Let $\alpha=0$ and $\beta\neq0,1$. Then the relations
\begin{align*}
L_{-1}(v_{n}\otimes v)  &  =-n(v_{n-1}\otimes v),\\
L_{1}(v_{n}\otimes v)  &  =-(n+2\beta)(v_{n+1}\otimes v),
\end{align*}
or
\[
L_{-1}L_{2}(v_{n}\otimes v)=(n+2)\beta(v_{n+2}\otimes v)\neq0
\]
in case $n+2\beta=0$ show that
\[
\cdots=U_{-2}=U_{-1}\supseteq U_{0}=U_{1}=\cdots
\]
If $\beta=0$ there is no $v_{0}$ in $V_{0,0}^{\prime}$, so we have
\[
\cdots=U_{-2}=U_{-1}\supseteq U_{1}=U_{2}=\cdots
\]
and if $\beta=1$ there is no $v_{-1}$, hence
\[
\cdots=U_{-3}=U_{-2}\supseteq U_{0}=U_{1}=\cdots
\]

\begin{theorem}
\label{n}Let $c\neq c_{p,q}$ and $\alpha\in\mathbb{Z}$. Module $V=V_{\alpha
,\beta}^{\prime}\otimes L(c,0)$ is reducible and contains a nontrivial
submodule $U$ such that $V/U\cong V(c,h)$ where $h=1-\beta$ if $\beta\neq1$,
and $h=1$ if $\beta=1$.
\end{theorem}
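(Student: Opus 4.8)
The plan is to normalise $\alpha=0$, take $U$ to be the smallest term of the chain of submodules $U_{n}$ computed just above, and then show that $V/U$ is free of rank one over $U(\operatorname*{Vir}_{-})$ on the image of a single vector $v_{-r}\otimes v$, where $r=1$ if $\beta\neq1$ and $r=2$ if $\beta=1$. A highest weight module that is free of rank one over $U(\operatorname*{Vir}_{-})$, with central charge $c$ and $L_{0}$-weight $h$, is by definition the Verma module $V(c,h)$, so this finishes the proof.

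Concretely, using $V_{\alpha,\beta}^{\prime}\cong V_{\alpha+k,\beta}^{\prime}$ I would assume $\alpha=0$ and treat the cases $\beta\notin\{0,1\}$, $\beta=0$, $\beta=1$ separately, putting $U:=U_{0}$ when $\beta\neq0$ and $U:=U_{1}$ when $\beta=0$. Since $L_{n}v=0$ one has $L_{n}(v_{-r}\otimes v)=(\text{scalar})\,v_{n-r}\otimes v\in U$ for all $n\geq1$, together with $L_{0}(v_{-r}\otimes v)=(r-\beta)\,v_{-r}\otimes v$ and $C(v_{-r}\otimes v)=c(v_{-r}\otimes v)$; moreover $V=U(\operatorname*{Vir})(v_{-r}\otimes v)$ because every $v_{m}\otimes v$ lies in $U_{m}$ and, by the chain, every $U_{m}$ is contained in the module generated by $v_{-r}\otimes v$. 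Hence $\bar v:=v_{-r}\otimes v+U$ generates $V/U$ over $U(\operatorname*{Vir}_{-})$ and is killed by $\operatorname*{Vir}_{+}$, so there is a canonical surjection $\pi\colon V(c,h)\twoheadrightarrow V/U$ with $h=r-\beta$, which is $1-\beta$ for $r=1$ and $1$ for $r=2$, as claimed. It remains to prove $\pi$ injective, equivalently that $X(v_{-r}\otimes v)\in U$ with $X\in U(\operatorname*{Vir}_{-})$ forces $X=0$; the case $X=1$ already gives $v_{-r}\otimes v\notin U$, so that $V$ is reducible.

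Two facts drive the injectivity. First, \emph{triangularity in the $v$-index}: as $\operatorname*{Vir}_{-}$ strictly lowers the index of $V_{0,\beta}^{\prime}$ and preserves that of $L(c,0)$, for $P\in U(\operatorname*{Vir}_{-})$ one has $P(v_{k}\otimes v)=v_{k}\otimes Pv+(\text{terms }v_{j}\otimes w\text{ with }j<k)$; in particular $X(v_{-r}\otimes v)$ involves only indices $\leq-r$. Second, the hypothesis $c\neq c_{p,q}$: by the description of $L(c,0)=V(c,0)/U(\operatorname*{Vir})L_{-1}v$ recalled above, for $Q\in U(\operatorname*{Vir}_{-})$ we have $Qv=0$ in $L(c,0)$ if and only if $Q\in U(\operatorname*{Vir}_{-})L_{-1}$. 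Since $U(\operatorname*{Vir}_{+})$ and $U(\operatorname*{Vir}_{0})$ carry $v_{0}\otimes v$, respectively $v_{1}\otimes v$, into the span of the vectors $v_{N}\otimes v$, we have $U=\sum_{N}U(\operatorname*{Vir}_{-})(v_{N}\otimes v)$ with $N$ running over the nonnegative integers (if $\beta\neq0$) or the positive integers (if $\beta=0$). I would then prove the reduction lemma: \emph{if $y=\sum_{N}Q_{N}(v_{N}\otimes v)$ has no component of $v$-index $\geq0$, then $y=0$}. For this, choose a representation with $N^{*}:=\max\{N:Q_{N}\neq0\}$ minimal; if $N^{*}$ existed, then comparing the index-$N^{*}$ components --- the left side has none --- would give $Q_{N^{*}}v=0$, hence $Q_{N^{*}}=Q^{\prime}L_{-1}$ and $Q_{N^{*}}(v_{N^{*}}\otimes v)=-N^{*}\,Q^{\prime}(v_{N^{*}-1}\otimes v)$, which either vanishes (at the boundary value of $N^{*}$, where the scalar is $0$ or $v_{N^{*}-1}$ is the index missing from $V_{\alpha,\beta}^{\prime}$) or can be absorbed into the $N^{*}-1$ term --- either way lowering $N^{*}$, a contradiction. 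Applying the lemma to $y=X(v_{-r}\otimes v)$ yields $X(v_{-r}\otimes v)=0$. Finally, the index-$(-r)$ component of $X(v_{-r}\otimes v)=0$ forces $Xv=0$ in $L(c,0)$, so $X=X^{\prime}L_{-1}$; as $L_{-1}(v_{-r}\otimes v)=r\,v_{-r-1}\otimes v\neq0$ we get $X^{\prime}(v_{-r-1}\otimes v)=0$, and iterating $\deg X$ times gives $X=\lambda L_{-1}^{\deg X}$ with $L_{-1}^{\deg X}(v_{-r}\otimes v)$ a nonzero multiple of $v_{-r-\deg X}\otimes v$; hence $\lambda=0$ and $X=0$.

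The main obstacle is the reduction lemma --- showing that the highest $v$-index appearing among the chosen generators of an element of $U$ can always be pushed down until it falls off the boundary. This is the one place where $c\neq c_{p,q}$ is essential, via the clean PBW basis of $L(c,0)$ over $U(\operatorname*{Vir}_{-}\setminus\{L_{-1}\})$; for $c=c_{p,q}$ a second singular vector appears in $V(c,0)$ and this argument --- and the statement itself --- breaks down, which is why that case is treated separately. The reduction to $\alpha=0$, the case analysis, and the highest weight verification are all routine computations with the commutation relations.
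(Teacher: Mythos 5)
Your proposal is correct and follows essentially the same route as the paper: normalize $\alpha=0$, use the chain $\cdots=U_{-2}=U_{-1}\supseteq U_{0}=U_{1}=\cdots$ (adjusted for $\beta=0,1$), identify $V/U$ as a highest weight module generated by the image of $v_{-r}\otimes v$, and establish that it is a Verma module via a leading-$v$-index argument resting on the fact that for $c\neq c_{p,q}$ one has $Qv=0$ in $L(c,0)$ iff $Q\in U(\operatorname*{Vir}_{-})L_{-1}$. Your ``reduction lemma'' merely packages, in one statement, the two leading-term arguments the paper runs separately (reducibility, and freeness of $V/U_{0}$ over $U(\operatorname*{Vir}_{-})$), spelling out details the paper leaves terse.
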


\begin{proof}
Since $\alpha\in\mathbb{Z}$ we can assume $\alpha=0$. Suppose $V$ is
irreducible. Then $U_{-2}=U_{1}$ so there exists $x\in U(\operatorname*{Vir})$
such that $x(v_{1}\otimes v)=v_{-2}\otimes v$. Recall that
\[
x=\sum_{\substack{k_{1},\ldots,k_{n}\in(\mathbb{Z}_{+})^{n} \\n\in\mathbb{N}%
}}L_{-n}^{k_{n}}\cdots L_{-1}^{k_{1}}x_{k_{1}\cdots k_{n}}
\]
for some homogeneous $x_{k_{1}\cdots k_{n}}\in U(\operatorname*{Vir}%
\nolimits_{+})$. Since $L_{k}(v_{n}\otimes v)=L_{k}v_{n}\otimes v$ for $k>0$
and $L_{-1}(v_{0}\otimes v)=0$ we can write
\[
\sum_{k=0}^{m}x_{k+2}(v_{k}\otimes v)=v_{-2}\otimes v
\]
for some $x_{j}\in U(\operatorname*{Vir}_{-}\setminus\left\{  L_{-1}\right\}
)_{-j}$, $m\in\mathbb{N}$. But then $v_{m}\otimes x_{m+2}v$ must be zero,
leading to $x_{m+2}v=0$, which is a contradiction since $L(c,0)$ is free over
$U(\operatorname*{Vir}_{-}\setminus\left\{  L_{-1}\right\}  )$.

From Lemma \ref{kvoc} we know that $V/U_{1}$ is a highest weight module with
highest weight $1-\beta$ and cyclic generator $v_{-1}\otimes v$ (or highest
weight module with highest weight $1$ and generator $v_{-2}\otimes v$ if
$\beta=1$). Let us show this module is free over $U(\operatorname*{Vir}_{-})$.
For simplicity, suppose $\beta\neq0$. Notice that
\begin{align*}
U_{0}  &  =\left\{  u(v_{0}\otimes v):u\in U\left(  \operatorname*{Vir}%
\right)  \right\}  =\\
&  =\operatorname*{span}\nolimits_{%
\mathbb{C}
}\left\{  u^{\prime}(v_{k}\otimes v):u^{\prime}\in U\left(
\operatorname*{Vir}\nolimits_{-}\setminus\left\{  L_{-1}\right\}  \right)
,k\geq0\right\}
\end{align*}
and each $u^{\prime}(v_{k}\otimes v)$ contains a component $v_{k}\otimes
u^{\prime}v\neq0$. Suppose $V/U_{0}$ is not free. Then there exists $x\in
U\left(  \operatorname*{Vir}_{-}\right)  $ such that $x(v_{-1}\otimes v)\in
U_{0}$. But this is a contradiction since $x\left(  v_{-1}\otimes v\right)  $
can not have a component $v_{k}\otimes u^{\prime}v$ for $k\geq0$.
\end{proof}

\begin{remark}
For another proof of Theorem \ref{n} see Remark \ref{n2} on pg.\ \pageref{n2}.
\end{remark}

\begin{theorem}
\label{m}Let $c\neq c_{p,q}$ and $\alpha\in\mathbb{Z}$. Then
$U=U(\operatorname*{Vir})(v_{1}\otimes v)$ is an irreducible submodule in
$V_{\alpha,\beta}^{\prime}\otimes L(c,0)$, not isomorphic to some
$V_{\gamma,\delta}^{\prime}\otimes L(c,h)$.
\end{theorem}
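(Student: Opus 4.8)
## Proof proposal

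The plan is to reduce to $\alpha=0$, prove that $U:=U_{1}$ is irreducible, and then rule out the isomorphism by comparing the weight‑graded spaces $\ker(L_{-1})$. First reduce to $\alpha=0$ using $V_{\alpha,\beta}^{\prime}\cong V_{\alpha+k,\beta}^{\prime}$. For irreducibility: by Corollary \ref{f1} every nontrivial submodule of $V:=V_{0,\beta}^{\prime}\otimes L(c,0)$ contains some $U_{n}$, and the chains of inclusions displayed just before Theorem \ref{n} (in each of the cases $\beta\notin\{0,1\}$, $\beta=0$, $\beta=1$) show that $U_{1}$ is the smallest among all admissible $U_{n}$, i.e.\ $U_{1}\subseteq U_{n}$ for every admissible $n$. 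Hence a nontrivial submodule $M\subseteq U$ contains some $U_{n}\supseteq U_{1}=U\supseteq M$, so $M=U$, and $U$ is irreducible.

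For the non‑isomorphism, suppose $\varphi\colon U\xrightarrow{\ \sim\ }W:=V_{\gamma,\delta}^{\prime}\otimes L(c',h')$. Since $C$ acts by the scalar $c$ on $V$, hence on $U$, we get $c'=c$, so in particular $c'\neq c_{p,q}$. A $\operatorname*{Vir}$‑isomorphism commutes with $L_{-1}$ and preserves $L_{0}$‑eigenvalues, so $\varphi$ identifies the weight‑graded spaces $\ker(L_{-1}\mid_{U})$ and $\ker(L_{-1}\mid_{W})$. On the $U$ side this kernel is nonzero: since $L_{-1}v=0$ in $L(c,0)$ and $L_{-1}v_{0}=-\alpha v_{-1}=0$, we have $L_{-1}(v_{0}\otimes v)=0$ when $\beta\neq0$, and $v_{0}\otimes v=-L_{-1}(v_{1}\otimes v)\in U_{1}=U$; when $\beta=0$ one uses $v_{1}\otimes v\in U$ instead, which is killed by $L_{-1}$ because $v_{0}$ is absent from $V_{0,0}^{\prime}$. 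Hence $\ker(L_{-1}\mid_{W})\neq0$ as well.

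The crux is then to show that $\ker(L_{-1}\mid_{W})\neq0$ forces $h'=0$ and $\gamma\in\mathbb{Z}$. Writing a weight vector of $W$ as $\eta=\sum_{j\geq0}v_{k_{j}}\otimes y_{j}$ with $y_{j}\in L(c',h')_{h'+j}$ and $k_{j}=j+p$ for the appropriate $p\in\mathbb{Z}$, the equation $L_{-1}\eta=0$ unwinds into a boundary relation $(p+\gamma)y_{0}=0$ (vacuous precisely when the index $p-1$ is the one missing from a reduced series $V_{0,0}^{\prime}$ or $V_{0,1}^{\prime}$, both of which have $\gamma=0$) together with the recursion $L_{-1}y_{j}=(j+p+\gamma+1)\,y_{j+1}$ for $j\geq0$. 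Because $\eta$ is a finite sum this recursion must terminate, i.e.\ some power $L_{-1}^{k}$ must annihilate a nonzero vector of positive $L(c',h')$‑degree; but $L_{-1}$ is injective on $\bigoplus_{n\geq1}L(c',h')_{n}$ for $c'\neq c_{p,q}$ (equivalently $\ker L_{-1}$ in $L(c',h')$ equals $\mathbb{C}v$ if $h'=0$ and $0$ otherwise), so the only surviving solutions are the degree‑$0$ ones $\eta\in\mathbb{C}(v_{k_{0}}\otimes v)$, and these exist only for $h'=0$ and $\gamma\in\mathbb{Z}$ (the boundary relation gives $p=-\gamma\in\mathbb{Z}$ in the honest case, and $\gamma=0$ already in the reduced cases). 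Once $h'=0$ and $\gamma\in\mathbb{Z}$ are forced, Theorem \ref{n} applied to $W=V_{\gamma,\delta}^{\prime}\otimes L(c,0)$ (with $\alpha=\gamma\in\mathbb{Z}$ and $c\neq c_{p,q}$) shows $W$ is reducible, contradicting the irreducibility of $U$; this contradiction completes the proof. The main obstacle is exactly the injectivity statement for $L_{-1}$ on $L(c,h')$ when $c\neq c_{p,q}$, on which the termination argument relies, together with the somewhat fussy bookkeeping of the boundary relation for the reducible intermediate series $V_{0,0}^{\prime}$ and $V_{0,1}^{\prime}$.
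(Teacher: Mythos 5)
Your irreducibility half is fine and is essentially the paper's argument: reduce to $\alpha=0$, note $U=U_{k}$ for all admissible $k$ large (so $U_{1}$ is minimal among the $U_{n}$), and apply Corollary \ref{f1}. Your non-isomorphism computation is also, at bottom, the same relation as the paper's equation (\ref{1}): transporting $\ker L_{-1}$ through the isomorphism and expanding a weight vector of $W=V_{\gamma,\delta}^{\prime}\otimes L(c,h^{\prime})$ gives exactly the boundary relation $(p+\gamma)y_{0}=0$ and the recursion $L_{-1}y_{j}=(j+p+\gamma+1)y_{j+1}$ that the paper writes down for $\Phi(v_{0}\otimes v)$.

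The genuine gap is the step you yourself flag: the claim that for $c\neq c_{p,q}$ the kernel of $L_{-1}$ on $L(c,h^{\prime})$ is $\mathbb{C}v$ if $h^{\prime}=0$ and $0$ otherwise. This is not a routine fact and your proposal gives no argument for it. It does not follow from freeness of Verma modules over $\mathbb{C}[L_{-1}]$, because $L(c,h^{\prime})$ is a proper quotient of $V(c,h^{\prime})$ precisely in the cases where your termination argument has content; and the natural $sl_{2}$-argument (using the triple $L_{-1},2L_{0},-L_{1}$ and the lower bound on the $L_{0}$-spectrum) only shows that a nonzero kernel vector in degree $N$ forces $h^{\prime}=-N/2\leq0$, so negative highest weights are not excluded by elementary weight considerations. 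The statement is in fact true for $c\neq c_{p,q}$, but filling the gap requires a real input, e.g.\ Li's vacuum-like vector theorem: since $c\neq c_{p,q}$, $L(c,h^{\prime})$ is a module over the simple VOA $L(c,0)$, and $L_{-1}u=0$ forces $e^{-zL_{-1}}\mathcal{Y}(a,z)u$ to be constant in $z$, whence $a\mapsto a_{-1}u$ is a nonzero module map $L(c,0)\rightarrow L(c,h^{\prime})$, which by irreducibility forces $h^{\prime}=0$ and $u$ proportional to the highest weight vector. Note that the paper organizes the argument so as to need much less: it splits on $\gamma\in\mathbb{Z}$ versus $\gamma\notin\mathbb{Z}$; in the latter case only the bottom components of (\ref{1}) are used (the nonvanishing coefficients $i+\gamma$ kill all $x_{i}$, no injectivity needed), and in the former case one only needs that $L_{-1}^{k}w\neq0$ in $L(c,h)$ for $h\neq0$, a far weaker special case than your blanket injectivity claim. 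So either import such a lemma with proof or citation, or reorganize along the paper's case split; as written, the crux of your non-isomorphism argument is asserted rather than proved.
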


\begin{proof}
Since $U=U_{k}$ for every $k\in\mathbb{N}$, irreducibility follows from
Corollary \ref{f1}.

Suppose there is a nontrivial $\operatorname*{Vir}$-homomorphism
$\Phi:U\rightarrow V_{\gamma,\delta}^{\prime}\otimes L(c,h)$ for
$\gamma,\delta,h\in\mathbb{C}$. Since $\operatorname*{Supp}U_{+}%
=-\beta+\mathbb{Z}$ and $\operatorname*{Supp}V_{\gamma,\delta}^{\prime}\otimes
L(c,h)=h-\gamma-\delta+\mathbb{Z}$, we have $h-\gamma-\delta+m=-\beta$ for
some $m\in\mathbb{Z}$. Let $V_{\gamma,\delta}^{\prime}=\bigoplus
\limits_{n\in\mathbb{Z}}\mathbb{C}w_{n}$ and $w$ a highest weight vector in
$L\left(  c,h\right)  $.

Let $\beta\neq0$. Then $U=U(\operatorname*{Vir})(v_{0}\otimes v)$ and
\[
\Phi\left(  v_{0}\otimes v\right)  =w_{0}\otimes x_{0}+w_{1}\otimes
x_{1}+\cdots+w_{n}\otimes x_{n}
\]
where $x_{j}\in L\left(  c,h\right)  _{j}$. Since $L_{-1}\left(  v_{0}\otimes
v\right)  =0$ we have $L_{-1}\Phi(v_{0}\otimes v)=0$, i.e.\
\begin{equation}
\sum_{i=0}^{n}\left(  i+\gamma\right)  w_{i-1}\otimes x_{i}=\sum_{i=0}%
^{n}w_{i}\otimes L_{-1}x_{i} \label{1}%
\end{equation}
Suppose $\gamma\in\mathbb{Z}$ i.e.\ $\gamma\neq0$. Then (\ref{1}) becomes
\[
\sum_{i=0}^{n-1}(i+1)w_{i}\otimes x_{i+1}=\sum_{i=0}^{n}w_{i}\otimes
L_{-1}x_{i}
\]
which leads to $x_{i+1}=\frac{1}{i+1}L_{-1}x_{i}$ for $i=0,1,\ldots,n-1$ and
$L_{-1}x_{n}=0$. Hence,
\[
x_{j}=\frac{1}{j!}L_{-1}^{j}x_{0}\text{ za }j=1,\ldots,n\text{.}
\]
We can assume $x_{0}=w$ which leads to $L_{-1}^{n}w=0$. This means that $h=0$
but then $V_{0,\delta}^{\prime}\otimes L(c,0)$ is reducible so $\Phi$ is not
an isomorphism. Therefore $\gamma\notin\mathbb{Z}$. Then from (\ref{1}) it
follows that $x_{0}=0$. Let $k\in\mathbb{N}$ be the smallest such that
$x_{k}\neq0$. Then (\ref{1}) becomes
\[
\sum_{i=k-1}^{n-1}w_{i}\otimes(i+1+\gamma)x_{i+1}=\sum_{i=k}^{n}w_{i}\otimes
L_{-1}x_{i}
\]
leading to $x_{k}=0$ a contradiction.

If $\beta=0$, the proof is essentially the same except that we consider
$v_{1}\otimes v$ instead of $v_{0}\otimes v$.
\end{proof}

Now we generalize Theorem \ref{n}.

\begin{theorem}
\label{poly}Let $c\neq c_{p,q}$ and suppose $V(c,h)$ is reducible with degree
$m$. Then a degree $m$ polynomial $p(x)\in\mathbb{Q}(\alpha,\beta,h)\left[
x\right]  $ exists such that $V_{\alpha,\beta}^{\prime}\otimes L(c,h)$ is
reducible if and only if $p$ has an integral root. For every integral root
$n$, there is a subquotient in $V_{\alpha,\beta}^{\prime}\otimes L(c,h)$ which
is isomorphic to a highest weight module of the highest weight either
$h-\alpha-\beta-n+1$ or $h+1$ in case $n=-\alpha\in\mathbb{Z}$ and $\beta=1$.
\end{theorem}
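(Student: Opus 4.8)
The plan is to reduce irreducibility of $V = V_{\alpha,\beta}^{\prime}\otimes L(c,h)$ to a single algebraic condition extracted from the singular vector. By Theorem~\ref{main} and Corollary~\ref{f1}, $V$ is reducible iff $U_{n}\neq U_{n+1}$ for some $n\in\mathbb{Z}$, and since $L_{1}(v_{n}\otimes v) = -(n+\alpha+2\beta)v_{n+1}\otimes v$ we always have $U_{n}\supseteq U_{n+1}$ once $n+\alpha+2\beta\neq 0$ (the finitely many exceptional $n$ are handled as in Proposition~\ref{j}, using $L_{-1}L_{2}$ or similar). So the whole question is whether $v_{n}\otimes v\in U_{n+1}$ for every $n$, i.e. whether the equation $x(v_{n+1}\otimes v) = v_{n}\otimes v$ has a solution $x\in U(\operatorname*{Vir})$.

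The key step is to turn this into a polynomial condition. Let $s\in V(c,h)_{h+m}$ be the weight $m$ singular vector, written $s = P(L_{-1},L_{-2},\dots)\,v$ for a suitable noncommutative polynomial $P$ in the negative modes, so that $sv = 0$ in $L(c,h)$. First I would show that if $x(v_{n+1}\otimes v) = v_{n}\otimes v$ is solvable, then it is solvable with $x$ of a normalized form: using the PBW decomposition $x = \sum L_{-k}^{a_{k}}\cdots L_{-1}^{a_{1}} x_{\vec a}$ with $x_{\vec a}\in U(\operatorname*{Vir}_{+})$ homogeneous, and the fact that $L_{k}(v_{j}\otimes v) = L_{k}v_{j}\otimes v + v_{j}\otimes L_{k}v$ reduces the $L(c,h)$-component to the highest weight line modulo applying singular-vector relations, one sees that $x$ may be taken in $U(\operatorname*{Vir}_{-})$, of the form $x = \sum_{j\geq 0} u_{j}(v_{n+1-j}\otimes v)$ with $u_{j}\in U(\operatorname*{Vir}_{-})_{-j}$. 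Matching components in $\bigoplus_{j}\mathbb{C}v_{n+1-j}\otimes L(c,h)_{h+j}$, solvability forces a relation in $L(c,h)$ at level $m$: the top component $v_{n+1-m}\otimes (\text{something})$ must vanish, and that ``something'' is exactly the singular vector $s$ with each $L_{-r}$ replaced by a scalar depending linearly on $n, \alpha, \beta$ (coming from $L_{-r}v_{k} = -(k+\alpha+\beta-r\beta)v_{k-r}$). Collecting these coefficients, the obstruction becomes a polynomial $p_{n}$ evaluated appropriately; reorganizing $p_{n}$ as a polynomial in the shift parameter gives a single degree $m$ polynomial $p(x)\in\mathbb{Q}(\alpha,\beta,h)[x]$ whose integral roots $n$ are precisely the indices where $U_{n}\supsetneq U_{n+1}$. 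That $p$ has degree exactly $m$ follows because the leading term of the singular vector $P$ contains $L_{-1}^{m}$ (the classical fact about the $L_{-1}^{m}$-coefficient of a degree $m$ singular vector being nonzero when $c\neq c_{p,q}$), which contributes a nonzero leading coefficient after the substitution.

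For the subquotient statement, suppose $n$ is an integral root, so $U_{n}\supsetneq U_{n+1}$ while $U_{n+k}\subseteq U_{n+1}$ for all $k\geq 2$ (as $p$ has only finitely many roots and for generic large indices the inclusions are equalities — more precisely one checks $U_{n+1}=U_{n+2}=\cdots$ once past the largest root). Then Lemma~\ref{kvoc} applies with the chain stabilizing above level $n+1$: $U_{n+1}/U_{n+2}$ — or rather, examining the proof of that lemma, $v_{n+1}\otimes v + U_{n+2}$ is a cyclic highest weight vector — gives a highest weight quotient of weight $h-\alpha-\beta-(n+1) = h-\alpha-\beta-n+1$, hence a subquotient of $V$ isomorphic to a highest weight module of that weight. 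In the degenerate case $n = -\alpha\in\mathbb{Z}$ with $\beta=1$, the module $V_{\alpha,1}^{\prime}$ omits $v_{-1}$, so the relevant cyclic vector is $v_{n}\otimes v$ (at the index $n=-\alpha$, one level lower in the shifted labeling), and $L_{0}$ acts there with eigenvalue $h-\alpha-\beta-n = h-1+\cdots$; recomputing, $L_{0}(v_{-\alpha}\otimes v) = (h-\alpha-\beta-(-\alpha))(v_{-\alpha}\otimes v)$ — wait, with $\beta=1$ and the missing $v_{-1}$ the surviving relevant generator produces highest weight $h+1$. This bookkeeping with the two exceptional intermediate series modules $V_{0,0}^{\prime}, V_{0,1}^{\prime}$ is the one place to be careful.

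The main obstacle I expect is the first part of the middle paragraph: rigorously justifying that a solution $x$ can be normalized into $U(\operatorname*{Vir}_{-})$ and then reading off that the obstruction is \emph{literally} the singular vector polynomial (not merely some polynomial) evaluated at the linear substitution $L_{-r}\mapsto -(n+1-j+\alpha+\beta-r\beta)$. One must argue that no lower-level relations in $L(c,h)$ can interfere — i.e. that at level $m$ the only relation is the singular vector one — and that the top-component equation is not automatically satisfiable by adjusting the lower $u_{j}$'s. This uses freeness of $V(c,h)$ over $U(\operatorname*{Vir}_{-})$ below level $m$ and the structure of $J(c,h)$ (singly generated since $c\neq c_{p,q}$). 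Everything else — the degree count via the $L_{-1}^{m}$ coefficient, the stabilization of the $U_{n}$ chain, and invoking Lemma~\ref{kvoc} — is then routine.
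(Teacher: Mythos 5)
Your plan follows the paper's own route: extract from the level-$m$ singular vector $u$ (normalized so the $L_{-1}^{m}$-coefficient is $1$, which indeed gives the degree-$m$ leading term) an identity of the form $u(v_{n+m-1}\otimes v)+\sum_{i=1}^{m-1}u_{i}(v_{n+m-1-i}\otimes v)=p(n)\,v_{n-1}\otimes v$, conclude $U_{n-1}\subseteq U_{n}+\cdots+U_{n+m-1}$ when $p(n)\neq 0$, and for the converse write a putative solution of $v_{n-1}\otimes v\in U_{n}$ as $\sum_{i}x_{i+1}(v_{n+i}\otimes v)$ with $x_{j}\in U(\operatorname*{Vir}_{-})_{-j}$, force the top component to vanish, so $x_{k}v=0$, hence $x_{k}\in U(\operatorname*{Vir}_{-})u$ by freeness and the fact that $J(c,h)$ is generated by the single vector $uv$ (here $c\neq c_{p,q}$), and induct. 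The problem is that you explicitly stop at exactly this point ("the main obstacle I expect"): the elimination induction that produces the identity above (rewriting $v_{n+m-i}\otimes L_{-j_{k}}\cdots L_{-j_{1}}v$ as $L_{-j_{k}}(v_{n+m-i}\otimes L_{-j_{k-1}}\cdots v)$ plus a scalar multiple of $v_{n+m-i-j_{k}}\otimes L_{-j_{k-1}}\cdots v$, pushing everything into $U_{n},\dots,U_{n+m-1}$), and the induction on the length $k$ of a putative solution (using that same identity to strip off $y_{k-m}u$ and lower $k$ until $k=m$, where $p(n)=0$ blocks the last step) are the substance of the theorem. Naming "freeness below level $m$" and "singly generated maximal submodule" identifies the right ingredients, but as written the central argument is deferred, not given.

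Two concrete steps also fail or are garbled as stated. First, your treatment of the exceptional index $n+\alpha+2\beta=0$ "as in Proposition \ref{j}, using $L_{-1}L_{2}$" does not work when $h\neq 0$: since $L_{-1}v\neq 0$, $L_{-1}L_{2}(v_{n}\otimes v)$ carries the extra component $v_{n+2}\otimes L_{-1}v$ and yields no inclusion between the $U_{k}$'s; the paper instead bridges this single gap with the singular-vector relation at a neighbouring index (e.g.\ $p(2)\neq 0$, available in the no-integral-root branch) or via the missing basis vector when $(\alpha,\beta)$ is $(0,0)$ or $(-2,1)$. Second, the subquotient bookkeeping is off: with the normalization fixed by the theorem, an integral root $n$ gives the break $U_{n-1}\supsetneq U_{n}$, and Lemma \ref{kvoc} (whose hypothesis $U_{n-1+k}\subsetneq U_{n-1}$ you must verify from the $L_{1}$-chain) yields $U_{n-1}/U_{n}$ of highest weight $h-\alpha-\beta-(n-1)=h-\alpha-\beta-n+1$; your computation "$h-\alpha-\beta-(n+1)=h-\alpha-\beta-n+1$" is an arithmetic slip, and the exceptional case $\beta=1$, $n=-\alpha$ (where $v_{n-1}$ is absent after normalizing $\alpha=0$) should produce $U_{n-2}/U_{n}$ generated by $v_{-2}\otimes v$ with highest weight $h+1$, which you assert but never compute. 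You also omit the $h=0$ (degree $1$) case, which the paper settles separately via Proposition \ref{j} and Theorem \ref{n} with $p(x)=-(x+\alpha)$.
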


\begin{proof}
When $h=0$, we have $p(x)=-(x+\alpha)$, and theorem is essentialy a
combination of Proposition \ref{j} and Theorem \ref{n}. Therefore we assume
$m>1$.

First we find such polynomial $p$ using a singular vector $uv\in V(c,h)_{h+m}
$, $u\in U(\operatorname*{Vir}_{-})_{-m}$. It is well known that $u=L_{-1}%
^{m}+\sum q_{i_{1},\ldots,i_{n}}L_{-i_{n}}\cdots L_{-i_{1}}$, $i_{n}\geq
\cdots\geq i_{1}$, $i_{1}+\cdots+i_{n}=m$ for some $q_{i_{1},\ldots,i_{n}}%
\in\mathbb{Q}(c,h)$ and $q_{1,\ldots,1}=1$ (see \cite{Feigin-Fuchs}). Then
\[
u(v_{n+m-1}\otimes v)=p^{\prime}(n)v_{n-1}\otimes v+\sum a_{i}v_{n+m-i}\otimes
L_{-j_{k}}\cdots L_{-j_{1}}v
\]
where $p^{\prime}(n)\in\mathbb{Q}(\alpha,\beta,h)\left[  n\right]  $ is a
degree $m$ polynomial, $2\leq i\leq m$, and $0<k<m$ (since $uv=0$). We want to
eliminate the sum using induction on $k$. Let $\lambda=(n+m-i+\alpha
+(1-j)\beta)$. Then
\[
v_{n+m-i}\otimes L_{-j}v=L_{-j}(v_{n+m-i}\otimes v)+\lambda v_{n+m-i-j}\otimes
v
\]
and
\begin{gather*}
v_{n+m-i}\otimes L_{-j_{k}}\cdots L_{-j_{1}}v=\\
=L_{-j_{k}}(v_{n+m-i}\otimes L_{-j_{k-1}}\cdots L_{-j_{1}}v)+\lambda
v_{n+m-i-j}\otimes L_{-j_{k-1}}\cdots L_{-j_{1}}v.
\end{gather*}
Therefore, we get $u_{i}\in U(\operatorname*{Vir}_{-})_{-(m-i+1)}$ such that
\begin{equation}
u(v_{n+m-1}\otimes v)+\sum_{i=1}^{m-1}u_{i}(v_{n+m-1-i}\otimes v)=p(n)v_{n-1}%
\otimes v \label{sing}%
\end{equation}
with $p(n)\in\mathbb{Q}(\alpha,\beta,h,c)\left[  n\right]  $ a degree $m$
polynomial. This shows that
\begin{equation}
U_{n-1}\subseteq U_{n}+\cdots+U_{n+m-1} \label{rel}%
\end{equation}
if $p(n)\neq0$.

Next we prove irreducibility in case $p$ has no integral roots. Assume
$\alpha+2\beta\notin\mathbb{Z}$. Then
\[
L_{1}(v_{n}\otimes v)=-(n+\alpha+2\beta)v_{n+1}\otimes v\neq0
\]
so $U_{n}\supseteq U_{n+1}$ for all $n\in\mathbb{Z}$. Therefore (\ref{rel})
becomes $U_{n-1}\subseteq U_{n}$ and, if $p$ has no integral roots then
$U_{n}=U_{n+1}$ for all $n$. Therefore $V_{\alpha,\beta}^{\prime}\otimes
L(c,h)$ is irreducible. In case $\alpha=0$ and $\beta=0$ (resp.$\ \beta=1$),
$U_{0}$ (resp.$\ U_{-1}$) does not exist but $L_{2}(v_{n}\otimes v)$ shows
that $U_{-1}\supseteq U_{1}$ (resp.$\ U_{-2}\supseteq U_{0}$). Combined with
(\ref{rel}) this proves irreducibility.

Now suppose $\alpha+2\beta\in\mathbb{Z}$. Since $\alpha$ is invariant modulo
$\mathbb{Z}$ we may assume $\alpha+2\beta=0$. Now we have
\begin{equation}
\cdots\supseteq U_{-1}\supseteq U_{0},U_{1}\supseteq U_{2}\supseteq
\cdots\label{2}%
\end{equation}
If $\alpha=\beta=0$ (resp.$\ \alpha=-2$ and $\beta=1$), then $U_{0}$
(resp.$\ U_{1}$) does not exist so we do not have a problem. Let $\beta
\neq0,1$ and suppose $p(2)\neq0$. Then (\ref{rel}) shows that $U_{1}\subseteq
U_{2}$ which combined with (\ref{2}) implies $U_{1}=U_{2}\subseteq U_{0}$.
Specially, if $p$ has no integral roots we get $U_{n}=U_{n+1}$ for all $n$, so
$V_{\alpha,\beta}^{\prime}\otimes L(c,h)$ is irreducible.

Now let $p(n)=0$ for some $n\in\mathbb{Z}$. Suppose $v_{n-1}\otimes
v=x(v_{n}\otimes v)$ for $x\in U(\operatorname*{Vir}_{-})$. Note that we can
write
\[
v_{n-1}\otimes v=\sum_{i=0}^{k-1}x_{i+1}(v_{n+i}\otimes v)
\]
for some $k\in\mathbb{N}$, and $x_{j}\in U(\operatorname*{Vir}_{-})_{-j}$. But
then $v_{n+k}\otimes x_{k}v=0$ hence $x_{k}\in U(\operatorname*{Vir}_{-})u$
which implies $k\geq m$ and $x_{k}=y_{k-m}u$ for some $y_{k-m}\in
U(\operatorname*{Vir}_{-})_{m-k}$. Suppose $k>m$. Then we can apply
(\ref{sing}) to show that
\[
x_{k}(v_{n+k-1}\otimes v)=y_{k-m}u(v_{n+k-1}\otimes v)=\sum_{i=1}^{m-1}%
y_{k-m}u_{i}(v_{n+k-1-i}\otimes v)
\]
thus we can write $v_{n-1}\otimes v=\sum_{i=0}^{k-2}x_{i+1}(v_{n+1}\otimes v)
$. Proceeding by induction, we conclude that $k=m$ and this leads back to
(\ref{sing}). However, $p(n)=0$ so we conclude that the equation
$v_{n-1}\otimes v=x(v_{n}\otimes v)$ has no solution in $U(\operatorname*{Vir}%
_{-})$. Therefore $U_{n-1}\subsetneq V_{\alpha,\beta}^{\prime}\otimes L(c,h)$
and $U_{n-1}/U_{n}$ is the highest weight module with highest weight
$h-\alpha-\beta-n+1$ by Lemma \ref{kvoc}. In case $\alpha\in\mathbb{Z}$,
$\beta=1$ and $n=-\alpha$, there is no $U_{n-1}$ and $U_{n-2}/U_{n}$ is the
highest weight module generated by highest weight $h+1$ vector $v_{-2}\otimes
v$. This completes the proof.
\end{proof}

\begin{remark}
Claims of Theorem \ref{poly} have been proved in Theorem 1 a) in
\cite{Chen-Guo-Zhao}. Polynomial $p$ is closely related to $\varphi_{n}$ in
\cite{Chen-Guo-Zhao}. Respective to terminology and definitions of the
Virasoro algebra and intermediate series used in this paper, $p$ can be
regarded as a linear map $\varphi_{n}:U\left(  \operatorname*{Vir}_{-}\right)
\rightarrow\mathbb{C}$ defined by
\[
\varphi_{n}\left(  L_{-k_{r}}\cdots L_{-k_{1}}\right)  =%
{\textstyle\prod\limits_{j=1}^{r}}
\left(  \alpha+\left(  1-k_{j}\right)  \beta+n+\sum_{i=1}^{j}k_{i}-1\right)
\]

\end{remark}

\begin{example}
\label{s2}If $c=\frac{10h-16h^{2}}{1+2h}$, a singular vector in $V(c,h)$ is
$s_{2}v=(L_{-1}^{2}-\frac{4h+2}{3}L_{-2})v$. By direct computation we get
\begin{gather}
\left(  \frac{-1}{n+\alpha+2\beta}s_{2}L_{1}+2\left(  n+1+\alpha\right)
L_{-1}\right)  \left(  v_{n}\otimes v\right)  =\label{sg}\\
=-\left(  \left(  n+1+\alpha\right)  \left(  n+\alpha\right)  -\frac{4h+2}%
{3}\left(  n+1+\alpha-\beta\right)  \right)  v_{n-1}\otimes v.\nonumber
\end{gather}
The roots of $p\left(  n\right)  =\left(  n+1+\alpha\right)  \left(
n+\alpha\right)  -\frac{4h+2}{3}\left(  n+1+\alpha-\beta\right)  $ are
$-\alpha+\frac{4h-1}{6}\pm\frac{1}{6}\sqrt{\left(  4h+5\right)  ^{2}%
-24\beta\left(  2h+1\right)  }$.
\end{example}

\begin{example}
\label{s3}For $m=3$, the roots of polynomial $p$ are $-\alpha+h$ and
$-\alpha+\frac{h-1}{2}\pm\frac{1}{2}\sqrt{\left(  h+3\right)  ^{2}%
-8\beta\left(  h+1\right)  }$.
\end{example}

\begin{corollary}
\label{sing2}Suppose Verma module $V\left(  c,h\right)  $ contains a weight
$2$ singular vector and $-\alpha+\frac{4h-1\pm\sqrt{\left(  4h+5\right)
^{2}-24\beta\left(  2h+1\right)  }}{6}\notin\mathbb{Z}$. Then $V_{\alpha
,\beta}^{\prime}\otimes L\left(  c,h\right)  $ is irreducible.
\end{corollary}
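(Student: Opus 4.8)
The plan is to read this off directly from Theorem \ref{poly} and the explicit computation already carried out in Example \ref{s2}. First I would recall that $V(c,h)$ admits a weight $2$ singular vector exactly when $c=\frac{10h-16h^{2}}{1+2h}$, in which case that vector is a scalar multiple of $s_{2}v=(L_{-1}^{2}-\frac{4h+2}{3}L_{-2})v$; this is precisely the setting of Example \ref{s2}. For $h\neq0$ the Verma module $V(c,h)$ has no singular vector of weight $h+1$ (since $V(c,h)_{h+1}=\mathbb{C}L_{-1}v$ and $L_{1}L_{-1}v=2hv\neq0$), so its degree of reducibility is exactly $m=2$, and Theorem \ref{poly} then supplies a degree $2$ polynomial $p\in\mathbb{Q}(\alpha,\beta,h)[x]$ with the property that $V_{\alpha,\beta}^{\prime}\otimes L(c,h)$ is reducible if and only if $p$ has an integral root.

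Next I would simply invoke the identification of this polynomial made in Example \ref{s2}: the direct computation \eqref{sg} realizes a relation of the form \eqref{sing} (in fact it yields $U_{n-1}\subseteq U_{n}$ outright when $n+\alpha+2\beta\neq0$ and $p(n)\neq0$), and shows that one may take
\[
p(n)=(n+1+\alpha)(n+\alpha)-\tfrac{4h+2}{3}(n+1+\alpha-\beta),
\]
whose two roots are $-\alpha+\frac{4h-1}{6}\pm\frac{1}{6}\sqrt{(4h+5)^{2}-24\beta(2h+1)}$. The hypothesis of the corollary says exactly that neither of these numbers lies in $\mathbb{Z}$, i.e.\ that $p$ has no integral root; hence Theorem \ref{poly} gives that $V_{\alpha,\beta}^{\prime}\otimes L(c,h)$ is irreducible, which is the assertion.

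I do not expect a genuine obstacle; the only two points needing a word of care are the cases formally outside the hypotheses of Theorem \ref{poly}. When $h=0$ one has $c=0$ and $L(0,0)$ is the trivial module, so $V_{\alpha,\beta}^{\prime}\otimes L(0,0)\cong V_{\alpha,\beta}^{\prime}$ is irreducible outright. When $c=c_{p,q}$ (which can in fact occur together with a weight $2$ singular vector, e.g.\ $c=\frac12$ with $h=\frac12$ or $h=\frac1{16}$), I would point out that the implication ``$p$ has no integral root $\Rightarrow V_{\alpha,\beta}^{\prime}\otimes L(c,h)$ irreducible'' in the proof of Theorem \ref{poly} uses nothing beyond the existence of a degree $2$ singular vector together with the identities \eqref{sg}, \eqref{rel} and the actions of $L_{1},L_{2}$ on $v_{n}\otimes v$ (including the case analysis \eqref{2} according to whether $\alpha+2\beta\in\mathbb{Z}$) — and none of these depend on $c\neq c_{p,q}$, so the argument, and therefore the conclusion, goes through verbatim; the restriction $c\neq c_{p,q}$ is only used for the converse implication and for the subquotient description, which are not invoked here. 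Finally one may appeal to Theorem \ref{main} (or Corollary \ref{f1}) to convert the equalities $U_{n}=U_{n+1}$ for all $n$ into irreducibility.
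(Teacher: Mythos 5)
Your proposal is correct and is essentially the paper's own (implicit) argument: the corollary is read off from Theorem \ref{poly} together with the polynomial $p(n)=(n+1+\alpha)(n+\alpha)-\frac{4h+2}{3}(n+1+\alpha-\beta)$ and its roots computed in Example \ref{s2}. Your extra care about the cases $h=0$ and $c=c_{p,q}$ (observing that the ``no integral root $\Rightarrow$ irreducible'' direction never uses $c\neq c_{p,q}$) is sound and matches the paper's own remark following Corollary \ref{sing3} about minimal models.
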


\begin{corollary}
\label{sing3}If Verma module $V(c,h)$ has a weight $3$ singular vector and
$-\alpha+\frac{h-1\pm\sqrt{\left(  h+3\right)  ^{2}-8\beta\left(  h+1\right)
}}{2},-\alpha+h\notin\mathbb{Z}$, module $V_{\alpha,\beta}^{\prime}\otimes
L(c,h) $ is irreducible.
\end{corollary}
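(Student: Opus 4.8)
The plan is to read off Corollary \ref{sing3} directly from Theorem \ref{poly} together with the root computation of Example \ref{s3}. First I would note that having a weight $3$ singular vector means $V(c,h)$ is reducible with degree $m=3$: a singular vector at level $1$ would force $h=0$, a situation already settled by Proposition \ref{j} and Theorem \ref{n}, and degree $2$ is the content of Corollary \ref{sing2}, so only $m=3$ needs treatment here; as in Theorem \ref{poly} we take $c\neq c_{p,q}$. Theorem \ref{poly} then produces a degree $3$ polynomial $p(x)\in\mathbb{Q}(\alpha,\beta,h)[x]$ such that $V_{\alpha,\beta}^{\prime}\otimes L(c,h)$ is reducible if and only if $p$ has an integral root.

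The second step is to quote Example \ref{s3}, which writes the three roots of $p$ explicitly as $-\alpha+h$ and $-\alpha+\frac{h-1}{2}\pm\frac{1}{2}\sqrt{(h+3)^{2}-8\beta(h+1)}$. The hypothesis of the corollary is exactly the assertion that none of these three complex numbers is an integer, so $p$ has no integral root, and Theorem \ref{poly} gives irreducibility of $V_{\alpha,\beta}^{\prime}\otimes L(c,h)$. That completes the argument.

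The only genuinely computational input is Example \ref{s3} itself, i.e.\ determining the polynomial $p$ attached to a weight $3$ singular vector via the reduction (\ref{sing}) in the proof of Theorem \ref{poly}. This uses the explicit form $u=L_{-1}^{3}+q_{1}L_{-1}L_{-2}+q_{2}L_{-3}$ of the level $3$ singular vector in $V(c,h)$ (with $q_{1},q_{2}\in\mathbb{Q}(c,h)$, from \cite{Feigin-Fuchs}), applies $u$ to $v_{n+2}\otimes v$, and rewrites the result modulo $U_{n}+U_{n+1}+U_{n+2}$ as $p(n)\,v_{n-1}\otimes v$. I expect the main obstacle to be bookkeeping — carrying the coefficients $q_{1},q_{2}$ through the commutations that move each term $v_{k}\otimes L_{-j_{r}}\cdots L_{-j_{1}}v$ into the cyclic submodules, exactly as in the inductive step of Theorem \ref{poly} — rather than anything conceptual; once $p(n)$ is assembled, its linear-times-quadratic shape factors into the stated surd by the quadratic formula.
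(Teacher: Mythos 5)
Your argument is exactly the paper's (implicit) one: Corollary \ref{sing3} is stated as an immediate consequence of Theorem \ref{poly} together with the root computation in Example \ref{s3}, which is precisely the route you take, with your remarks on the degenerate low-degree cases being extra care the paper does not even spell out. Correct, same approach.
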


\begin{remark}
When $c=c_{p,q}$, reducible Verma module $V(c,h)$ has two independent singular
vectors. Each of them produces two polynomials as in Theorem \ref{poly}.
$V_{\alpha,\beta}^{\prime}\otimes L(c,h)$ is irreducible unless there is a
common integral root. Proving reducibility directly, like in Theorem
\ref{poly} seems somewhat challenging. However, this will follow from the
existence of intertwining operators, as we will see in the following section.
\end{remark}

\section{Intertwining operators and reducibility of $V_{\alpha,\beta}^{\prime
}\otimes L(c,h)$\label{sec}}

Vertex operator algebras (VOAs) are a fundamental class of algebraic
structures which have arisen in mathematics and physics a few decades ago.
Their importance is supported by their numerous relations with many fields of
algebra, representation theory, topology, differential equations and conformal
field theory. The main original motivation for the introduction of the notion
of VOA arose from the problem of realizing the monster sporadic group as a
symmetry group of a certain infinite-dimensional vector space (\cite{Frenkel -
Lepowsky - Meurman}).

An interested reader should consult \cite{Frenkel - Huang - Lepowsky} or
\cite{Lepowsky-Li} for a detailed approach. Here we present only basic
definitions which should suffice our needs.

For any algebraic expression $z$ we set $\delta\left(  z\right)  =\sum
_{n\in\mathbb{Z}}z^{n}$ provided that this sum makes sense. This is the formal
analogue of the $\delta$-distribution at $z=1$; in particular, $\delta\left(
z\right)  f\left(  z\right)  =\delta\left(  z\right)  f\left(  1\right)  $ for
any $f$ for which these expressions are defined.

\begin{definition}
A \textbf{vertex operator algebra} $\left(  V,Y,\mathbf{1}\right)  $ is a
$\mathbb{Z}$-graded vector space (graded by \textit{weights}) $V=%
{\textstyle\bigoplus\limits_{n\in\mathbb{Z}}}
V_{\left(  n\right)  }$ such that $\dim V_{\left(  n\right)  }<\infty$ for
$n\in\mathbb{Z}$ and $V_{\left(  n\right)  }=0$ for $n$ sufficiently small,
equipped with a linear map $V\otimes V\rightarrow V\left[  \left[
z,z^{-1}\right]  \right]  $, or equivalently,
\begin{align*}
V  &  \rightarrow\left(  \operatorname*{End}V\right)  \left[  \left[
z,z^{-1}\right]  \right] \\
v  &  \mapsto Y\left(  v,z\right)  =\sum_{n\in\mathbb{Z}}v_{n}z^{-n-1}\text{
(where }v_{n}\in\operatorname*{End}V\text{),}%
\end{align*}
$Y\left(  v,z\right)  $ denoting the \textit{vertex operator associated} with
$v$, and equipped with two distinguished homogeneous vectors $\mathbf{1}$ (the
\textit{vacuum}) and $\omega\in V$. The following conditions are assumed for
$u,v\in V:$%
\[
u_{n}v=0\text{ for }n\text{ sufficiently large;}
\]%
\[
Y\left(  \mathbf{1},z\right)  =1\left(  =\operatorname*{id}\nolimits_{V}%
\right)  ;
\]
the \textit{creation property} holds:
\[
Y\left(  v,z\right)  \mathbf{1}\in V\left[  \left[  z\right]  \right]  \text{
and }\lim_{z\rightarrow0}Y\left(  v,z\right)  \mathbf{1}=0
\]
(that is, $Y\left(  v,z\right)  \mathbf{1}$ involves only nonnegative integral
powers of $z$ and the constant term is $v$); the \textit{Jacobi identity}:
\begin{align}
&  z_{0}^{-1}\delta\left(  \frac{z_{1}-z_{2}}{z_{0}}\right)  Y(u,z_{1}%
)Y(v,z_{2})-z_{0}^{-1}\delta\left(  \frac{z_{2}-z_{1}}{-z_{0}}\right)
Y(v,z_{2})Y(u,z_{1})\label{jacobi}\\
&  =z_{2}^{-1}\delta\left(  \frac{z_{1}-z_{0}}{z_{2}}\right)  Y(Y(u,z_{0}%
)v,z_{2});\nonumber
\end{align}
the Virasoro algebra relations:
\[
\left[  L_{m},L_{n}\right]  =\left(  m-n\right)  L_{m+n}+\frac{m^{3}-m}%
{12}\delta_{m+n,0}\left(  \text{rank }V\right)
\]
for $m,n\in\mathbb{Z}$, where
\[
L_{n}=\omega_{n+1}\text{ for }n\in\mathbb{Z}\text{, i.e.\ }Y\left(
\omega,z\right)  =\sum_{n\in\mathbb{Z}}L_{n}z^{-n-2}
\]
and rank~$V\in\mathbb{C}$, $L_{0}v=nv=\left(  \operatorname{wt}v\right)  v$
for $n\in\mathbb{Z}$ and $v\in V_{\left(  n\right)  };$
\[
\frac{d}{dz}Y\left(  v,z\right)  =Y\left(  L_{-1}v,z\right)
\]
(the $L_{-1}$-derivative property).
\end{definition}

\begin{definition}
Given a VOA $\left(  V,Y,\mathbf{1}\right)  ,$ a \textbf{module} $\left(
W,\mathcal{Y}\right)  $ for $V$ is a $\mathbb{Q}$-graded vector space $W=%
{\textstyle\bigoplus\limits_{n\in\mathbb{Q}}}
W_{\left(  n\right)  }$ such that $\dim W_{\left(  n\right)  }<\infty$ for
$n\in\mathbb{Q}$, and $W_{\left(  n\right)  }=0$ for $n$ sufficiently small,
equipped with a linear map $V\otimes W\rightarrow W\left[  \left[
z,z^{-1}\right]  \right]  $, or equivalently,
\begin{align*}
V  &  \rightarrow\left(  \operatorname*{End}W\right)  \left[  \left[
z,z^{-1}\right]  \right] \\
v  &  \mapsto\mathcal{Y}\left(  v,z\right)  =\sum_{n\in\mathbb{Z}}%
v_{n}z^{-n-1}\text{ (where }v_{n}\in\operatorname*{End}W\text{),}%
\end{align*}
$\mathcal{Y}\left(  v,z\right)  $ denoting the \textit{vertex operator
associated} with $v$. The Virasoro algebra relations hold on $W$ with scalar
equal to rank $V$:
\[
\left[  L_{m},L_{n}\right]  =\left(  m-n\right)  L_{m+n}+\frac{m^{3}-m}%
{12}\delta_{m+n,0}\left(  \text{rank }V\right)
\]
for $m,n\in\mathbb{Z}$, where
\[
L_{n}=\omega_{n+1}\text{ for }n\in\mathbb{Z}\text{, i.e.\ }\mathcal{Y}\left(
\omega,z\right)  =\sum_{n\in\mathbb{Z}}L_{n}z^{-n-2}
\]
$L_{0}w=nw$ for $n\in\mathbb{Q}$ and $w\in W_{\left(  n\right)  }.$ For
$u,v\in V$ and $w\in W$ the following properties hold:

\begin{description}
\item[(i)] \textit{Truncation property}: $v_{n}w=0$ for $n$ sufficiently large
and $Y\left(  \mathbf{1},z\right)  =1$;

\item[(ii)] $L_{-1}$-derivative property
\[
\frac{d}{dz}\mathcal{Y}\left(  v,z\right)  =\mathcal{Y}\left(  L_{-1}%
v,z\right)
\]

\item[(iii)] The Jacobi identity
\begin{align*}
&  z_{0}^{-1}\delta\left(  \frac{z_{1}-z_{2}}{z_{0}}\right)  \mathcal{Y}%
(u,z_{1})\mathcal{Y}(v,z_{2})-z_{0}^{-1}\delta\left(  \frac{z_{2}-z_{1}%
}{-z_{0}}\right)  \mathcal{Y}(v,z_{2})\mathcal{Y}(u,z_{1})\\
&  =z_{2}^{-1}\delta\left(  \frac{z_{1}-z_{0}}{z_{2}}\right)  \mathcal{Y}%
(Y(u,z_{0})v,z_{2});
\end{align*}
($Y\left(  u,z_{0}\right)  $ is the operator associated with $V$).
\end{description}
\end{definition}

\begin{definition}
Let $V=(V,Y,\mathbf{1})$ be a vertex operator algebra, and $\left(
W_{i},Y_{i}\right)  $, $i=1,2,3$ three (not necessarily distinct) $V$-modules.
\textbf{Intertwining operator} of type $\binom{W_{3}}{W_{1}\text{\quad}W_{2}}$
is a linear map $W_{1}\otimes W_{2}\rightarrow W_{3}\{z\}=\left\{  \sum_{n\in%
\mathbb{Q}
}u_{n}z^{n}:u_{n}\in W_{k}\right\}  $, or equivalently,
\begin{align*}
W_{1}  &  \rightarrow\left(  \operatorname*{Hom}\left(  W_{2},W_{3}\right)
\right)  \left\{  z\right\} \\
w  &  \mapsto\mathcal{I}\left(  w,z\right)  =\sum_{n\in\mathbb{Q}}%
w_{n}z^{-n-1}\text{, with }w_{n}\in\operatorname*{Hom}\left(  W_{2}%
,W_{3}\right)  .
\end{align*}
For any $v\in V$, $u\in W_{1}$, $w\in W_{2}$ the following conditions are satisfied:

\begin{description}
\item[(i)] \textit{Truncation property} - $u_{n}v=0$ for $n$ sufficiently large;

\item[(ii)] $L_{-1}$\textit{-derivative property} - $\mathcal{I}%
(L_{-1}u,z)=\frac{d}{dz}\mathcal{I}(u,z)$;

\item[(iii)] The \textit{Jacobi identity}
\begin{align*}
&  z_{0}^{-1}\delta\left(  \frac{z_{1}-z_{2}}{z_{0}}\right)  Y_{3}%
(v,z_{1})\mathcal{I}(u,z_{2})w-z_{0}^{-1}\delta\left(  \frac{z_{2}-z_{1}%
}{-z_{0}}\right)  \mathcal{I}(u,z_{2})Y_{2}(v,z_{1})w\\
&  =z_{2}^{-1}\delta\left(  \frac{z_{1}-z_{0}}{z_{2}}\right)  \mathcal{I}%
(Y_{1}(v,z_{0})u,z_{2})w.
\end{align*}

\end{description}
\end{definition}

\bigskip

\bigskip

Now, if $M_{i}$, $i=1,2,3$ are highest weight $\operatorname*{Vir}$-modules of
highest weights $h_{i}$ and central charge $c$, and $\mathcal{I}$ is an
intertwining operator of type $\binom{M_{3}}{M_{1}\text{\quad}M_{2}}$, then
$\mathcal{I}(u,z)=z^{-\alpha}\sum_{n\in\mathbb{Z}}u_{n}z^{-n-1}$ where
$\alpha=h_{1}+h_{2}-h_{3}$ \cite{Frenkel-Zhu}. Suppose there exists such a
nontrivial intertwining operator $\mathcal{I}\ $and suppose $h_{1}\neq0$. Let
$v$ the highest weight vector in $M_{1}$. Equating the coefficient of
$z_{0}^{-1}z_{1}^{-m-1}z_{2}^{-n-1}$ in (\ref{jacobi}) yields
\[
\left[  u_{m},v_{n}\right]  =\sum_{i\geq0}\binom{m}{i}\left(  u_{i}v\right)
_{m+n-i},
\]
and in particular for $u=\omega$ we have
\begin{align*}
\left[  L_{m},v_{n}\right]   &  =\sum_{i\geq0}\binom{m+1}{i}(L_{i-1}%
v)_{m+n-i+1}=\\
&  =(L_{-1}v)_{m+n+1}+(m+1)(L_{0}v)_{m+n}=\\
&  =-(\alpha+n+m+1)v_{m+n}+(m+1)h_{1}v_{m+n}=\\
&  =-(n+\alpha+(1-h_{1})(1+m))v_{m+n},
\end{align*}
hence the components of $\mathcal{I}(v,z)$ span $V_{\alpha,\beta}^{\prime}$,
where $\beta=1-h_{1}$. Moreover, a nontrivial $\operatorname*{Vir}%
$-homomorphism $\Phi$ is defined:
\[
\Phi:V_{\alpha,\beta}^{\prime}\otimes M_{2}\rightarrow M_{3},\qquad\Phi
(v_{n}\otimes v^{\prime})=v_{n}v^{\prime}%
\]
where $v^{\prime}$ is the highest weight vector in $M_{2}$. Comparing the
dimensions of the weight subspaces we conclude that $V_{\alpha,\beta}^{\prime
}\otimes M_{2}$ is reducible.

\begin{theorem}
\label{red}Let $M_{i}$, $i=1,2,3$ be the highest weight $\operatorname*{Vir}%
$-modules with the highest weight $(c,h_{i})$, and $h_{1}\neq0$. Suppose a
nontrivial intertwining operator $I$ of type $\binom{M_{3}}{M_{1}\text{\quad
}M_{2}}$ exists. Then there exists nontrivial $\operatorname*{Vir}%
$-homomorphism $V_{\alpha,\beta}^{\prime}\otimes M_{2}\rightarrow M_{3}$,
where $\alpha=h_{1}+h_{2}-h_{3}$ and $\beta=1-h_{1}$. Consequently,
$V_{\alpha,\beta}^{\prime}\otimes M_{2}$ is reducible.
\end{theorem}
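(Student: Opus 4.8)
The statement is essentially assembled from the computation carried out in the paragraph immediately preceding it, so my plan is to organize that computation into three steps and fill the one small gap it leaves.

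First I would write $\mathcal{I}(v,z)=z^{-\alpha}\sum_{n\in\mathbb{Z}}v_nz^{-n-1}$ with $\alpha=h_1+h_2-h_3$, where $v$ is the highest weight vector of $M_1$ and $v_n\in\operatorname{Hom}(M_2,M_3)$; the fractional shift is forced by matching $L_0$-eigenvalues (\cite{Frenkel-Zhu}). Feeding the conformal vector $\omega$ of the VOA $L(c,0)$ into the Jacobi identity for $\mathcal{I}$ and extracting the coefficient of $z_0^{-1}z_1^{-m-1}z_2^{-n-1}$ yields the commutator formula $[L_m,v_n]=\sum_{i\ge0}\binom{m+1}{i}(L_{i-1}v)_{m+n-i+1}$. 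Since $L_kv=0$ for $k>0$ only the $i=0$ and $i=1$ terms survive, and using the $L_{-1}$-derivative property together with $L_0v=h_1v$ I would obtain
\[
[L_m,v_n]=-(n+\alpha+(1-h_1)(1+m))\,v_{m+n}.
\]
With $\beta=1-h_1$ this is exactly the defining relation of $V_{\alpha,\beta}$ (the central element acting as $0$, since $M_2$ and $M_3$ share the central charge $c$).

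Next I would build the homomorphism. Let $v'$ be the highest weight vector of $M_2$ and define $\Phi\colon V_{\alpha,\beta}\otimes M_2\to M_3$ by $\Phi(v_n\otimes w)=v_nw$. Using the relation above and the tensor-product action from the Preliminaries, $L_m(v_nw)=[L_m,v_n]w+v_n(L_mw)$ shows $\Phi$ is a $\operatorname{Vir}$-homomorphism, and a quick degree count shows it preserves weights. Because $h_1\ne0$ we have $\beta\ne1$, so $V_{\alpha,\beta}$ is reducible only in the case $\beta=0$, $\alpha\in\mathbb{Z}$; after normalizing $\alpha=0$ one has $[L_m,v_0]=0$, hence $v_0\colon M_2\to M_3$ is a $\operatorname{Vir}$-map, and it must vanish since its image would lie in the (zero) weight-$h_2$ space of $M_3$, as $h_3=h_2+1$. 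Thus in all cases $\Phi$ descends to a map $V'_{\alpha,\beta}\otimes M_2\to M_3$, and it is nonzero because $\mathcal{I}$ is nontrivial, so that $v_nv'\ne0$ for some $n$.

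Finally, reducibility follows from a dimension count: by the Preliminaries $V'_{\alpha,\beta}\otimes M_2$ has infinite-dimensional weight subspaces, whereas $M_3$, being a quotient of a Verma module, has finite-dimensional weight subspaces; a weight-preserving nonzero homomorphism into it therefore cannot be injective, so $\ker\Phi$ is a proper nonzero submodule. The only genuinely delicate point is the first step---pinning down the powers of $z_0,z_1,z_2$ and confirming that after imposing $L_kv=0$ for $k>0$ the right-hand side collapses to a scalar multiple of $v_{m+n}$; everything afterward is formal, the one minor extra bit of care being the degenerate case $\beta=0$, $\alpha\in\mathbb{Z}$ where one must check that $\Phi$ truly factors through the primed module.
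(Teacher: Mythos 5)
Your proof is correct and follows essentially the same route as the paper: extracting the commutator formula $[L_m,v_n]=-(n+\alpha+(1-h_1)(1+m))v_{m+n}$ from the Jacobi identity with $u=\omega$, identifying the components of $\mathcal{I}(v,z)$ with $V_{\alpha,\beta}$ for $\beta=1-h_1$, defining $\Phi(v_n\otimes v')=v_nv'$, and deducing reducibility by comparing the infinite-dimensional weight subspaces of $V_{\alpha,\beta}'\otimes M_2$ with the finite-dimensional ones of $M_3$. Your additional check that $v_0$ annihilates $M_2$ in the degenerate case $\beta=0$, $\alpha\in\mathbb{Z}$, so that $\Phi$ genuinely descends to the quotient $V_{\alpha,\beta}'\otimes M_2$, is a minor point the paper leaves implicit, and it is handled correctly.
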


\subsection{Minimal models\label{min}}

$\operatorname*{Vir}$-module $L(c,0)$ admits an irreducible VOA structure. For
$c=c_{p,q}=1-6\frac{(p-q)^{2}}{pq},$ where $p,q>1$ are relatively prime, this
algebra is rational (\cite{Frenkel-Zhu}) i.e.\ it has only finitely many
irreducible modules and every finitely generated module is a direct sum of
irreducibles. Let
\begin{equation}
h_{m,n}=\frac{\left(  np-mq\right)  ^{2}-\left(  p-q\right)  ^{2}}%
{4pq},0<m<p,0<n<q. \label{h min}%
\end{equation}
Then $L(c_{p,q},h_{m,n})$ is a module over VOA $L(c_{p,q},0)$ called a
\textbf{minimal model}. Wang has shown in \cite{Wang} that minimal models are
all irreducibles for VOA $L(c_{p,q},0)$.

Using fusion rules (\cite{Frenkel-Zhu}, \cite{Wang}) we have the complete list
of intertwining operators for VOA $L(c_{p,q},0)$.

An ordered triple of pairs of integers $((m,n),\allowbreak(m^{\prime
},n^{\prime}),\allowbreak(m^{\prime\prime},n^{\prime\prime}))$ is
\textbf{admissible} if $0<m_{1},m_{2},m_{2}<p$, $0<n_{1},n_{2},n_{3}<q$,
$m_{1}+m_{2}+m_{3}<2p$, $n_{1}+n_{2}+n_{3}<2q$, $m_{1}<m_{2}+m_{3}$,
$m_{2}<m_{1}+m_{3}$, $m_{3}<m_{1}+m_{2}$, $n_{1}<n_{2}+n_{3}$, $n_{2}%
<n_{1}+n_{3}$, $n_{3}<n_{1}+n_{2}$ and the sums $m_{1}+m_{2}+m_{3}$ and
$n_{1}+n_{2}+n_{3}$ are odd. We identify the triples $((m_{1},n_{1}%
),\allowbreak(m_{2},n_{2}),\allowbreak(m_{3},n_{3}))$ and $((m_{1}%
,n_{1}),\allowbreak(p-m_{2},q-n_{2}),\allowbreak(p-m_{3},q-n_{3}))$.

Let $c=c_{p,q}$. A nontrivial intertwining operator of type $\binom
{L(c,h_{m^{\prime\prime},n^{\prime\prime}})}{L(c,h_{m,n})\text{\quad
}L(c,h_{m^{\prime},n^{\prime}})}$ exists if and only if $((m,n),(m^{\prime
},n^{\prime}),(m^{\prime\prime},n^{\prime\prime}))$ is an admissible triple of
pairs. (See \cite{Wang}).

If $h=h_{m,n}$ as in (\ref{h min}), the following intertwining operators
always exist:
\begin{align*}
&  \binom{L(c,h)}{L(c,0)\text{\quad}L(c,h)}\text{ (module }L(c,h)\text{)}\\
&  \binom{L(c,h)}{L(c,h)\text{\quad}L(c,0)}\text{ (transposed operator)}\\
&  \binom{L(c,0)}{L(c,h)\text{\quad}L(c,h)}\text{ (adjoint operator)}%
\end{align*}

\begin{example}
[Yang-Lee model]$L(-\frac{22}{5},0)$ and $L(-\frac{22}{5},-\frac{1}{5})$ are
the only irreducibles over VOA$\ L(-\frac{22}{5},0)$. Operator of type
$\binom{L(-\frac{22}{5},-\frac{1}{5})}{L(-\frac{22}{5},-\frac{1}%
{5})\text{\quad}L(-\frac{22}{5},-\frac{1}{5})}$ completes the list of
intertwining operators for $L(-\frac{22}{5},0)$.
\end{example}

\begin{example}
[Ising model]$L(\frac{1}{2},0)$, $L(\frac{1}{2},\frac{1}{16})$ and $L(\frac
{1}{2},\frac{1}{2})$ are the only irreducibles over $L(\frac{1}{2},0)$.
Operators of type $\binom{L(\frac{1}{2},\frac{1}{16})}{L(\frac{1}{2},\frac
{1}{2})\text{\quad}L(\frac{1}{2},\frac{1}{16})}$, $\binom{L(\frac{1}{2}%
,\frac{1}{16})}{L(\frac{1}{2},\frac{1}{16})\text{\quad}L(\frac{1}{2},\frac
{1}{2})}$ and $\binom{L(\frac{1}{2},\frac{1}{2})}{L(\frac{1}{2},\frac{1}%
{16})\text{\quad}L(\frac{1}{2},\frac{1}{16})}$ complete the list of
intertwining operators for $c=\frac{1}{2}$.
\end{example}

\begin{corollary}
\label{hom}There are nontrivial $\operatorname*{Vir}$-homomorphisms:\newline%
$\left(  V_{0,\frac{6}{5}}^{\prime}\otimes L(-\frac{22}{5},0)\right)
\rightarrow L(-\frac{22}{5},-\frac{1}{5})$,\newline$\left(  V_{-\frac{2}%
{5},\frac{6}{5}}^{\prime}\otimes L(-\frac{22}{5},-\frac{1}{5})\right)
\rightarrow L(-\frac{22}{5},0)$, $\left(  V_{-\frac{1}{5},\frac{6}{5}}%
^{\prime}\otimes L(-\frac{22}{5},-\frac{1}{5})\right)  \rightarrow
L(-\frac{22}{5},-\frac{1}{5})$;\newline$\left(  V_{0,\frac{1}{2}}^{\prime
}\otimes L(\frac{1}{2},0)\right)  \rightarrow L(\frac{1}{2},\frac{1}{2})$,
$\left(  V_{0,\frac{15}{16}}^{\prime}\otimes L(\frac{1}{2},0)\right)
\rightarrow L(\frac{1}{2},\frac{1}{16})$,\newline$\left(  V_{0,\frac{1}{2}%
}^{\prime}\otimes L(\frac{1}{2},\frac{1}{2})\right)  \rightarrow L(\frac{1}%
{2},0)$, $\left(  V_{\frac{1}{2}.\frac{15}{16}}^{\prime}\otimes L(\frac{1}%
{2},\frac{1}{2})\right)  \rightarrow L(\frac{1}{2},\frac{1}{16})$%
,\newline$\left(  V_{\frac{1}{8},\frac{15}{16}}^{\prime}\otimes L(\frac{1}%
{2},\frac{1}{16})\right)  \rightarrow L(\frac{1}{2},0)$, $\left(  V_{\frac
{1}{2},\frac{1}{2}}^{\prime}\otimes L(\frac{1}{2},\frac{1}{16})\right)
\rightarrow L(\frac{1}{2},\frac{1}{16})$,\newline$\left(  V_{-\frac{3}%
{8},\frac{15}{16}}^{\prime}\otimes L(\frac{1}{2},\frac{1}{16})\right)
\rightarrow L(\frac{1}{2},\frac{1}{2})$.
\end{corollary}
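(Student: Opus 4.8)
The plan is to apply Theorem \ref{red} once for each of the ten listed homomorphisms. Recall that Theorem \ref{red}, given a nontrivial intertwining operator of type $\binom{M_3}{M_1\ M_2}$ with each $M_i$ a highest weight module of highest weight $(c,h_i)$ and $h_1\neq 0$, produces a nontrivial $\operatorname*{Vir}$-homomorphism $V_{\alpha,\beta}^{\prime}\otimes M_2\to M_3$ with $\alpha=h_1+h_2-h_3$ and $\beta=1-h_1$. I will also use $V_{\alpha,\beta}^{\prime}\cong V_{\alpha+k,\beta}^{\prime}$, $k\in\mathbb{Z}$, to replace $\alpha$ by the representative displayed in the statement. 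So the task reduces to matching each homomorphism with one of the intertwining operators recorded above (the three that always exist for a minimal model, together with those from the Yang--Lee and Ising examples) and checking $h_1\neq 0$.

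For $c=c_{2,5}=-22/5$ one has $h\in\{0,-1/5\}$, and $\beta=1-h_1=6/5$ whenever $h_1=-1/5\neq 0$. The transposed operator $\binom{L(c,-1/5)}{L(c,-1/5)\ L(c,0)}$ gives $\alpha=-1/5+0-(-1/5)=0$; the adjoint operator $\binom{L(c,0)}{L(c,-1/5)\ L(c,-1/5)}$ gives $\alpha=-1/5-1/5-0=-2/5$; and the operator $\binom{L(c,-1/5)}{L(c,-1/5)\ L(c,-1/5)}$ from the Yang--Lee example gives $\alpha=-1/5-1/5-(-1/5)=-1/5$. These yield the first three homomorphisms.

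For $c=c_{3,4}=1/2$ one has $h\in\{0,1/16,1/2\}$, with $\beta=1-h_1$ equal to $1/2$ when $h_1=1/2$ and to $15/16$ when $h_1=1/16$. The transposed operators $\binom{L(c,1/2)}{L(c,1/2)\ L(c,0)}$ and $\binom{L(c,1/16)}{L(c,1/16)\ L(c,0)}$ give $\alpha=0$ (fourth and fifth homomorphisms); the adjoint operators $\binom{L(c,0)}{L(c,1/2)\ L(c,1/2)}$ and $\binom{L(c,0)}{L(c,1/16)\ L(c,1/16)}$ give $\alpha=1$ (which is $\equiv 0$ modulo $\mathbb{Z}$) and $\alpha=1/8$ (sixth and eighth); and the three operators from the Ising example, $\binom{L(1/2,1/16)}{L(1/2,1/2)\ L(1/2,1/16)}$, $\binom{L(1/2,1/16)}{L(1/2,1/16)\ L(1/2,1/2)}$, $\binom{L(1/2,1/2)}{L(1/2,1/16)\ L(1/2,1/16)}$, give $\alpha=1/2$, $1/2$, and $-3/8$ (ninth, seventh, and tenth). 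In every case the module in the first slot is $L(c,1/16)$ or $L(c,1/2)$, so $h_1\neq 0$ and Theorem \ref{red} applies.

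There is no genuine obstacle here; the proof is a finite table lookup built on Theorem \ref{red} and the fusion-rule data already recorded. The only points requiring care are reading off which module occupies the first slot of each fusion rule (it is the one whose vertex operator components span $V_{\alpha,\beta}^{\prime}$, so that $\beta=1-h_1$), verifying $h_1\neq 0$ so that Theorem \ref{red} is applicable, and performing the short arithmetic that produces each $(\alpha,\beta)$ from $(h_1,h_2,h_3)$ together with the normalization $\alpha\mapsto\alpha+k$.
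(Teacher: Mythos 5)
Your proposal is correct and follows exactly the paper's route: the paper proves the corollary "directly from Theorem \ref{red}," i.e.\ by the same table lookup you perform, matching each listed homomorphism to an intertwining operator from the fusion-rule data and computing $\alpha=h_1+h_2-h_3$, $\beta=1-h_1$ (with $\alpha$ adjusted modulo $\mathbb{Z}$ where needed). All ten of your $(\alpha,\beta)$ computations check out.
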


\begin{proof}
Directly from Theorem \ref{red}.
\end{proof}

\begin{remark}
\label{n2}Let $c\neq c_{p,q}$ and $h\neq0$. Then $V(c,h)$ is a module over VOA
$L(c,0)$ so there exists a transposed intertwining operator of type
$\binom{V(c,h)}{V(c,h)\text{\quad}L(c,0)}$. Therefore, a nontrivial
$\operatorname*{Vir}$-epimorphism $V_{\alpha,\beta}^{\prime}\otimes
L(c,0)\rightarrow V(c,h)$ exists which proves Theorem \ref{n}. However,
$V(c_{p,q},h)$ is not $L(c_{p,q},0)$-module.
\end{remark}

\bigskip Next we consider irreducibility of $V_{\alpha,\beta}^{\prime}\otimes
L(c_{p,q},h_{m,n})$, where $c=-\frac{22}{5},\frac{1}{2}$. We take advantage of
the fact that $V(c_{p,q},h_{m,n})$ has two singular vectors that we can use
independently. This way we prove irreducibility for all pairs $(\alpha,\beta)$
except those laying on intersection of two algebraic curves. These exceptions
are precisely those listed in Corollary \ref{hom}.

\begin{proposition}
\label{p1}Module $V_{\alpha,\beta}^{\prime}\otimes L(-\frac{22}{5},0)$ is
irreducible if and only if $(\alpha,\beta)\neq(0,\frac{6}{5})$. Moreover,
\[
\left(  V_{0,\frac{6}{5}}^{\prime}\otimes L(-\frac{22}{5},0)\right)
/U_{0}\cong L(-\frac{22}{5},-\frac{1}{5}).
\]

\end{proposition}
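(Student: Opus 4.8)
The plan is to combine the singular-vector machinery of Theorem \ref{poly} with the homomorphism coming from the intertwining operator (Corollary \ref{hom}) and Wang's classification of minimal models. For $c=c_{2,5}=-\frac{22}{5}$ the only admissible labels are $(1,1)\leftrightarrow h=0$ and $(1,2)\leftrightarrow h=-\frac15$, so $L(-\frac{22}{5},0)$ has a Verma cover whose maximal submodule is generated by two singular vectors. One of them is always $L_{-1}v$, which by Proposition \ref{j} forces irreducibility whenever $\alpha\notin\mathbb{Z}$; hence it remains to treat $\alpha\in\mathbb{Z}$, where we may take $\alpha=0$. The second singular vector in $V(-\frac{22}{5},0)$ sits at level $2$ (since $c_{2,5}$ is the $c$ of Example \ref{s2} with $h=0$: indeed $\frac{10h-16h^2}{1+2h}=0$ at $h=0$, but the correct statement is that $V(c_{2,5},0)$ has a degree-$2$ singular vector of the form $(L_{-1}^2-\tfrac23 b\,L_{-2})v$ for the appropriate $b$; I would pin down $b$ by requiring $\operatorname{Vir}_+$-annihilation, and in fact the level-$2$ singular vector of $V(c_{p,q},0)$ other than $L_{-1}v$ comes from $h_{1,1}=0$ having the companion relation at level $(p-1)(q-1)$-type... more simply, one verifies directly that modulo $L_{-1}v$ the relevant level-$2$ vector is $s_2v$ with $b$ chosen so the computation in Example \ref{s2} applies).

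Given that second singular vector, Example \ref{s2}/Corollary \ref{sing2} produce a degree-$2$ polynomial $p(x)$ whose integral roots control reducibility: with $\alpha=0$, $h=0$, the roots are $\tfrac{-1}{6}\pm\tfrac16\sqrt{25-24\beta}$ (specialising $4h+5=5$, $2h+1=1$ in Example \ref{s2}). First I would check that, after reducing $\alpha$ mod $\mathbb{Z}$ and using both singular vectors, the full reducibility locus inside $\{\alpha\in\mathbb{Z}\}$ is exactly $\beta=\tfrac65$ (one of the two roots equals $0$ precisely when $25-24\beta=1$, i.e. $\beta=1$, giving root $0$ — wait, that is $\beta=1$, an excluded value — so the genuine integral root arises from the other sign: $\tfrac{-1}{6}-\tfrac16\sqrt{25-24\beta}\in\mathbb{Z}$, solved by $\sqrt{25-24\beta}=5$ i.e. $\beta=0$ or $\sqrt{25-24\beta}=-5+6k$; the value $\beta=\tfrac65$ corresponds to $25-24\beta=25-\tfrac{144}{5}$, which is negative, so in fact the operative mechanism here is the combination with the $L_{-1}v$ relation). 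The cleanest route, and the one I would actually take, is: Theorem \ref{poly} (applied with the level-$2$ singular vector, treating $c\ne c_{p,q}$ formally — but since $c=c_{2,5}$ I instead invoke the Remark after Corollary \ref{sing3}) shows $V_{\alpha,\beta}^{\prime}\otimes L(-\frac{22}{5},0)$ is irreducible unless the two polynomials coming from the two singular vectors share an integral root; I would compute both polynomials explicitly and show their common integral root exists iff $(\alpha,\beta)=(0,\tfrac65)$.

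For the reducibility direction at $(\alpha,\beta)=(0,\tfrac65)$: Corollary \ref{hom} gives a nontrivial $\operatorname{Vir}$-homomorphism $\Phi:V_{0,6/5}^{\prime}\otimes L(-\frac{22}{5},0)\to L(-\frac{22}{5},-\frac15)$, which is visibly not injective (the weight multiplicities of the source are infinite, those of the target finite), so the module is reducible. This simultaneously proves the "only if" clause of the irreducibility statement. For the isomorphism $\left(V_{0,6/5}^{\prime}\otimes L(-\frac{22}{5},0)\right)/U_0\cong L(-\frac{22}{5},-\frac15)$, I would argue as follows: by the chain-of-inclusions analysis preceding Theorem \ref{n} (which applies since $L_{-1}v=0$ in $L(c,0)$), with $\alpha=0$ and $\beta=\tfrac65\ne 0,1$ one has $\cdots=U_{-1}\supseteq U_0=U_1=\cdots$, so $U_0$ is the unique maximal proper submodule candidate and $V/U_0$ is, by Lemma \ref{kvoc}, a highest weight module of highest weight $h-\alpha-\beta-0 = -\tfrac65$... here I need the shift: $U_{-1}/U_0$ has highest weight $h-\alpha-\beta-(-1)+1$? — I would recompute via $L_0(v_0\otimes v)=(h-\alpha-\beta)(v_0\otimes v)=-\tfrac65(v_0\otimes v)$, but the relevant generator of $V/U_0$ is $v_{-1}\otimes v$ with weight $h-\alpha-\beta+1=-\tfrac15$, matching $h_{1,2}=-\tfrac15$. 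So $V/U_0$ is a highest weight quotient of $V(-\frac{22}{5},-\frac15)$; since $\Phi$ descends to a nonzero map $V/U_0\to L(-\frac{22}{5},-\frac15)$ and $L(-\frac{22}{5},-\frac15)$ is irreducible, it suffices to show $V/U_0$ is itself irreducible, equivalently (by Corollary \ref{f1} and the inclusion chain) that $U_{-1}=V$ — i.e. that $V$ is cyclic on $v_{-1}\otimes v$ — and that $V/U_0$ has no further singular vector, which I would get by noting any proper submodule of $V/U_0$ would contradict $U_{-1}=V$ together with the fact that $V/U_0$ is generated by its highest weight line. The main obstacle is the bookkeeping of the two singular vectors of $V(c_{2,5},0)$ and verifying that $\beta=\tfrac65$ is genuinely the only integral-root case — i.e. carefully matching the polynomial-root computation of Theorem \ref{poly} with the "both singular vectors" refinement — and confirming $V/U_0$ is irreducible rather than merely having $L(-\frac{22}{5},-\frac15)$ as a quotient; the VOA homomorphism $\Phi$ does the heavy lifting for the latter, but I must rule out $\ker\Phi$ being strictly larger than the maximal submodule of $V(-\frac{22}{5},-\frac15)$ pulled back to $V/U_0$.
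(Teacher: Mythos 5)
Your proposal has two genuine gaps, both at the points where the paper does explicit singular-vector computations. First, the computational input for the irreducibility direction is misidentified: $V(-\frac{22}{5},0)$ has its singular vectors at levels $1$ (namely $L_{-1}v$) and $4$, not at level $2$. Example \ref{s2} with $h=0$ gives $c=0$, not $c_{2,5}$, so the degree-two polynomial with roots $-\frac{1}{6}\pm\frac{1}{6}\sqrt{25-24\beta}$ has nothing to do with this module --- as you yourself observe, it never produces $\beta=\frac{6}{5}$, and you leave that contradiction unresolved instead of repairing it. The paper instead uses the level-$4$ relation $s=L_{-2}^{2}-\frac{3}{5}L_{-4}$, $sv=0$ in $L(-\frac{22}{5},0)$, and the explicit identities $s(v_{3}\otimes v)+2(3-\beta)L_{-2}(v_{1}\otimes v)=(\beta-\frac{6}{5})(1-\beta)\,v_{-1}\otimes v$ and $s(v_{2}\otimes v)-2(\beta-2)L_{-2}(v_{0}\otimes v)=(\frac{6}{5}-\beta)(\beta+1)\,v_{-2}\otimes v$, which give $U_{1}\subseteq U_{-1}$ and $U_{0}\subseteq U_{-2}$ for every $\beta\neq\frac{6}{5}$; combined with the chain $\cdots=U_{-1}\supseteq U_{0}=U_{1}=\cdots$ this proves irreducibility for $\alpha\in\mathbb{Z}$, $\beta\neq\frac{6}{5}$. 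Note also that Theorem \ref{poly} is stated only for $c\neq c_{p,q}$, and the Remark you invoke asserts rather than proves the two-polynomial criterion; in any case you never compute the correct polynomial, so the ``if'' direction is not established in your proposal.

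Second, the isomorphism $\left(V_{0,\frac{6}{5}}^{\prime}\otimes L(-\frac{22}{5},0)\right)/U_{0}\cong L(-\frac{22}{5},-\frac{1}{5})$ needs more than you supply. Reducibility at $(0,\frac{6}{5})$ via Corollary \ref{hom}, and the identification of $V/U_{0}$ as a highest weight module of weight $-\frac{1}{5}$ via Lemma \ref{kvoc}, both match the paper. But irreducibility of $V/U_{0}$ does not follow from ``any proper submodule of $V/U_{0}$ would contradict $U_{-1}=V$'': Corollary \ref{f1} only says that every nontrivial submodule of $V$ contains some $U_{n}$, hence contains $U_{0}$; it does not exclude submodules strictly between $U_{0}$ and $V$, and a highest weight module generated by its highest weight line can of course be reducible (a Verma module is). You correctly flag that $\ker\Phi$ could a priori be larger than $U_{0}$, but you give no mechanism to rule this out. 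The paper closes exactly this gap by direct computation: it checks that the level-$2$ and level-$3$ singular vectors of $V(-\frac{22}{5},-\frac{1}{5})$ applied to $v_{-1}\otimes v$ land in $U_{0}$, e.g.\ $(L_{-1}^{2}-\frac{2}{5}L_{-2})(v_{-1}\otimes v)=-s(v_{1}\otimes v)\in U_{1}=U_{0}$, so $V/U_{0}$ is a nonzero quotient of $L(-\frac{22}{5},-\frac{1}{5})$ and hence isomorphic to it. Without these two computations (the level-$4$ relation for irreducibility, and the singular-vector check for the quotient), your argument does not go through.
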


\begin{proof}
If $\alpha\notin\mathbb{Z}$, this is a special case of Proposition \ref{j}.
Let $\alpha=0$ and let $s=L_{-2}^{2}-\frac{3}{5}L_{-4}$. Then $sv=0$ if $v$ is
the highest weight vector in $L(-\frac{22}{5},0)$. In the discussion
preceeding Theorem \ref{n} we noted that
\[
\cdots=U_{-2}=U_{-1}\supseteq U_{0}=U_{1}=\cdots
\]
It remains to show that $U_{-1}\subseteq U_{0}$ (or $U_{-2}\subseteq U_{1}$ if
$\beta=0,1$). We have
\[
s(v_{3}\otimes v)+2(3-\beta)L_{-2}(v_{1}\otimes v)=\left(  \beta-\frac{6}%
{5}\right)  (1-\beta)v_{-1}\otimes v.
\]
Since $v_{3}\otimes v\in U_{3}=U_{1}$ we obtain $U_{1}\subseteq U_{-1}$ for
$\beta\neq1,\frac{6}{5}$. On the other hand, for $\beta\neq0$ we have
\[
s(v_{2}\otimes v)-2(\beta-2)L_{-2}(v_{0}\otimes v)=\left(  \frac{6}{5}%
-\beta\right)  (\beta+1)v_{-2}\otimes v,
\]
proving that $U_{0}\subseteq U_{-2}$, for $\beta\neq-1,0,\frac{6}{5}$. This
proves irreducibility when $\beta\neq\frac{6}{5}$.

Since $V_{0,\frac{6}{5}}^{\prime}\otimes L(-\frac{22}{5},0)$ is reducible by
Corollary \ref{hom}, then by Theorem \ref{main} we must have $U_{0}\neq
U_{-1}$. From lema \ref{kvoc} we know that $\left(  V_{0,\frac{6}{5}}^{\prime
}\otimes L(-\frac{22}{5},0)\right)  /U_{0}$ is isomorphic to some quotient of
$V(-\frac{22}{5},-\frac{1}{5})$. Next we check relations for singular vectors
in $V(-\frac{22}{5},-\frac{1}{5})$. We need to show that
\[
(L_{-1}^{2}-\frac{2}{5}L_{-2})(v_{-1}\otimes v)\in U_{0}
\]%
\[
(L_{-1}^{3}-\frac{8}{5}L_{-2}L_{-1}-\frac{4}{25}L_{-3})(v_{-1}\otimes v)\in
U_{0}
\]
By direct computation we see that
\[
(L_{-1}^{2}-\frac{2}{5}L_{-2})(v_{-1}\otimes v)=-s(v_{1}\otimes v)\in
U_{1}=U_{0}
\]
and
\[
(L_{-1}^{3}-\frac{8}{5}L_{-2}L_{-1}-\frac{4}{25}L_{-3})(v_{-1}\otimes v)=
\]%
\[
=-\frac{75}{2}(L_{-1}^{2}-\frac{2}{5}L_{-2})(v_{0}\otimes v)-25L_{-1}%
(L_{-1}^{2}-\frac{2}{5}L_{-2})(v_{1}\otimes v)\in U_{0}
\]
This shows irreducibility of $\left(  V_{0,\frac{6}{5}}^{\prime}\otimes
L(-\frac{22}{5},0)\right)  /U_{0}$ and completes the proof.
\end{proof}

\begin{proposition}
\label{p2}$V_{\alpha,\beta}^{\prime}\otimes L(\frac{1}{2},0)$ is irreducible
if and only if $(\alpha,\beta)\neq(0,\frac{1}{2}),(0,\frac{15}{16})$.
Moreover
\begin{align*}
\left(  V_{0,\frac{1}{2}}^{\prime}\otimes L(\frac{1}{2},0)\right)  /U_{0}  &
\cong L(\frac{1}{2},\frac{1}{2}),\\
\left(  V_{0,\frac{15}{16}}^{\prime}\otimes L(\frac{1}{2},0)\right)  /U_{0}
&  \cong L(\frac{1}{2},\frac{1}{16}).
\end{align*}

\end{proposition}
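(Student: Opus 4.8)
The plan is to follow the argument of Proposition \ref{p1}, the only new features being that the exceptional locus is now the pair $\beta\in\{1/2,15/16\}$ rather than a single value, and that the second singular vector of $V(1/2,0)$ sits at level $(p-1)(q-1)=6$ (since $1/2=c_{3,4}$) rather than at level $4$. First I would discard the case $\alpha\notin\mathbb{Z}$ by Proposition \ref{j}, so assume $\alpha\in\mathbb{Z}$ and, using invariance of $\alpha$ modulo $\mathbb{Z}$, take $\alpha=0$. Since $L_{-1}v=0$ in $L(1/2,0)$, the identities $L_{-1}(v_{n}\otimes v)=-n(v_{n-1}\otimes v)$, $L_{1}(v_{n}\otimes v)=-(n+2\beta)(v_{n+1}\otimes v)$ and $L_{-1}L_{2}(v_{n}\otimes v)=(n+2)\beta(v_{n+2}\otimes v)$ give, exactly as in the discussion preceding Theorem \ref{n}, the chains
\[
\cdots=U_{-1}\supseteq U_{0}=U_{1}=\cdots\ (\beta\neq0,1),\qquad\cdots=U_{-1}\supseteq U_{1}=\cdots\ (\beta=0),\qquad\cdots=U_{-2}\supseteq U_{0}=\cdots\ (\beta=1).
\]
In every case $V:=V_{\alpha,\beta}^{\prime}\otimes L(1/2,0)$ equals $U_{-1}$ (resp.\ $U_{-2}$ when $\beta=1$), so it remains only to collapse the chain, i.e.\ to prove $U_{-1}\subseteq U_{0}$ (resp.\ $U_{-1}\subseteq U_{1}$ when $\beta=0$, and $U_{-2}\subseteq U_{0}$ when $\beta=1$).

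To that end I would take the level-$6$ singular vector of $V(1/2,0)$ and, because $L_{-1}v=0$ in $L(1/2,0)$, reduce it modulo $U(\operatorname*{Vir}_{-})L_{-1}$ to a vector $\tilde{s}=aL_{-6}+bL_{-4}L_{-2}+dL_{-3}^{2}+eL_{-2}^{3}$ annihilating the highest weight vector, the constants $a,b,d,e$ being determined by $c=1/2$ (computing them, or taking the singular vector from the literature, is the one explicit input needed). Expanding $\tilde{s}(v_{n}\otimes v)$ and rewriting each mixed component $v_{n-j}\otimes(\text{monomial in the }L_{-i})v$ as an element of $U_{n-j}+\mathbb{C}(v_{n-6}\otimes v)$ — exactly as in equation (\ref{sing}) in the proof of Theorem \ref{poly} and as in Proposition \ref{p1} — produces, for each $n$, a relation
\[
\tilde{s}(v_{n}\otimes v)+w_{n}=P(n,\beta)\,(v_{n-6}\otimes v),\qquad w_{n}\in U_{n-2}+U_{n-3}+U_{n-4},
\]
with $P(n,\cdot)$ a polynomial of degree $\leq3$ in $\beta$. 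Applying this with $n=5$ (so $v_{n-6}=v_{-1}$) and with $n=4$ (so $v_{n-6}=v_{-2}$), the left side lies in $U_{0}$ for $\beta\neq0,1$, in $U_{1}$ for $\beta=0$, and in $U_{0}$ for $\beta=1$, so the relevant chain collapses as soon as the corresponding value $P(5,\beta)$ or $P(4,\beta)$ is nonzero. Computing $P(5,\cdot)$ and $P(4,\cdot)$ explicitly, the claim — to be verified by a direct calculation — is that the only values of $\beta$ at which neither relation collapses the appropriate chain are $\beta=1/2$ and $\beta=15/16$ (this parallels Proposition \ref{p1}, where the analogue of $P(3,\cdot)$ is $(\beta-6/5)(1-\beta)$ and its spurious root $\beta=1$ is dealt with by the second relation). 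Therefore $V_{\alpha,\beta}^{\prime}\otimes L(1/2,0)$ is irreducible whenever $\alpha\in\mathbb{Z}$ and $\beta\notin\{1/2,15/16\}$, while reducibility of $V_{0,1/2}^{\prime}\otimes L(1/2,0)$ and $V_{0,15/16}^{\prime}\otimes L(1/2,0)$ is immediate from Corollary \ref{hom} together with Theorem \ref{main}.

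It remains to identify the quotients by $U_{0}$ in the two reducible cases. Since $\beta\in\{1/2,15/16\}$ is neither $0$ nor $1$, the chain reads $\cdots=U_{-1}\supsetneq U_{0}=U_{1}=\cdots$, so $V=U_{-1}$ and, by Lemma \ref{kvoc}, $V/U_{0}=U_{-1}/U_{0}$ is a highest weight module of highest weight $h-\alpha-\beta+1=1-\beta$, equal to $1/2$ when $\beta=1/2$ and to $1/16$ when $\beta=15/16$, cyclically generated by $v_{-1}\otimes v+U_{0}$. To conclude that these are $L(1/2,1/2)$ and $L(1/2,1/16)$ one checks that the two independent singular vectors of $V(1/2,1/2)=V(c_{3,4},h_{2,1})$, located at levels $2$ and $3$ (the level-$2$ one being $(L_{-1}^{2}-\frac{4}{3}L_{-2})v$ by Example \ref{s2}), resp.\ of $V(1/2,1/16)=V(c_{3,4},h_{1,2})$, located at levels $2$ and $4$ (level $2$: $(L_{-1}^{2}-\frac{3}{4}L_{-2})v$), annihilate $v_{-1}\otimes v$ modulo $U_{0}$; this is the same kind of computation as at the end of Proposition \ref{p1}, rewriting each $s_{k}(v_{-1}\otimes v)$ as a $U(\operatorname*{Vir})$-combination of terms $\tilde{s}(v_{j}\otimes v)$ with $j\geq1$ (which lie in $U_{j}=U_{0}$) and $L_{-i}(v_{j}\otimes v)$ with $j\geq0$ (which lie in $U_{0}$). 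Conceptually nothing beyond Proposition \ref{p1} is needed; the main obstacle is the bookkeeping — obtaining the level-$6$ singular vector of $V(1/2,0)$, computing $P(4,\cdot)$ and $P(5,\cdot)$ and confirming that their root set relevant to the chain is exactly $\{1/2,15/16\}$, and carrying out the two quotient verifications.
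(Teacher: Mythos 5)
Your proposal follows essentially the same route as the paper: reduce to $\alpha=0$, apply the level-$6$ singular vector of $V(\tfrac12,0)$ (the paper's $s'=64L_{-2}^{3}+93L_{-3}^{2}-264L_{-4}L_{-2}-108L_{-6}$) at $v_{5}\otimes v$ and $v_{4}\otimes v$ to collapse the chain of the $U_{n}$ unless $\beta\in\{\tfrac12,\tfrac{15}{16}\}$, get reducibility in those two cases from Corollary \ref{hom}, and identify the quotients via Lemma \ref{kvoc} together with checking the singular-vector relations of $V(\tfrac12,\tfrac12)$ (levels $2,3$) and $V(\tfrac12,\tfrac1{16})$ (levels $2,4$) modulo $U_{0}$. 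The computations you defer are exactly the ones the paper records, and they come out as you predict: the two relations yield the factors $-2(\beta-1)\left(\beta-\tfrac12\right)\left(\beta-\tfrac{15}{16}\right)$ and $2(\beta+2)\left(\beta-\tfrac12\right)\left(\beta-\tfrac{15}{16}\right)$, so the common obstruction is precisely $\beta\in\{\tfrac12,\tfrac{15}{16}\}$.
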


\begin{proof}
Again, we may assume $\alpha=0$, and need to check that $U_{-2}\subseteq
U_{1}$.

Let $s^{\prime}=64L_{-2}^{3}+93L_{-3}^{2}-264L_{-4}L_{-2}-108L_{-6}$. Then
$s^{\prime}v=0$ in $L\left(  \frac{1}{2},0\right)  $. The relations
\begin{gather*}
s^{\prime}(v_{5}\otimes v)+192(5-\beta)L_{-2}^{2}(v_{3}\otimes v)-264(5-\beta
)L_{-4}(v_{3}\otimes v)+\\
+186(5-2\beta)L_{-3}(v_{2}\otimes v)-264(5-3\beta)L_{-2}(v_{1}\otimes v)+\\
+192(5-\beta)(3-\beta)L_{-2}(v_{1}\otimes v)=-2(\beta-1)\left(  \beta-\frac
{1}{2}\right)  \left(  \beta-\frac{15}{16}\right)  (v_{-1}\otimes v)
\end{gather*}
and
\begin{gather*}
s^{\prime}(v_{4}\otimes v)+192(4-\beta)L_{-2}^{2}(v_{2}\otimes v)-264(4-\beta
)L_{-4}(v_{2}\otimes v)+\\
+186(4-2\beta)L_{-3}(v_{1}\otimes v)-264(4-3\beta)L_{-1}(v_{0}\otimes v)+\\
+192(4-\beta)(2-\beta)L_{-1}(v_{0}\otimes v)=2\left(  \beta+2\right)  \left(
\beta-\frac{1}{2}\right)  \left(  \beta-\frac{15}{16}\right)  \left(
v_{-2}\otimes v\right)
\end{gather*}
show that $U_{1}\subseteq U_{-2}$ when $\beta\neq\frac{1}{2},\frac{15}{16}$.

Let $\beta=\frac{1}{2}.$ Using $L_{1},$ $L_{2}$ and $L_{-1}$ we get
\[
\left(  V_{0,\frac{1}{2}}^{\prime}\otimes L(\frac{1}{2},0)\right)
=U_{-1}\supseteq U_{0}=U_{1}=\cdots
\]
Since $V_{0,\frac{1}{2}}^{\prime}\otimes L(\frac{1}{2},0)$ is reducible by
Corollary \ref{hom}, we must have $U_{0}\neq U_{1}$. By Lemma \ref{kvoc},
$U_{-1}/U_{0}$ is isomorphic to some quotient of $V(\frac{1}{2},\frac{1}{2})$.
It remains to show that
\begin{align*}
(L_{-1}^{2}-\frac{4}{3}L_{-2})(v_{-1}\otimes v)  &  \in U_{0},\\
(L_{-3}-\frac{4}{5}L_{-2}L_{-1})(v_{-1}\otimes v)  &  \in U_{0}.
\end{align*}
However, by direct computation we find
\begin{gather*}
(L_{-1}^{2}-\frac{4}{3}L_{-2})(v_{-1}\otimes v)=-\frac{4}{3}v_{-1}\otimes
L_{-2}v=s_{6}(v_{3}\otimes v)+\\
+480L_{-2}^{2}(v_{1}\otimes v)+372L_{-3}(v_{0}\otimes v)-660L_{-4}%
(v_{1}\otimes v)\in U_{0}%
\end{gather*}
where $s_{6}v$ is a weight 6 singular vector in $V(\frac{1}{2},0)$. Also,
$(L_{-3}-\frac{4}{5}L_{-2}L_{-1})(v_{-1}\otimes v)=0$.

If $\beta=\frac{15}{16}$, in a similar fashion one can show that
\begin{align*}
(L_{-1}^{2}-\frac{3}{4}L_{-2})(v_{-1}\otimes v)  &  \in U_{0},\\
(16L_{-2}^{2}-24L_{-3}L_{-1}-9L_{-4})(v_{-1}\otimes v)  &  \in U_{0}.
\end{align*}

\end{proof}

When $h_{m,n}\neq0$ we proceed similary. Using $L_{1},$ $L_{2}$ and singular
vectors on levels 2, 3 or 4 one can prove

\begin{proposition}
\label{p3}$V_{\alpha,\beta}^{\prime}\otimes L(-\frac{22}{5},-\frac{1}{5})$ is
irreducible if and only if $(\alpha,\beta)\neq(-\frac{2}{5},\frac{6}%
{5}),\allowbreak(-\frac{1}{5},\frac{6}{5})$.

$V_{\alpha,\beta}^{\prime}\otimes L(\frac{1}{2},\frac{1}{2})$ is irreducible
if and only if $(\alpha,\beta)\neq(0,\frac{1}{2}),(\frac{1}{2},\frac{15}{16})
$.

$V_{\alpha,\beta}^{\prime}\otimes L(\frac{1}{2},\frac{1}{16})$ is irreducible
if and only if $(\alpha,\beta)\neq(\frac{1}{8},\frac{15}{16}),\allowbreak
(-\frac{3}{8},\frac{15}{16}),\allowbreak(\frac{1}{2},\frac{1}{2})$.
\end{proposition}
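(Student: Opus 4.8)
The plan is to treat the three claims uniformly, following the template already established in Propositions \ref{p1} and \ref{p2}. For each minimal model $L(c_{p,q},h_{m,n})$ appearing here, the Verma module $V(c_{p,q},h_{m,n})$ has two independent singular vectors, say $u_1 v$ and $u_2 v$, of weights $h+m_1$ and $h+m_2$ respectively; I would first write these down explicitly (level $2$ and $3$ for $c=-22/5$, $h=-1/5$; level $2$ and level $4$ for $c=1/2$, $h=1/2$; and the appropriate low-level vectors for $c=1/2$, $h=1/16$). Each singular vector produces, via the construction in the proof of Theorem \ref{poly}, an identity of the form \eqref{sing} yielding a degree-$m_i$ polynomial $p_i(n)\in\mathbb{Q}(\alpha,\beta,h)[x]$ such that $U_{n-1}\subseteq U_n$ whenever $p_i(n)\neq 0$ (after normalizing $\alpha+2\beta=0$ or handling the exceptional integer cases of $\alpha,\beta$ as in Theorem \ref{poly}). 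The point of having two independent singular vectors is that $V_{\alpha,\beta}^{\prime}\otimes L(c,h)$ is reducible only if $p_1$ and $p_2$ have a \emph{common} integral root; so one computes the integral roots of $p_1$ and of $p_2$ (using the Kac determinant formula / the explicit forms in Examples \ref{s2}, \ref{s3} where applicable) and checks that the only common integral roots occur exactly at the pairs $(\alpha,\beta)$ listed.

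Concretely, for the ``if'' direction (irreducibility when $(\alpha,\beta)$ avoids the listed pairs) I would: (i) if $\alpha\notin\mathbb{Z}$ invoke Proposition \ref{j} when $h=0$, and more generally reduce to $\alpha+2\beta\in\mathbb{Z}$ using $L_1$, normalizing $\alpha+2\beta=0$; (ii) write out the two relations \eqref{sing} for $u_1$ and $u_2$ at the relevant shifts $n=1,2$ (as done in Propositions \ref{p1}, \ref{p2}), giving explicit factored right-hand sides of the form $(\text{const})\cdot\prod(\beta-\beta_j)\cdot v_{n-1}\otimes v$; (iii) conclude $U_{n}=U_{n+1}$ for all $n$ except when $\beta$ hits one of finitely many values, and for each such exceptional $\beta$ use the \emph{other} singular vector's relation to still force $U_n=U_{n+1}$ unless additionally $\alpha$ takes a specific value; (iv) handle the degenerate cases $\beta=0,1$ (where some $U_n$ is absent) and the integer-$\alpha$ cases separately, exactly as in the earlier proofs, using $L_2$, $L_{-1}L_2$, etc. For the ``only if'' direction I would quote Corollary \ref{hom}: the listed homomorphisms $V_{\alpha,\beta}^{\prime}\otimes M_2\to M_3$ have image a proper nonzero submodule (by comparing dimensions of weight spaces — the target has finite-dimensional weight spaces while the source has infinite-dimensional ones), hence those tensor products are reducible.

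For the ``Moreover'' type refinement (identifying the quotient $U_{n-1}/U_n$ as a specific minimal model) — which is implicit here since Proposition \ref{p3} is stated in the same spirit as \ref{p1}, \ref{p2} even if the isomorphisms are not restated — I would use Lemma \ref{kvoc} to see the relevant subquotient is a highest weight module of the correct highest weight, then verify that the defining singular vector relations of the target irreducible minimal model already hold in that subquotient, by explicit computation of the form $(u_j)(v_{n-1}\otimes v)\in U_n$, reducing the defining relation of $L(c,h')$ to combinations of the singular vector relations \eqref{sing} in $L(c,h)$; this forces the subquotient to be a quotient of $L(c,h')$, and since it is nonzero and $L(c,h')$ is irreducible, it \emph{is} $L(c,h')$.

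The main obstacle is step (ii)--(iii): the explicit bookkeeping of the relations \eqref{sing} for the level-$3$ and especially the level-$4$ singular vectors, and then checking that the integral roots of the two resulting polynomials $p_1,p_2$ coincide \emph{only} at the claimed pairs and nowhere else. In particular one must rule out ``accidental'' common integral roots for other $(\alpha,\beta)$; this requires knowing the roots of both polynomials explicitly (the level-$2$ and level-$3$ cases are given in Examples \ref{s2}, \ref{s3}, but the level-$4$ polynomial for the Ising model at $h=1/2$ must be computed from the weight-$4$ singular vector $L_{-1}^4 - \frac{10}{3}L_{-2}L_{-1}^2 + L_{-2}^2 + \ldots$, or rather the correct $c=1/2$ one) and comparing root sets — a finite but delicate computation. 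A secondary subtlety is correctly treating all the boundary cases $\beta\in\{0,1\}$ and $\alpha\in\mathbb{Z}$ for each of the three models, since the indexing set of the $U_n$ changes and some of the ``forbidden pairs'' (e.g.\ $(\frac12,\frac15)$, which has $\alpha=\frac12\notin\mathbb{Z}$, $\beta=\frac12$) fall outside those special cases while others sit right on them.
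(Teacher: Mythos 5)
Your plan follows essentially the same route as the paper, which in fact gives hardly more detail than you do: it simply says to proceed as in Propositions \ref{p1}--\ref{p2}, using $L_{1}$, $L_{2}$ and the two independent singular vectors (levels $2$, $3$ or $4$) to force $U_{n}=U_{n+1}$ away from the listed pairs, with reducibility at exactly those pairs supplied by Corollary \ref{hom}. One small correction to your setup: for $c=\frac{1}{2}$ the independent singular vectors sit at levels $2$ and $3$ when $h=\frac{1}{2}$ and at levels $2$ and $4$ when $h=\frac{1}{16}$ (you swapped these), as the relations displayed at the end of the paper's proof of Proposition \ref{p2} already indicate.
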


\begin{remark}
By direct computation, just as in Propositions \ref{p1}-\ref{p2}, one can show
that the kernel of each homomorphism in Corollary \ref{hom} is $U_{k}$ for
some $k\in\left\{  -2,-1,0\right\}  $. Every such $U_{k}$ is irreducible (by
Corollary \ref{f1}). Therefore $V_{\alpha,\beta}^{\prime}\otimes
L(c_{p,q},h_{m,n})$ has a Jordan-H%
\"{o}%
lder composition of length 2.
\end{remark}

Based on the examples shown above and other examples not mentioned here (such
as $c_{2,7}=-68/7$ minimal models) we state

\begin{conjecture}
Let $c=c_{p,q}\neq0$ and let $L(c,h)$ be a minimal model. The module
$V_{\alpha,\beta}^{\prime}\otimes L(c,h)$ is reducible if and only if there
exists an admissible triple $((m,n),\allowbreak(m^{\prime},n^{\prime
}),\allowbreak(m^{\prime\prime},n^{\prime\prime}))$ such that $h=h_{m^{\prime
},n^{\prime}}$, $\alpha=h_{m,n}+h_{m^{\prime},n^{\prime}}-h_{m^{\prime\prime
},n^{\prime\prime}}$ and $\beta=1-h_{m,n}$. In this case there is an
irreducible submodule $U$ such that
\[
\left(  V_{\alpha,\beta}^{\prime}\otimes L(c,h)\right)  /U\cong
L(c,h_{m^{\prime\prime},n^{\prime\prime}}).
\]

\end{conjecture}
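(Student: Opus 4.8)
The plan is to prove the two implications separately; the ``if'' (reducibility) direction is short, and the ``only if'' (irreducibility) direction is the heart of the matter.

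For the \emph{if} direction, suppose $((m,n),(m',n'),(m'',n''))$ is an admissible triple with $h=h_{m',n'}$, $\alpha=h_{m,n}+h_{m',n'}-h_{m'',n''}$, $\beta=1-h_{m,n}$, and assume first $h_{m,n}\neq0$. By Wang's fusion rules \cite{Wang} there is a nontrivial intertwining operator of type $\binom{L(c,h_{m'',n''})}{L(c,h_{m,n})\ L(c,h_{m',n'})}$, so Theorem \ref{red} produces a nontrivial $\operatorname*{Vir}$-homomorphism $\Phi\colon V_{\alpha,\beta}^{\prime}\otimes L(c,h)\to L(c,h_{m'',n''})$ with exactly these parameters. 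Since $L(c,h_{m'',n''})$ is irreducible, $\Phi$ is surjective; since the domain has infinite-dimensional weight spaces while the target has finite-dimensional ones, $\ker\Phi\neq0$, so $V_{\alpha,\beta}^{\prime}\otimes L(c,h)$ is reducible. By Corollary \ref{f1}, $\ker\Phi$ contains some $U_k$; as in Propositions \ref{p1}--\ref{p3} one checks $\ker\Phi=U_k$, and that $U_k$ is irreducible using $U_k=U_{k+j}$ for all $j\ge0$ together with Corollary \ref{f1}, which yields the submodule $U$ and the quotient. The degenerate case $h_{m,n}=0$ (equivalently $\beta=1$) forces $\alpha=0$ and $(m'',n'')=(m',n')$, Theorem \ref{red} no longer applies, and this stratum must be handled by hand; in fact the statement needs a small correction there (for instance $V_{0,1}^{\prime}\otimes L(c,0)$ is already irreducible, and when $h\neq0$ the relevant subquotient has highest weight $h+1$, as in Theorem \ref{poly}).

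For the \emph{only if} direction, the first step is to extend Theorem \ref{poly} to $c=c_{p,q}$. The Verma module $V(c_{p,q},h_{m',n'})$ has two independent singular vectors, at levels $N_1=m'n'$ and $N_2=(p-m')(q-n')$, each normalized (cf.\ the proof of Theorem \ref{poly}, \cite{Feigin-Fuchs}) to have leading term $L_{-1}^{N_i}$. Running the computation that produces (\ref{sing}) for each yields degree-$N_i$ polynomials $p_1,p_2\in\mathbb{Q}(\alpha,\beta,h)[x]$ with $U_{n-1}\subseteq U_n+\dots+U_{n+N_i-1}$ whenever $p_i(n)\neq0$. When $\alpha+2\beta\notin\mathbb{Z}$, the relation $L_1(v_n\otimes v)=-(n+\alpha+2\beta)v_{n+1}\otimes v$ gives $U_n\supseteq U_{n+1}$ for all $n$, so these collapse to $U_{n-1}\subseteq U_n$ as soon as $p_1(n)\neq0$ \emph{or} $p_2(n)\neq0$; hence $U_n=U_{n+1}$ for all $n$ unless $p_1$ and $p_2$ share an integral root. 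The cases $\alpha+2\beta\in\mathbb{Z}$ are disposed of exactly as in the proof of Theorem \ref{poly} (normalize $\alpha+2\beta=0$, bridge $U_{-1}\supseteq U_1$ via $L_2$, and combine). With Theorem \ref{main} this shows: $V_{\alpha,\beta}^{\prime}\otimes L(c,h)$ is reducible if and only if $p_1$ and $p_2$ have a common integral root; moreover there are at most finitely many such roots, so the chain $\cdots\supseteq U_{n-1}\supseteq U_n\supseteq\cdots$ has finitely many strict steps and stabilizes in both directions.

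It then remains to identify the common integral roots of $p_1,p_2$ with admissible triples, i.e.\ to prove the key lemma that $\{(\alpha,\beta)\}$ for which $p_1,p_2$ share an integral root equals $\{(h_{m,n}+h_{m',n'}-h_{m'',n''},\,1-h_{m,n}):((m,n),(m',n'),(m'',n''))\ \text{admissible}\}$, up to the degenerate strata above. I would prove this by matching the $p_i$ with the Frenkel--Zhu fusion polynomials via Zhu's algebra $A(L(c_{p,q},0))$ \cite{Frenkel-Zhu}, \cite{Wang}, which is the explicitly known quotient of $\mathbb{C}[x]$ by the images of the level-$N_1$ and level-$N_2$ singular vectors: the obstruction to an intertwining operator of type $\binom{M_3}{M_1\ L(c,h_{m',n'})}$ with $\operatorname{wt}M_1=1-\beta$ and $\operatorname{wt}M_3=1-\beta+h_{m',n'}-\alpha$ is, after reindexing $n$ so that the subquotient weight $h-\alpha-\beta-n+1$ from Lemma \ref{kvoc} equals $\operatorname{wt}M_3$, precisely a nonzero scalar multiple of $p_i$. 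Since the Frenkel--Zhu/Wang computation already determines that these obstructions vanish exactly when the corresponding triple of minimal weights is admissible, a \emph{common} integral root of $p_1$ and $p_2$ forces $1-\beta$ to itself be a minimal weight $h_{m,n}$ (this is where needing both singular vectors enters), and then forces admissibility; the converse is the \emph{if} direction. The quotient is identified through the surjection $\Phi$ of Theorem \ref{red}, giving $(V_{\alpha,\beta}^{\prime}\otimes L(c,h))/U\cong L(c,h_{m'',n''})$ with $U=\ker\Phi$ irreducible by the chain analysis and Corollary \ref{f1}; one should also check that for minimal-model parameters $p_1,p_2$ have at most one common integral root, so that the composition length is exactly $2$ as predicted.

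The main obstacle is this last identification. Unlike the $c\neq c_{p,q}$ case of Theorem \ref{poly} and Examples \ref{s2}--\ref{s3}, for general minimal models there is no usable closed form for the two singular vectors, so a direct computation of $p_1,p_2$ and their resultant is not feasible. The proposed way around this is never to leave Zhu's algebra: express the $p_i$ through the known defining relations of $A(L(c_{p,q},0))$ and the bimodule data controlling intertwining operators, turning the comparison with the Frenkel--Zhu fusion polynomial into an identity in an explicit polynomial ring rather than a Verma-module calculation. A secondary, more routine difficulty is the bookkeeping of the degenerate strata $\alpha+2\beta\in\mathbb{Z}$, and especially $\beta=1$, where (as noted) the conjecture as stated needs adjusting, so one should restrict attention to admissible triples with $h_{m,n}\neq0$.
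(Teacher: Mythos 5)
The statement you are trying to prove is stated in the paper only as a \emph{conjecture}: the paper verifies it case by case for $c=-\tfrac{22}{5}$ and $c=\tfrac12$ (Corollary \ref{hom} together with Propositions \ref{p1}--\ref{p3}, which rely on explicit singular vectors), and offers no general proof, so there is no proof of the paper's to compare yours against. Your ``if'' direction is exactly the paper's mechanism (fusion rules plus Theorem \ref{red}, then identification of the kernel with some $U_k$), and your observation that the vacuum stratum $h_{m,n}=0$, i.e.\ $\beta=1$, must be excluded is a genuine and correct refinement: Theorem \ref{red} requires $h_1\neq0$, and indeed $V'_{0,1}\otimes L(-\tfrac{22}{5},0)$ is irreducible by Proposition \ref{p1} even though $((1,1),(m',n'),(m',n'))$ is an admissible triple giving $(\alpha,\beta)=(0,1)$.

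As a proof, however, the proposal has a real gap, which you yourself flag: the key lemma that a \emph{common} integral root of the two polynomials $p_1,p_2$ produced (as in (\ref{sing})) by the level-$m'n'$ and level-$(p-m')(q-n')$ singular vectors of $V(c_{p,q},h_{m',n'})$ occurs precisely for $(\alpha,\beta)=\bigl(h_{m,n}+h_{m',n'}-h_{m'',n''},\,1-h_{m,n}\bigr)$ with $((m,n),(m',n'),(m'',n''))$ admissible is never proved; it is only proposed to be extracted from Zhu's algebra $A(L(c_{p,q},0))$ and the Frenkel--Zhu bimodule description of fusion. In particular, the crucial assertions that a common integral root forces $1-\beta$ to be a minimal weight $h_{m,n}$ and then forces admissibility are stated without argument, and the claimed match between your $p_i$ (built from the action on $V'_{\alpha,\beta}\otimes L(c,h)$) and the Frenkel--Zhu obstruction polynomials (built from $A(V)$-bimodules) is itself a nontrivial identification requiring proof. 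Likewise, $\ker\Phi=U_k$, irreducibility of $U$, and composition length two are asserted ``as in Propositions \ref{p1}--\ref{p3}'', but those rest on direct computations with explicit singular vectors unavailable in general. The parts you do establish (reducible $\Rightarrow$ the two polynomials share an integral root, via the chain argument and Theorem \ref{main}; admissible triple with $h_{m,n}\neq0$ $\Rightarrow$ reducible, via Theorem \ref{red}) are sound and consistent with the paper's remarks on $c=c_{p,q}$ and on \cite{Chen-Guo-Zhao}, but the bridge from common integral roots to admissible triples and the general identification of the quotient remain open in your write-up --- exactly where the paper leaves them open.
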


\subsection{c=1 intertwining operators\label{c=1}}

It is known (see \cite{Kac-Raina}, \cite{Frenkel-Zhu}) that $V(1,h)=L(1,h)$ if
and only if $h\neq\frac{m^{2}}{4}$ for $m\in\mathbb{Z}$. In case $h=m^{2}$,
the unique maximal submodule of $V(1,m^{2})$ is generated by a weight
$(m+1)^{2}$ vector and is isomorphic to $V(1,(m+1)^{2})$. We have the
following fusion rules for VOA $L(1,0)$ from \cite{Dong-Jiang}, \cite{Milas}:

Let $m,n,k\in\mathbb{N}$. An intertwining operator of type $\binom{L(1,k^{2}%
)}{L(1,m^{2})\text{\quad}L(1,n^{2})}$ exists if and only if $\left\vert
n-m\right\vert \leq k\leq n+m$. If $n\neq p^{2}$ for all $p\in\mathbb{Z}$,
then the operator of type $\binom{L(1,k)}{L(1,m^{2})\text{\quad}L(1,n)}$
exists if and only if $k=n$. Also, a transposed operator of type
$\binom{L(1,k)}{L(1,n)\text{\quad}L(1,m^{2})}$ exists if and only if $k=n$.

Now consider $L(1,1)$. For a fixed $m\in\mathbb{N}$ we have the operators of
types $\binom{L(1,(m-1)^{2})}{L(1,m^{2})\text{\quad}L(1,1)}$, $\binom
{L(1,m^{2})}{L(1,m^{2})\text{\quad}L(1,1)}$ and $\binom{L(1,(m+1)^{2}%
)}{L(1,m^{2})\text{\quad}L(1,1)}$ when $n\neq p^{2}$. Since $\alpha
=m^{2}+1-k^{2}\in\mathbb{Z}$, this proves reducibility of $V_{0,1-m^{2}%
}^{\prime}\otimes L(1,1)$. From the existence of a transposed operator of type
$\binom{L(1,n)}{L(1,n)\text{\quad}L(1,1)}$, we get reducibility of
$V_{0,1-n}^{\prime}\otimes L(1,1)$ for all $n\in\mathbb{N}$.

\begin{example}
Now we apply Theorem \ref{poly} to $V_{\alpha,\beta}^{\prime}\otimes L(1,1)$.
The Verma module $V(1,1)$ is reducible with degree 3, and the associated
polynomial has roots $-\alpha\pm2\sqrt{1-\beta}$ and $1-\alpha$ (see Example
\ref{s3}). If we set $\alpha=0$, then $U_{0}/U_{1}$ is the highest weight
module of the highest weight $1-\beta$. If $2\sqrt{1-\beta}\in\mathbb{Z}$, or
equivalently if $1-\beta=\frac{m^{2}}{4}$ for some $m\in\mathbb{Z}$, then
$U_{m-1}/U_{m}$ is the highest weight module of the weight $(\frac{m}%
{2}+1)^{2}$ and $U_{-m-1}/U_{-m}$ is the highest weight module of the weight
$(\frac{m}{2}-1)^{2}$. This indicates existence of the intertwining operators
of types
\[
\binom{L\left(  1,h\right)  }{L\left(  1,h\right)  \text{\quad}L\left(
1,1\right)  },\binom{L\left(  1,\left(  \frac{m+2}{2}\right)  ^{2}\right)
}{L\left(  1,\left(  \frac{m}{2}\right)  ^{2}\right)  \text{\quad}L\left(
1,1\right)  },\binom{L\left(  1,\left(  \frac{m-2}{2}\right)  ^{2}\right)
}{L\left(  1,\left(  \frac{m}{2}\right)  ^{2}\right)  \text{\quad}L\left(
1,1\right)  },
\]
for $h\in\mathbb{C}$, and $m\in\mathbb{Z}$ which agrees with (specially if
$h=\frac{m^{2}}{4}$) results from \cite{Dong-Jiang}.

Of course, for every $\alpha\in\mathbb{C\setminus Z}$ there are two nontrivial
subquotients, namely $U_{-\alpha\pm2\sqrt{1-\beta}-1}/U_{-\alpha\pm
2\sqrt{1-\beta}}$, but these are isomorphic to irreducible Verma modules with
highest weights $(\sqrt{1-\beta}\mp1)^{2}\notin\frac{1}{4}\mathbb{Z}^{2}$.
This could lead to the existence of the operators of type $\binom{L\left(
1,(\sqrt{h}-1)^{2}\right)  }{L\left(  1,h\right)  \text{\quad}L\left(
1,1\right)  }$ and $\binom{L\left(  1,(\sqrt{h}+1)^{2}\right)  }{L\left(
1,h\right)  \text{\quad}L\left(  1,1\right)  }$.
\end{example}

\begin{corollary}
If an intertwining operator of type $\binom{L\left(  1,h^{\prime}\right)
}{L\left(  1,h\right)  \text{\quad}L\left(  1,1\right)  }$ exists, then
$h^{\prime}\in\left\{  \left(  \sqrt{h}-1\right)  ^{2},h,\left(  \sqrt
{h}+1\right)  ^{2}\right\}  $. In particular if an intertwining operator of
type $\binom{L\left(  1,\left(  \frac{k}{2}\right)  ^{2}\right)  }{L\left(
1,\left(  \frac{m}{2}\right)  ^{2}\right)  \text{\quad}L\left(  1,1\right)  }$
exists for some $m,k\in\mathbb{Z}_{+}$, then $k\in\left\{  m-2,m,m+2\right\}
$.
\end{corollary}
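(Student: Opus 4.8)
The plan is to deduce the corollary directly from Theorem \ref{red} together with the computation carried out in the preceding Example. First I would recall the setup: if $\mathcal{I}$ is a nontrivial intertwining operator of type $\binom{L(1,h')}{L(1,h)\;L(1,1)}$, then by Theorem \ref{red} (applied with $M_1=L(1,1)$, so that $h_1=1\neq 0$, $M_2=L(1,h)$, $M_3=L(1,h')$) there is a nontrivial $\operatorname*{Vir}$-homomorphism $V_{\alpha,\beta}'\otimes L(1,h)\to L(1,h')$ with $\alpha=1+h-h'$ and $\beta=1-h_1=0$. Wait — I need $\beta=1-h_1=0$, but the Example took $L(1,1)$ in the \emph{second} slot. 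So instead I would apply Theorem \ref{red} with $M_1=L(1,1)$ only if that is the first argument; to match the Example I should instead use the transposed/adjoint symmetry of intertwining operators to rewrite an operator of type $\binom{L(1,h')}{L(1,h)\;L(1,1)}$ as one of type $\binom{L(1,h')}{L(1,1)\;L(1,h)}$ (or directly invoke the version of Theorem \ref{red} with $h_1=h$), yielding a homomorphism $V_{\alpha,\beta}'\otimes L(1,h)\to L(1,h')$ with $\alpha=h+1-h'$ and $\beta=1-h$. Consequently $V_{\alpha,\beta}'\otimes L(1,h)$ is reducible.

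Next I would invoke Theorem \ref{poly}. Since $c=1\neq c_{p,q}$ for the relevant range, and $V(1,h)$ is reducible (it always contains a singular vector: for generic $h$ of the relevant form it is reducible with degree $3$, as noted in Example \ref{s3} and the $c=1$ example), there is a polynomial $p(x)$ of degree equal to the reduction degree whose integral roots govern reducibility. For $V(1,h)$ with the degree-$3$ singular vector, Example \ref{s3} gives the roots of $p$ explicitly as $-\alpha\pm 2\sqrt{1-\beta}$ and $1-\alpha$. Substituting $\beta=1-h$ and $\alpha=h+1-h'$, the three roots become $-(h+1-h')\pm 2\sqrt{h}$ and $1-(h+1-h')=h'-h$. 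Reducibility forces one of these three numbers to be an integer; but more is true — since $V_{\alpha,\beta}'\otimes L(1,h)$ is actually reducible, Theorem \ref{poly} tells us the polynomial has an integral root, and by Lemma \ref{kvoc} the corresponding subquotient is a highest weight module of weight $h-\alpha-\beta-n+1$. Matching the highest weight of the image: the homomorphism's image is a highest weight module of weight $h'$, so comparing $h'$ with the possible subquotient weights coming from the three roots pins down $h'$.

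Concretely, I would compute: for the root $n=1-\alpha=h'-h$ the subquotient weight is $h-\alpha-\beta-n+1 = h-(h+1-h')-(1-h)-(h'-h)+1 = h$; for the roots $n=-\alpha\pm 2\sqrt{h}$, a parallel computation gives subquotient weights $(\sqrt{h}\mp 1)^2$. Hence the only highest weight modules that can occur as images are $L(1,h)$, $L(1,(\sqrt h-1)^2)$, and $L(1,(\sqrt h+1)^2)$, so $h'\in\{(\sqrt h-1)^2,\,h,\,(\sqrt h+1)^2\}$. The particular case $h=(m/2)^2$, $h'=(k/2)^2$ is then immediate: $(\sqrt h\pm 1)^2=((m\pm 2)/2)^2$, so $(k/2)^2\in\{((m-2)/2)^2,(m/2)^2,((m+2)/2)^2\}$, and since $m,k\in\mathbb{Z}_+$ we may take positive square roots to conclude $k\in\{m-2,m,m+2\}$.

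The main obstacle I anticipate is bookkeeping the two conventions: Theorem \ref{red} is stated with $\beta=1-h_1$ where $h_1$ is the highest weight of the \emph{first} tensor factor $M_1$, whereas the Example and the target corollary put $L(1,1)$ in the first slot of the intertwining operator but want $L(1,h)$ as the module being tensored. I would resolve this by using the standard symmetries of intertwining operators (an operator of type $\binom{W_3}{W_1\;W_2}$ gives rise to ones of types $\binom{W_3}{W_2\;W_1}$ and $\binom{W_2'}{W_1\;W_3'}$, all modules here being self-contragredient) to arrange that the factor of highest weight $h$ sits in the first slot, so that Theorem \ref{red} yields $\beta=1-h$ and $\alpha=h+1-h'$ as used above; alternatively one checks Theorem \ref{red} directly with $h_1=h$. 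A secondary subtlety is that $V(1,h)$ might be irreducible (when $h\neq m^2/4$) or reducible with degree other than $3$ — but in those cases the corollary's hypothesis forces $h'=h$ via the degree-$1$ or trivial analysis, or the relevant $h$ simply does not arise, and in any event $(\sqrt h\pm1)^2$ collapses appropriately; I would dispatch these degenerate cases with a brief remark rather than a separate argument.
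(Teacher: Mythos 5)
Your overall strategy (Theorem \ref{red} to get reducibility, then Theorem \ref{poly} plus a matching of subquotient highest weights) is the route the paper intends, but the execution contains a fatal bookkeeping error: Theorem \ref{red} tensors the intermediate series with the \emph{second} input $M_{2}$ of the intertwining operator. For an operator of type $\binom{L(1,h^{\prime})}{L(1,h)\ L(1,1)}$ one has $M_{1}=L(1,h)$, $M_{2}=L(1,1)$, $M_{3}=L(1,h^{\prime})$, so (for $h\neq 0$) the theorem gives a nontrivial homomorphism $V_{\alpha,\beta}^{\prime}\otimes L(1,1)\rightarrow L(1,h^{\prime})$ with $\alpha=h+1-h^{\prime}$ and $\beta=1-h$; the tensor factor is $L(1,1)$, not $L(1,h)$. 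The module you end up working with, $V_{h+1-h^{\prime},\,1-h}^{\prime}\otimes L(1,h)$, follows from neither reading: transposing the operator would give $V_{1+h-h^{\prime},\,0}^{\prime}\otimes L(1,h)$ (with $\beta=1-h_{1}=0$), while the direct reading keeps $L(1,1)$ as the tensor factor.

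This mislabeling invalidates the second half of your argument. Theorem \ref{poly} requires the Verma module of the tensor factor to be reducible; $V(1,h)$ is irreducible for all $h\neq m^{2}/4$, and then $V_{\alpha,\beta}^{\prime}\otimes L(1,h)=V_{\alpha,\beta}^{\prime}\otimes V(1,h)$ is \emph{always} reducible (Theorem 3 in \cite{Zhang}), so reducibility imposes no constraint on $h^{\prime}$ whatsoever in your framework --- your closing remark cannot dismiss this as a degenerate case, since it is the generic case of the first claim. Even for $h=(m/2)^{2}$ the singular vector sits at level $m+1$, so the relevant polynomial is not the degree-$3$ one with roots $-\alpha\pm 2\sqrt{1-\beta}$, $1-\alpha$: that formula is specific to $V(1,1)$. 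Your numerics betray the slip: with your own parameters the subquotient weight at the root $n=h^{\prime}-h$ is $h-\alpha-\beta-n+1=2h-1$, not $h$; the value $h$ you report comes from the formula $1-\alpha-\beta-n+1$ appropriate to tensoring with $L(1,1)$. The paper's (compressed) proof is: apply Theorem \ref{red} as stated, so $V_{h+1-h^{\prime},1-h}^{\prime}\otimes L(1,1)$ is reducible and maps onto $L(1,h^{\prime})$; by the preceding example (Theorem \ref{poly} for $V(1,1)$, degree $3$) its only highest weight subquotients have highest weights $1-\beta=h$ and $(\sqrt{1-\beta}\mp 1)^{2}=(\sqrt{h}\mp 1)^{2}$, and matching these with the highest weight of the image --- the correct supplementary step you did have in mind --- yields $h^{\prime}\in\{(\sqrt{h}-1)^{2},h,(\sqrt{h}+1)^{2}\}$, the integer case following at once.
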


\begin{proof}
Directly from Theorem \ref{red} and the previous example.
\end{proof}

\begin{example}
We can also apply Theorem \ref{poly} to $V_{\alpha,\beta}^{\prime}\otimes
L(1,\frac{1}{4})$. There is a singular vector in $V(1,\frac{1}{4})$ at level
2, so the roots of the associated polynomial are $-\alpha\pm\sqrt{1-\beta}$.
If we want $1-\beta=\frac{m^{2}}{4}$, then $\alpha=\frac{k}{2}$ for some
$k\in\mathbb{Z}$. In this case, $U_{\frac{\pm m-k}{2}-1}/U_{\frac{\pm m-k}{2}}
$ is the highest weight module of the weight $\frac{(m\mp1)^{2}}{4}$. This
gives the operators of type
\[
\binom{L\left(  1,(\frac{m-1}{2})^{2}\right)  }{L\left(  1,\left(  \frac{m}%
{2}\right)  ^{2}\right)  \text{\quad}L\left(  1,\frac{1}{4}\right)  }\text{
and }\binom{L\left(  1,(\frac{m+1}{2})^{2}\right)  }{L\left(  1,\left(
\frac{m}{2}\right)  ^{2}\right)  \text{\quad}L\left(  1,\frac{1}{4}\right)
}.
\]
Existence of these intertwining operators was proved in \cite{Milas}. For
$\alpha\notin\frac{1}{2}\mathbb{Z}$ we get a subquotient isomorphic to a
highest weight module of the weight $(\sqrt{1-\beta}\mp\frac{1}{2})^{2}$
indicating operators of type $\binom{L\left(  1,(\sqrt{h}-\frac{1}{2}%
)^{2}\right)  }{L\left(  1,h\right)  \text{\quad}L\left(  1,1\right)  }$ and
$\binom{L\left(  1,(\sqrt{h}+\frac{1}{2})^{2}\right)  }{L\left(  1,h\right)
\text{\quad}L\left(  1,1\right)  }$.
\end{example}

\begin{corollary}
If an intertwining operator of type $\binom{L\left(  1,h^{\prime}\right)
}{L\left(  1,h\right)  \text{\quad}L\left(  1,\frac{1}{4}\right)  }$ exists,
then $h^{\prime}=\left(  \sqrt{h}\pm\frac{1}{2}\right)  ^{2}$. In particular,
let $m,k\in\mathbb{Z}_{+}$. If an intertwining operator of type $\binom
{L\left(  1,\left(  \frac{k}{2}\right)  ^{2}\right)  }{L\left(  1,\left(
\frac{m}{2}\right)  ^{2}\right)  \text{\quad}L\left(  1,\frac{1}{4}\right)  }$
exists, then $k=m\pm1$.
\end{corollary}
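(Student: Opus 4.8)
The plan is to deduce the corollary directly from Theorem \ref{red} together with the computation carried out in the preceding example on $V_{\alpha,\beta}^{\prime}\otimes L(1,\tfrac14)$. Suppose an intertwining operator of type $\binom{L(1,h')}{L(1,h)\ \ L(1,\tfrac14)}$ exists. In the notation of Theorem \ref{red} we take $M_1=L(1,\tfrac14)$, $M_2=L(1,h)$, $M_3=L(1,h')$, so that $h_1=\tfrac14\neq0$, and the theorem produces a nontrivial $\operatorname*{Vir}$-homomorphism $V_{\alpha,\beta}^{\prime}\otimes L(1,h)\to L(1,h')$ with $\beta=1-h_1=\tfrac34$ and $\alpha=h_1+h_2-h_3=\tfrac14+h-h'$. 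In particular $V_{\alpha,\beta}^{\prime}\otimes L(1,h)$ is reducible for this pair $(\alpha,\beta)$.

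Next I would invoke Theorem \ref{poly} applied to $V_{\alpha,\beta}^{\prime}\otimes L(1,h)$ (noting $c=1\neq c_{p,q}$; one checks $1\neq c_{p,q}$ since $c_{p,q}<1$ for the relevant $p,q$, so Theorem \ref{poly} applies). The Verma module $V(1,h)$ carries a weight-$2$ singular vector precisely when $h=\tfrac14$ — or more to the point, we only need the value of $\beta$: since $\beta=\tfrac34$, the relevant polynomial $p(x)$ has, by Example \ref{s2} (or the $m=2$ case), roots whose relevant part gives that $V_{\alpha,\beta}^{\prime}\otimes L(1,h)$ is reducible only if one of these roots is an integer. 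Here I should be a little careful: Theorem \ref{poly} requires $V(1,h)$ itself to be reducible with degree $m$, which for general $h$ it is not. The cleaner route, and the one I would actually write, is to read off the constraint purely from the computation in the $L(1,\tfrac14)$-example: there it was shown that when a homomorphism $V_{\alpha,\tfrac34}^{\prime}\otimes L(1,(\tfrac m2)^2)\to L(1,h')$ exists the target highest weight is forced to be $h'=(\sqrt h\mp\tfrac12)^2$, and the same algebraic identity — reducibility of $V_{\alpha,\tfrac34}^{\prime}\otimes L(1,h)$ forces $\alpha=-\big(\!\pm\sqrt{1-\beta}\big)=\mp\tfrac12$ modulo the integer shift, combined with $\alpha=\tfrac14+h-h'$ — yields $h-h'=\alpha-\tfrac14=\mp\tfrac12-\tfrac14$ adjusted so that $h'=(\sqrt h\pm\tfrac12)^2$. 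So the general statement $h'\in\{(\sqrt h-\tfrac12)^2,(\sqrt h+\tfrac12)^2\}$ follows by matching $\alpha$ in the two expressions.

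For the ``in particular'' clause, I would specialize to $h=(\tfrac m2)^2$ and $h'=(\tfrac k2)^2$ with $m,k\in\mathbb{Z}_+$. Then $\sqrt h=\tfrac m2$ and the first part gives $(\tfrac k2)^2=(\tfrac m2\pm\tfrac12)^2=(\tfrac{m\pm1}{2})^2$, hence $\tfrac k2=\tfrac{m\pm1}{2}$ (both sides positive, so we may drop the outer sign), i.e.\ $k=m\pm1$. This is immediate once the first part is in place.

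The main obstacle is the bookkeeping of the ``modulo $\mathbb{Z}$'' ambiguity. Theorem \ref{poly} only detects reducibility through \emph{integral} roots of $p$, and $V_{\alpha,\beta}^{\prime}\cong V_{\alpha+j,\beta}^{\prime}$ for $j\in\mathbb{Z}$, so a priori $\alpha$ is only pinned down modulo $\mathbb{Z}$, whereas Theorem \ref{red} hands us an \emph{exact} value $\alpha=\tfrac14+h-h'$. One must argue that for $h=(\tfrac m2)^2$ the relevant root $-\alpha\pm\sqrt{1-\beta}=-\alpha\pm\tfrac12$ is integral, which combined with $\alpha=\tfrac14+(\tfrac m2)^2-h'$ and the requirement that $h'$ be an admissible highest weight forces $h'$ into the claimed two-element set rather than any of its integer translates; the ``$n\neq p^2$'' caveat in the $c=1$ fusion rules quoted above is exactly what rules out the spurious translates. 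I would phrase the write-up so as to lean on the example's computation for this, keeping the corollary's proof to the one line ``Directly from Theorem \ref{red} and the previous example,'' exactly as the paper does.
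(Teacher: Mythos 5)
The paper's own proof is the one-liner you anticipate --- the corollary follows from Theorem \ref{red} together with the preceding example --- but your execution of that plan contains a genuine error: you have swapped the roles of the two bottom modules. In the paper's convention an intertwining operator of type $\binom{M_{3}}{M_{1}\ M_{2}}$ is a map $M_{1}\otimes M_{2}\rightarrow M_{3}\{z\}$ applied to vectors of $M_{1}$; it is the highest weight of $M_{1}$ that produces $\beta=1-h_{1}$, and the resulting homomorphism is $V_{\alpha,\beta}^{\prime}\otimes M_{2}\rightarrow M_{3}$. For the type $\binom{L(1,h^{\prime})}{L(1,h)\ L(1,\frac{1}{4})}$ this means $M_{1}=L(1,h)$ and $M_{2}=L(1,\frac{1}{4})$, hence $\beta=1-h$, $\alpha=h+\frac{1}{4}-h^{\prime}$, and the module shown to be reducible is $V_{\alpha,1-h}^{\prime}\otimes L(1,\frac{1}{4})$ --- precisely the module analysed in the preceding example, where the level-$2$ singular vector of $V(1,\frac{1}{4})$ gives the polynomial with roots $-\alpha\pm\sqrt{1-\beta}$ and subquotients of highest weight $(\sqrt{1-\beta}\mp\frac{1}{2})^{2}=(\sqrt{h}\mp\frac{1}{2})^{2}$. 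Since the homomorphism of Theorem \ref{red} exhibits the irreducible module $L(1,h^{\prime})$ as a quotient, $h^{\prime}$ must be one of these weights, and the ``in particular'' clause is then the harmless specialization $h=(\frac{m}{2})^{2}$, $h^{\prime}=(\frac{k}{2})^{2}$ that you describe. By instead taking $M_{1}=L(1,\frac{1}{4})$, $M_{2}=L(1,h)$ you are led to $V_{\alpha,3/4}^{\prime}\otimes L(1,h)$, for which --- as you yourself notice --- Theorem \ref{poly} is unavailable because $V(1,h)$ is irreducible for generic $h$; and your fallback attributes to the example a statement it does not contain: the example concerns tensoring with $L(1,\frac{1}{4})$ and uses the singular vector of $V(1,\frac{1}{4})$, not homomorphisms out of $V_{\alpha,3/4}^{\prime}\otimes L(1,(\frac{m}{2})^{2})$. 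Even in the special case $h=(\frac{m}{2})^{2}$, where $V(1,h)$ is reducible, your route would require the integral roots of a degree-$(m+1)$ polynomial that the paper never computes.

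The ``modulo $\mathbb{Z}$'' difficulty you flag at the end also dissolves in the correct setup, and the fix you propose would not work. The subquotient attached to an integral root $n$ of $p$ has highest weight $\frac{1}{4}-\alpha-\beta-n+1$; substituting $n=-\alpha\pm\sqrt{1-\beta}$ cancels the $\alpha$-dependence and yields $(\sqrt{1-\beta}\mp\frac{1}{2})^{2}$ whatever integer translate of $\alpha$ one works with, so no appeal to the ``$n\neq p^{2}$'' caveat in the quoted $c=1$ fusion rules is needed (nor could that caveat, which merely restricts a hypothesis of those fusion rules, rule out integer translates of $h^{\prime}$). With the correct identification of $M_{1}$ and $M_{2}$, your intended one-line proof ``directly from Theorem \ref{red} and the previous example'' is exactly the paper's argument.
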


\begin{remark}
Let $\varkappa\notin\mathbb{Q}$, $c=13-6\varkappa-6\varkappa^{-1}$ and
$\Delta(k)=\frac{k(k+2)}{4\varkappa}-\frac{k}{2}$ for some $k\in\mathbb{Z}$.
Then $V(c,\Delta(k))$ is irreducible if and only if $k<0$ (see
\cite{Feigin-Fuchs2}). Furthermore, if $k_{i}\in\mathbb{N}$, $i=1,2,3$ then an
intertwining operator of type $\binom{L\left(  c,\Delta(k_{3})\right)
}{L\left(  c,\Delta(k_{1})\right)  \text{\quad}L\left(  c,\Delta
(k_{2})\right)  }$ exists if and only if $k_{1}+k_{2}+k_{3}\in2\mathbb{Z}$ and
$\left\vert k_{1}-k_{2}\right\vert \leq k_{3}\leq k_{1}+k_{2}$ (see
Proposition 2.24. in \cite{Frenkel-Zhu 2}). However, if we allow $\varkappa
=1$, then $c=1$ and $\Delta(k)=\frac{k^{2}}{4}$. In this case it seems that
these results for the intertwining operators still hold.
\end{remark}


\begin{thebibliography}{999}                                                                                              %


\bibitem[Ad1]{Adamovic1}D.\ Adamovi\'{c}, New irreducible modules for affine
Lie algebras at the critical level, International Math.\ Research Notices 6
(1996), 253-262

\bibitem[Ad2]{Adamovic2}D.\ Adamovi\'{c}, Vertex operator algebras and
irreducibility of certain modules for affine Lie algebras, Math.\ Research
Letters 4 (1997), 809-821

\bibitem[Ad3]{Adamovic3}D.\ Adamovi\'{c}, An application of $U(g)$-bimodules
to representation theory of affine Lie algebras, Algebras and Representation
theory 7 (2004), 457-469

\bibitem[As]{Astashkievich}A.\ Astashkievich, On the structure of Verma
modules over Virasoro and Neveu-Schwarz algebras, Comm.\ Math.\ Phys.\ 186
(1997), 531-562

\bibitem[CP]{Chari-Pressley}V.\ Chari, A.\ Pressley, A New Family of
Irreducible, Integrable Modules for Affine Lie Algebras, Math.\ Ann.\ (1997), 543-562

\bibitem[CGZ]{Chen-Guo-Zhao}H.\ Chen, X.\ Guo, K.\ Zhao, Tensor product weight
modules over the Virasoro algebra, preprint arXiv:1301.0526v1

\bibitem[CM]{Conley-Martin}C.\ Conley, C.\ Martin, A familiy of irreducible
representations of the Witt Lie algebra with infinite-dimensional weight
spaces, Composition Math.\ 128(2) (2001), 153-175

\bibitem[DJ]{Dong-Jiang}C.\ Dong, C.\ Jiang, A Characterization of Vertex
Operator Algebra $L(\frac{1}{2},0)\otimes L(\frac{1}{2},0)$,
Comm.\ Math.\ Phys.\ 296 (2010), 69-88

\bibitem[FF]{Feigin-Fuchs}B.\ L.\ Feigin, D.\ B.\ Fuchs, Representation of the
Virasoro algebra, Advanced Studies in Cont.\ Math.\ 7 (1990), 465-554

\bibitem[FF2]{Feigin-Fuchs2}B.\ L.\ Feigin, D.\ B.\ Fuchs, Verma modules over
the Virasoro algebra, Lecture Notes in Math., 1060, Springer-Verlag (1984), 230-245

\bibitem[FHL]{Frenkel - Huang - Lepowsky}I.\ B.\ Frenkel, Y.\ Huang, and
J.\ Lepowsky, On Axiomatic Approaches to Vertex Operator Algebras and Modules,
Mem.\ Amer.\ Math.\ Soc.\ 104 (1993)

\bibitem[FLM]{Frenkel - Lepowsky - Meurman}I.\ Frenkel, J.\ Lepowsky,
A.\ Meurman, Vertex Operator Algebras and the Monster, Pure and Appl.\ Math.,
vol.\ 134 (1988)

\bibitem[FZ]{Frenkel-Zhu}I.\ B.\ Frenkel, Y.\ Zhu, Vertex operator algebras
associated to representations of affine and Virasoro algebras, Duke
Math.\ J.\ 66, 1 (1992), 123-168

\bibitem[FZ2]{Frenkel-Zhu 2}I.\ B.\ Frenkel, M.\ Zhu, Vertex operator algebras
associated to modified regular representations of the Virasoro algebra,
advances in Math., 229 (6) (2012), 34688-3507

\bibitem[IK]{Iohara-Koga}K.\ Iohara, Y.\ Koga, Representation theory of the
Virasoro algebra, Springer-Verlag, London Limited (2011)

\bibitem[KR]{Kac-Raina}V.\ Kac and A.\ Raina, Bombay Lectures on highest
weight representations of infinite - dimensional Lie algebras, Advanced Series
in Math Physics 2 (1987)

\bibitem[L]{Lin}X.\ Lin, Fusion rules of Virasoro vertex operator algebras,
preprint arXiv:1204.4855v1

\bibitem[LLZ]{Liu-Lu-Zhao}G.\ Liu, R.\ Lu, K.\ Zhao, A class of simple weight
Virasoro modules, preprint, arXiv:1211.0998

\bibitem[LL]{Lepowsky-Li}J.\ Lepowsky, H.\ Li, Introduction to Vertex Operator
Algebras and Their Representations, Birkh\"{a}user, Progress in Mathematics
227 (2004)

\bibitem[LZ]{Lu-Zhao}R.\ Lu, K.\ Zhao, A family of simple weight modules over
the Virasoro algebra, preprint arXive:1303.0702

\bibitem[M]{Mathieu}O.\ Mathieu, Classification of Harish-Chandra modules over
the Virasoro Lie algebra, Invent.\ Math.\ 107 (1992), 225-234, arXiv:0801.2601v1

\bibitem[Maz]{Mazorchuk}V.\ Mazorchuk, On simple mixed modules over the
Virasoro algebra, Mat.\ Stud.\ 22 (2) (2004), 121--128

\bibitem[Mi]{Milas}A.\ Milas, Fusion rings for degenerate minimal models,
J.\ of Algebra 254 (2002), 300-335

\bibitem[MZ1]{Mazorchuk-Zhao1}V.\ Mazorchuk and K.\ Zhao, Classification of
simple weight Virasoro modules with finite-dimensional weight spaces, Journal
of Algebra 307\textbf{\ }(2007), 209-214

\bibitem[MZ2]{Mazorchuk-Zhao2}V.\ Mazorchuk and K.\ Zhao, Simple Virasoro
modules which are locally finite over a positive part, preprint arXiv:1205.5937

\bibitem[R1]{Radobolja1}G.\ Radobolja, An application of vertex algebras to
the structure theory of certain representations of infinite-dimensional Lie
algebras of Virasoro type, Ph.D.\ dissertation (in Croatian), University of
Zagreb, September 2012. http://bib.irb.hr/prikazi-rad?\&rad=605718

\bibitem[R2]{Radobolja2}G.\ Radobolja, Subsingular vectors in Verma modules,
and tensor product modules over the twisted Heisenberg-Virasoro algebra and
$W(2,2)$ algebra, preprint arXiv:1302.0801

\bibitem[W]{Wang}W.\ Wang, Rationality of Virasoro vertex operator algebras,
International Mathematics Research Notices, 7 (1993), 197-211

\bibitem[Zh]{Zhang}H.\ Zhang, A class of representations over the Virasoro
Algebra, J. Algebra 190 (1997), 1-10
\end{thebibliography}
\end{document}